\documentclass[12pt]{article}
\usepackage{amsmath,amsthm,amsfonts,amssymb,graphicx,epsfig,latexsym,mathrsfs,mathdots,subcaption}
\usepackage[dvipsnames]{xcolor}
\usepackage{epsf,enumerate} 

\usepackage{tikz}
\usetikzlibrary{positioning,shapes.geometric,decorations.markings}

\newtheorem{thm}{Theorem}
\newtheorem{lemma}[thm]{Lemma}
\newtheorem{cor}[thm]{Corollary}

{}

\newtheorem{remark}[thm]{Remark}

\newtheorem*{definition*}{Definition}
\newtheorem*{remark*}{Remark}

\newtheorem{conjecture}[thm]{Conjecture}
\newtheorem{claim}[thm]{Claim}

\newtheorem{lem}[thm]{Lemma}

\newtheorem{definition}[thm]{Definition}

\usepackage[margin=1in]{geometry}
\usepackage[linktocpage=true]{hyperref}
\usepackage{setspace}

\def\ite#1{\hfill\break${}$\hbox to 50pt {\quad(#1)\hfill}}

\renewcommand{\phi}{\varphi}

\renewcommand{\int}{\text{int}}
\newcommand{\ext}{\text{ext}}
\newcommand{\dist}{\text{dist}}

\tikzset{ invisnode/.style={circle, draw=white, fill=white, inner sep=0.07cm},
triangle/.style={regular polygon, regular polygon sides=3, draw, fill=white, inner sep=0.07cm},
square/.style ={regular polygon, regular polygon sides=4, draw, fill=white, inner sep=0.09cm},
bdot/.style ={inner sep=1pt, minimum size=7pt, circle,draw,fill},
wdot/.style ={inner sep=1pt, minimum size=7pt, circle, fill =white, draw}}

\title{List coloring $C_3$-free planar graphs with a sparse matching of restricted lists}
\author{{Stephen G. Hartke}\thanks{Department of Mathematical and Statistical Sciences, University of Colorado Denver, USA. \texttt{stephen.hartke@ucdenver.edu}}
\and{Yupei Li}\thanks{Department of Mathematics, University of South Carolina, USA. \texttt{yupei@email.sc.edu}}\and{Joseph Pappe}\thanks{Department of Mathematics, University of Virginia, USA. \texttt{yyz8xu@virginia.edu}}\and{Fares Soufan\thanks{Department of Mathematics, University of Nebraska-Lincoln, USA. \texttt{fsoufan2@huskers.unl.edu} }}\and{Lin Tian\thanks{Department of Mathmatical Sciences, Middle Tennessee State University, USA. \texttt{lt5b@mtmail.mtsu.edu}}}\and {Zimu Xiang}\thanks{Department of Mathematics, University of Illinois Urbana-Champaign, USA. \texttt{zimux2@illinois.edu}}}
\date{}

\begin{document}

\maketitle
\begin{abstract}
A graph $G$ is $k$-choosable if it has a proper coloring for every $k$-list assignment. While every $C_3$-free planar graph is $4$-choosable, some of them are not $3$-choosable, as constructed by Voigt. Hu and Zhu conjectured that if $G$ is a $C_3$-free planar graph and $X \subseteq V(G)$ induces a bipartite subgraph, then $G$ has a proper $L$-coloring whenever $|L(x)| = 3$ for $x \in X$ and $|L(v)| = 4$ for $v \in V(G) \setminus X$. As evidence, they proved the conjecture when $X$ is an independent set. We provide further evidence by proving the conjecture when the induced subgraph $G[X]$ is an induced sparse matching.
This is the first result supporting the conjecture in which the set $X$ receiving smaller lists may induce a subgraph with edges.
\end{abstract}

\section{Introduction}
Given a graph $G$, a \textit{proper $k$-coloring} of $G$ is a function $f\colon V(G)\rightarrow \{1,2,\dots,k\}$ such that for every $uv\in E(G)$, we have that $f(u)\neq f(v)$.
By a greedy argument, every graph $G$ has a proper $\Delta(G)+1$ coloring, where $\Delta(G)$ is the maximum degree of $G$.
A general theme of graph coloring problems is to color sparse graphs with as few colors as possible.
The most famous result along this line is the Four Color Theorem by Appel and Haken~\cite{appel_haken}, who showed using computer assistance that every planar graph has a proper $4$-coloring.
For planar graphs without $C_3$, Gr\"otzsch~\cite{grotzsch} showed that every such graph has a proper $3$-coloring.

List coloring is a generalization of proper coloring proposed independently by Vizing~\cite{vizing} and Erd\H{o}s, Rubin and Taylor~\cite{erdos_rubin_taylor}.
Given a graph $G$, a \textit{list assignment} $L:V(G)\rightarrow2^{\mathbb{N}}$ of $G$ is a function that assigns a list $L(v)$ for every $v\in V(G)$.
Given a list assignment $L$ of $G$, a \textit{proper $L$-coloring} of $G$ is a function $f:V(G)\rightarrow \mathbb{N}$ such that for every vertex $v\in V(G)$, $f(v)\in L(v)$ and for every edge $uv\in E(G)$, $f(u)\neq f(v)$.
A \textit{$k$-list assignment} of $G$ is a list assignment $L$ with $|L(v)|=k$ for every $v\in V(G)$.
Note that if $L(v)=\{1,2,\dots,k\}$ for every vertex $v$, then a proper $L$-coloring is a proper $k$-coloring of $G$.
If a graph $G$ has a proper coloring for every $k$-list assignment, then $G$ is \textit{$k$-choosable}.

Although every planar graph has a proper $4$-coloring, there are planar graphs that are not $4$-choosable, witnessed by certain $4$-list assignments, see for example~\cite{mirzakhani,voigt_wirth}.
Thus, it is natural to ask whether all planar graphs are $5$-choosable.
Thomassen~\cite{thomassen_every_1994} answered this question in the affirmative.
Furthermore, it is worth mentioning that the proof yielded a stronger statement: instead of assigning every vertex a list of size $5$, we may assign smaller lists to boundary vertices of a plane graph, which facilitates the inductive proof.

For $C_3$-free planar graphs, Voigt~\cite{voigt_not_1995} constructed a graph that is not $3$-choosable, and it was observed by Kratochv\'il and Tuza~\cite{kratochvil_tuza} that every such graph is $4$-choosable.
To understand how far a $C_3$-free planar graph can be from being $3$-choosable, Hu and Zhu~\cite{hu_list_2020} proposed the following conjecture.

\begin{conjecture}\label{conj:huzhu}
       Suppose that for a $C_3$-free planar graph  $G$, $X$ is a vertex subset such that $G[X]$ is bipartite. Let $L$ be a list assignment such that $|L(x)|=3$ for every $x\in X$ while $|L(v)|=4$ for every $v\in V(G)\setminus X$.
        Then $G$ has a proper $L$-coloring.
\end{conjecture}

They proved the following theorem as evidence for Conjecture~\ref{conj:huzhu}.

\begin{thm}[Hu and Zhu~\cite{hu_list_2020}\footnote{A gap in the proof was found in the published version.  Hu and Zhu posted a revised version at \url{https://arxiv.org/abs/1910.12480}}]
\label{thm:huzhu}
    Suppose that for a $C_3$-free planar graph  $G$, $X$ is a vertex subset such that $G[X]$ is independent. Let $L$ be a list assignment such that $|L(x)|=3$ for every $x\in X$ while $|L(v)|=4$ for every $v\in V(G)\setminus X$.
        Then $G$ has a proper $L$-coloring.
\end{thm}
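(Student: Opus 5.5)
We prove Theorem~\ref{thm:huzhu} by a Thomassen-style induction on a stronger, boundary-flexible statement. Let $G$ be a connected $C_3$-free plane graph with outer face boundary walk $C$. Let $X\subseteq V(G)$ be independent. Let $L$ be a list assignment in which:
\begin{itemize}
\item interior vertices in $X$ get $3$ colors and interior vertices not in $X$ get $4$ colors;
\item a precolored path $P$ on the outer face, with at most two vertices (or a short path of bounded length), gets lists of size $1$ forming a proper coloring;
\item the remaining outer vertices get lists of size $3$, or of size $2$ if they lie in $X$.
\end{itemize}
This is essentially the Thomassen-type generalization of the $4$-choosability of triangle-free planar graphs, with every list shrunk by one on $X$. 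We also require that no vertex of size-$2$ list is adjacent to $P$, or a similar separation condition, and that $X$ stays independent. The theorem then follows by precoloring a single outer vertex.

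\textbf{Step 1: minimal counterexample.} Take a minimal counterexample to the strengthened statement. The standard reductions apply.
\begin{itemize}
\item $G$ is $2$-connected, since otherwise we split at a cut vertex and color the block containing $P$ first.
\item $C$ has no chord. A chord splits $G$ into two pieces; we color the piece containing $P$ by induction. The chord's endpoints then become the new precolored path of the other piece.
\item There are no separating short cycles of length $4$ or $5$ whose interior is nonempty. Otherwise we color the exterior and extend inside, since a precolored $4$- or $5$-cycle in a triangle-free graph with $3$-lists extends. Here a $3$-list vertex in $X$ inside is the danger, and we must check this carefully.
\end{itemize}

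\textbf{Step 2: the main reduction.} Take a vertex $v$ on $C$ adjacent to an endpoint of $P$, with list size at least $2$. If $|L(v)|\ge 3$, pick a color $c\in L(v)$ not used on its precolored neighbor, plus one spare color. Delete $v$ and remove $c$ from the lists of its neighbors. Its interior neighbors come onto the new outer face, so their lists drop from $4$ to $3$, or from $3$ to $2$ when in $X$. This stays within the hypothesis, and triangle-freeness prevents two new outer vertices from being adjacent to each other through $v$. If $v\in X$ has a $2$-list, use instead that its neighbors are not in $X$, by independence. So they have slack: a neighbor moved to the outer face drops from $4$ to $3$, which is allowed. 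We also have to keep track of the number of size-$2$ outer vertices adjacent to a common vertex.

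\textbf{Main obstacle.} The delicate point is an outer vertex that ends up with a $2$-list adjacent to another outer $2$-list vertex, or adjacent to both ends of $P$. I expect this to be exactly the gap noted in the footnote. My first approach is to forbid, in the hypothesis, adjacent $2$-list outer vertices and any $2$-list vertex adjacent to $P$. Then in the reduction step I would delete two consecutive boundary vertices $v,w$ simultaneously, choosing colors with a counting argument so that new $2$-list vertices are not adjacent. This is possible since $X$ is independent and $G$ is $C_3$-free. If that fails, I would allow $P$ to have three vertices and handle the extra configurations, such as $4$-faces meeting $P$, by case analysis.
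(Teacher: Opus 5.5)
Your overall strategy (a Thomassen-style strengthening with a precolored boundary path and boundary lists one smaller, attacked through a minimal counterexample) is the same broad route as the paper. The paper obtains this theorem as the case of Theorem~\ref{thm:main} where $G[X]$ has no edges, and proves that via Theorem~\ref{thm:valid_target}. The gap is that you never fix an inductive hypothesis that your reductions actually preserve, and that is where all the difficulty lies.

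In Step 2, coloring $v$ with $c$ also deletes $c$ from the other boundary neighbor $u$ of $v$. If $u\notin X$ and $c\in L(u)$, then $u$ becomes a $2$-list vertex outside $X$. Choosing $c$ carefully cannot always prevent this, because $L(v)$ minus the color of $p_1$ may be contained in $L(u)$. Now $u$ may be adjacent to a boundary vertex of $X$ (already a $2$-list), or to an interior vertex of $X$ that later surfaces with a $2$-list. Either way you get adjacent $2$-lists, and the invariant ``$2$-lists exactly on the independent set $X$'' collapses. Likewise, an interior neighbor $w\in X$ of $v$ on a $4$-cycle $p_1vwp_2$ surfaces as a $2$-list vertex adjacent to $P$, which violates your condition on $P$.

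Step 1 has the same defect. After a chord $xy$, the second piece receives the precolored edge $xy$, whose ends may have $2$-list boundary neighbors there. The separating $4$-/$5$-cycle reduction needs the whole cycle precolored, i.e.\ a precolored path of four or five vertices, which $|P|\le 2$ does not allow. Your justification there, that a precolored $4$- or $5$-cycle extends with $3$-lists, is false in general, since triangle-free planar graphs need not be $3$-choosable. With the actual lists it is an instance of the statement itself with a $5$-vertex precolored path. In that instance, an interior vertex of $X$ with two neighbors on the cycle is precisely the obstruction that must be excluded.

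The missing idea is a hypothesis that is closed under these operations, together with a proof that the reductions stay inside it. The paper does this as follows:
\begin{itemize}
\item It allows $P$ up to five vertices.
\item After splitting along a chord, it absorbs the $2$-list boundary neighbors of the chord's ends into the new precolored path $P_W$.
\item It forbids exactly the configurations that block extension: bad vertices, bad edges (adjacent $2$-lists), and the bad $4$-cycles. Worse and worst cycles cannot occur when $X$ is independent.
\end{itemize}
The chord lemmas culminating in Lemma~\ref{lem:chords}, and the peeling cases of Section~\ref{sec3}, then show that peeling a few boundary vertices next to an end of $P$ yields a smaller valid target unless an end configuration occurs. Every end configuration needs an edge of $G[X]$, so none arises for independent $X$.

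That case analysis is what resolves your ``adjacent $2$-lists'' obstacle. It colors $v_1$ outside $L(v_2)\cup L(p_1)$ when possible. Otherwise it peels $v_1,v_2$ together, identifies $v_1$ with $v_3$, or gives $v_1,v_3$ a common color from $L(v_2)\setminus L(v_4)$. Bad vertices like your $w$ are handled by precoloring them and shifting the precolored path. Your ``delete two consecutive boundary vertices and count'' is asserted without argument. Your fallback of enlarging $P$ and doing case analysis is exactly the part that constitutes the proof. As written, the proposal is a plan rather than a proof.
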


For vertices $x,y\in V(G)$, denote by $\dist_G(x,y)$ the distance between $x$ and $y$ in $G$.
The main goal of this paper is to provide further evidence for Conjecture~\ref{conj:huzhu}.
In particular, the result is the first evidence in which the vertex subset $X$ receiving smaller lists induces a subgraph with edges.
To formally state our result, we need the following definition.

\begin{definition}
    For a graph $G$, a set $A\subseteq V(G)$ is \textbf{$k$-sparse} if each component of $G[A]$ is isomorphic to $K_1$ or $K_2$, and for disjoint components $A_1,A_2$ of $G[A]$ that are not both isomorphic to $K_1$ and vertices $v_1\in A_1,v_2\in A_2$, we have that $\dist_G(v_1,v_2)\ge k$.
\end{definition}

\begin{thm}\label{thm:main}
    Suppose that for a $C_3$-free planar graph  $G$, $X$ is a vertex subset such that $G[X]$ is $15$-sparse. Let $L$ be a list assignment such that $|L(x)|=3$ for every $x\in X$ while $|L(v)|=4$ for every $v\in V(G)\setminus X$.
        Then $G$ has a proper $L$-coloring.
\end{thm}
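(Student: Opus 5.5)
We plan a Thomassen-style inductive argument in the spirit of the proof of Theorem~\ref{thm:huzhu}: rather than proving Theorem~\ref{thm:main} directly, we prove a stronger statement about near-triangulated plane graphs with a precolored boundary path. Let $G$ be a connected plane $C_3$-free graph with outer face boundary $C$, and let $P$ be a path of at most two or three consecutive vertices of $C$ that is precolored. Vertices of $C\setminus P$ get lists of size at least $3$, and interior vertices get lists of size $4$, except interior vertices of $X$, which get size $3$. The sparsity hypothesis is strengthened to govern how short lists may approach $C$. Concretely, an edge of $G[X]$ (a ``bad pair'') must be at distance at least some constant from $C$, and from every other bad pair. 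Isolated vertices of $X$ are allowed anywhere, as in Hu and Zhu. Theorem~\ref{thm:main} follows by taking $P$ to be a single vertex whose color is chosen from its list, after reducing to the case where the outer face contains no vertex of a bad pair; we get this by choosing an appropriate face, or by noting that sparsity lets us pick an outer face away from all edges of $G[X]$.

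The induction follows the standard reductions for $C_3$-free planar graphs.
\begin{enumerate}[(1)]
\item Chords of $C$ and separating $4$-cycles are handled by splitting the graph, coloring the side containing $P$ first, and then recursing on the other side with the chord or the short separating cycle precolored.
\item For a separating $5$-cycle, the same splitting works, using the extension lemma that a precoloring of a $5$-cycle extends unless a single interior vertex sees all five colors. With lists of size $3$ at interior $X$-vertices this needs a small patch, which the distance conditions should make available.
\item In the chordless case, we pick a vertex $v$ on $C$ adjacent to the end of $P$, color it, and delete it. We remove its color from the lists of its neighbors, which then join the new outer face.
\end{enumerate}
The invariant to maintain is that every vertex newly on the outer face keeps a list of size at least $3$. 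An interior vertex with a list of size $4$ drops to $3$, which is fine. An interior $X$-vertex with a list of size $3$ would drop to $2$. Hu and Zhu avoid this because two $X$-vertices are never adjacent, so in the deletion one can ensure that their colors are not removed, or one can use a vertex of a generalized ``$2$-list'' type on the boundary. We adopt their extended hypothesis allowing a controlled number of boundary vertices with lists of size $2$, subject to conditions such as no two adjacent and none too close to $P$.

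The main obstacle is the case where a bad pair $xy$, with both $x,y\in X$ and adjacent, comes near the outer face during the induction. When $v$ is deleted and $x$ becomes a boundary vertex with a $2$-list, its neighbor $y$ still has only $3$ colors. Later deletions could reduce $y$ to $2$ as well, giving adjacent $2$-list vertices, which the hypothesis forbids.

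To handle this, we would treat each bad pair locally before the induction reaches it. For a bad pair $xy$ with $N(x)\cup N(y)$ in the interior, we consider contracting or identifying structures within its radius-$7$ ball. The $15$-sparsity guarantees that these balls are disjoint and far from one another, and far from $P$. Alternatively, and this is what we would try first, we allow the boundary to carry at most one ``special'' $2$-list vertex that belongs to a bad pair. Whenever the bad pair is exposed, we immediately color $x$ or $y$ so as to use a color leaving its partner and their common neighbourhood with lists of size at least $3$. Such a color exists because $|L(x)|=3$, $|L(y)|=3$, and the triangle-free condition means $x$ and $y$ have no common neighbors.

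The distance bound $15$ should then come from requiring that two exposed bad pairs never interact within the few steps of the reduction, including separating $5$-cycle arguments, which span distance up to roughly $7$ on each side. Checking that every reduction configuration preserves these distance invariants is the tedious core of the proof.
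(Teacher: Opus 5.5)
Your proposal has a genuine gap exactly where this theorem goes beyond Hu--Zhu. The key step, ``whenever the bad pair is exposed, immediately color $x$ or $y$ so as to leave its partner with a list of size at least $3$,'' does not work. Coloring $x$ with $c$ deletes $c$ from $L(y)$ whenever $c\in L(y)$. Nothing prevents $L(x)=L(y)$, or, once $x$ is exposed with a $2$-list $L'(x)$, $L'(x)\subseteq L(y)$. In that case every choice drops $y$ to two colors. Triangle-freeness only says $x$ and $y$ have no common neighbours; it gives no control over their lists.

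With that step gone, the proposal has no working mechanism for the obstruction the theorem is about. ``Contract or identify within a radius-$7$ ball'' is not a reduction. The induction hypothesis is also never fixed: you do not say which configurations of $2$-lists near an edge of $G[X]$ are forbidden. It is also unclear that a version with $|P|\le 3$ is self-reducing. In the paper, splitting along $2$- and $3$-chords through interior vertices leaves precolored paths of up to five vertices on the far side, which is why $|P|\le 5$ is built into the target. A minor point: in a triangle-free graph an interior vertex sees at most two vertices of a $5$-cycle, so the extension lemma you quote is not the relevant one. The real issue is an interior $X$-vertex adjacent to two cycle vertices.

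The missing idea is global rather than local: the damage a bad pair can do is built into the hypothesis, and one shows it can be avoided from at least one side of $P$.
\begin{enumerate}
\item The paper's target allows a precolored path of up to five vertices and boundary lists of size $2$. It forbids bad vertices, bad edges, and bad, worse and worst $4$-cycles. These are precisely the configurations that arise when an edge of $G[X]$ reaches the boundary.
\item In a minimal counterexample, a long sequence of chord lemmas (using Lemma~\ref{lem:badvertex} and some Alon--Tarsi orientations for small residual graphs) shows there are no chords.
\item One then peels a few vertices at $p_1$ (Cases 1--4). If the reduced target is invalid, the obstruction is an end configuration containing an edge $wz$ of $G[X]$ within distance $5$ of $P$.
\item Peeling at $p_k$ likewise yields such an edge. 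Since $|P|\le 5$, two distinct $K_2$-components arising this way would be at distance at most $14$. So $15$-sparsity forces both end configurations to use the same edge $wz$.
\item The final lemmas show that two end configurations on a common edge cannot coexist. This is done by splitting $G$ along the combined structure and coloring the pieces in order.
\end{enumerate}
So the constant $15$ makes the nearby bad pair unique; it does not keep balls disjoint for local surgery. That two-sided uniqueness argument is what your plan lacks.
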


    As the definition of $k$-sparsity does not impose a distance condition between two $K_1$ components beyond being independent in $G$,  Theorem~\ref{thm:main} implies Hu and Zhu's Theorem~\ref{thm:huzhu}.
    
    To prove Theorem~\ref{thm:main}, we prove a stronger statement involving assigning smaller lists to the boundary vertices of $G$.  To state this theorem, we need the following definitions.

\begin{definition}
    A \textbf{target} $(G,P,L)$ is a triple such that $G$ is a $C_3$-free planar graph with boundary $B$, $P$ a path consisting of at most five vertices of $B$, and $L$ a list assignment such that the following holds:
    \begin{itemize}
        \item $3\le |L(v)|\le 4$ for $v\in V(G)\setminus V(B)$,
        \item  The set $X_{G,L}=\{x\in V(G)\setminus V(B):|L(x)|=3\}$ is $15$-sparse, 
        \item $2\le |L(v)|\le 4$ for $v\in V(B)\setminus V(P)$,
        \item  $|L(v)|=1$ for $v\in V(P)$, and if $uv\in E(G)\setminus E(P)$ for $u,v \in V(P)$, then $L(u)\neq L(v)$.
    \end{itemize}
\end{definition}

Given two boundary vertices $x,y$, we say that $x$ and $y$ are \textit{consecutive} boundary vertices if $xy\in E(B)$; otherwise $xy\in E(G)\setminus E(B)$ and we may refer to the edge $xy$ as a \textit{chord} of $G$.

\begin{definition}
    Assume that $(G,P,L)$ is a target.
    \begin{itemize}
    \item A vertex $u$ is a \textbf{bad vertex} if either $|P|=5$ and $|N_G(u)\cap P|\ge|L(u)|-1$ or $3\le|P|\le 4$, $|L(u)|=2$ and $u$ is adjacent to an end  vertex of $P$. 
    \item An edge $xy$ is a \textbf{bad edge} if $|L(x)|=|L(y)|=2$.
    \item A $4$-cycle $C=xyzw$ is a \textbf{bad cycle} if $w$ is an interior vertex, $x,y,z$ are boundary vertices,
    $|L(w)|=|L(x)|=|L(z)|=3$ and $|L(y)|=2$. 
    \item A $4$-cycle $C=xyzw$ is a \textbf{worse cycle} if $z,w$ are interior vertices, $x,y$ are boundary vertices, and $|L(w)|=|L(x)|=|L(y)|=|L(z)|=3$.
    \item A $4$-cycle $C=xyzw$ is a \textbf{worst cycle} if $z,w$ are interior vertices, $x,y$ are boundary vertices, $|L(x)|=2$, and $|L(y)|=|L(w)|=|L(z)|=3$.
    \end{itemize}
    
    A target $(G,P,L)$ is \textbf{valid} if there is no bad edge, no bad vertex, and no bad, worse, or worst $4$-cycle.
\end{definition}

\begin{remark} \label{rmk:p2}
    If $P$ consists of at most two precolored vertices and $G$ has a vertex $v\in V(B)$ consecutive to $P$ with $|L(v)|=2$, then we assume that $v$ is precolored and contained in $P$ as well.
    If $P$ consists of three precolored vertices, then we may include at most one such $v\in V(B)$ in $P$, to keep $|P|\le 4$.
\end{remark}

\begin{figure}[!htbp]
\centering
\begin{tikzpicture}[scale = .8]
\node[invisnode] at (-.5,0) (a) {};
\node[bdot, label={below:$p_1$}] at (0,0) (p1) {};
\node[bdot, label={below:$p_2$}] at (1,0) (p2) {};
\node[bdot, label={below:$p_3$}] at (2,0) (p3) {};
\node[bdot, label={below:$p_4$}] at (3,0) (p4) {};
\node[bdot, label={below:$p_5$}] at (4,0) (p5) {};
\node[wdot, label={below:$v$}] at (5,0) (v) {};
\node[triangle, label={right:$u$}] at (1,0.75) (u) {};
\node[square, label={above:$w$}] at (1.5, 2) (w) {};
\node[invisnode] at (5.5,0) (b) {};
\draw[black] (a)--(p1)--(p2)--(p3)--(p4)--(p5)--(v)--(b);
\draw[black] (p1)--(u)--(p3);
\draw[black] (p1)--(w)--(p3);
\draw[black] (w)--(p5);
\node[label={below:(a)}] at (2.5,-.4) {};
\end{tikzpicture}
\hspace{.35cm}
\begin{tikzpicture}[scale = .8]
\node[invisnode] at (-.5,0) (a) {};
\node[wdot, label={below:$x$}] at (0,0) (x) {};
\node[wdot, label={below:$y$}] at (1,0) (y) {};
\node[invisnode] at (1.5,0) (b) {};

\draw[black] (a)--(x)--(y)--(b);
\node[label={below:(b)}] at (.5,-.4) {};
\end{tikzpicture}
\hspace{.35cm}
\begin{tikzpicture}[scale = .8]
\node[invisnode] at (-.5,0) (a) {};
\node[triangle, label={below:$x$}] at (0,0) (x) {};
\node[wdot, label={below:$y$}] at (1,0) (y) {};
\node[triangle, label={below:$z$}] at (2,0) (z) {};
\node[triangle, label={above:$w$}] at (1,1) (w) {};
\node[invisnode] at (2.5,0) (b) {};

\draw[black] (a)--(x)--(y)--(z)--(b);
\draw[black] (x)--(w)--(z);
\node[label={below:(c)}] at (1,-.4) {};
\end{tikzpicture}\hspace{.35cm}
\begin{tikzpicture}[scale = .8]
\node[invisnode] at (-.5,0) (a) {};
\node[triangle, label={below:$x$}] at (0,0) (x) {};
\node[triangle, label={below:$y$}] at (1,0) (y) {};
\node[triangle, label={above:$z$}] at (1,1) (z) {};
\node[triangle, label={above:$w$}] at (0,1) (w) {};
\node[invisnode] at (1.5,0) (b) {};

\draw[black] (a)--(x)--(y)--(b);
\draw[black] (x)--(w)--(z)--(y);
\node[label={below:(d)}] at (.5,-.4) {};
\end{tikzpicture}
\hspace{.35cm}
\begin{tikzpicture}[scale = .8]
\node[invisnode] at (-.5,0) (a) {};
\node[wdot, label={below:$x$}] at (0,0) (x) {};
\node[triangle, label={below:$y$}] at (1,0) (y) {};
\node[triangle, label={above:$z$}] at (1,1) (z) {};
\node[triangle, label={above:$w$}] at (0,1) (w) {};
\node[invisnode] at (1.5,0) (b) {};

\draw[black] (a)--(x)--(y)--(b);
\draw[black] (x)--(w)--(z)--(y);
\node[label={below:(e)}] at (.5,-.4) {};
\end{tikzpicture}
\caption{Illustrations of bad subgraphs. Note that other than vertices in $P$, the boundary vertices $x,y,z$ need not be consecutive on the boundary.}
\end{figure}

Instead of proving Theorem~\ref{thm:main}, we will prove the following, which implies our main result.

\begin{thm}\label{thm:valid_target}
    If $(G,P,L)$ is a valid target, then $G-E(P)$ is $L$-colorable.
\end{thm}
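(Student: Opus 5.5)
We argue by minimal counterexample, in the style of Thomassen's proof of 5-choosability and of Hu and Zhu. Let $(G,P,L)$ be a valid target such that $G-E(P)$ is not $L$-colorable, chosen with $|V(G)|+|E(G)|$ minimum and, subject to that, with $\sum_v |L(v)|$ minimum. We derive a sequence of structural properties and then reach a contradiction by one of a few reductions. Each reduction deletes or precolors a small set of vertices and shrinks lists of their neighbours. In each case we must check that the resulting target is again valid, so that minimality gives a coloring that extends.

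\emph{Step 1: basic reductions.}
\begin{enumerate}[(i)]
\item $G$ is connected. Taking $B$ as the outer face, $G$ is $2$-connected: otherwise we split at a cut vertex, color the block containing $P$ first, and then treat the cut vertex as a one-vertex path $P$ in the other block.
\item $B$ is a cycle with no \emph{chord}. If $xy$ is a chord, it splits $G$ into $G_1\supseteq P$ and $G_2$. We color $G_1$ by minimality and then color $G_2$ with $P'=xy$. Here $|P'|=2$, so by Remark~\ref{rmk:p2} any $2$-list neighbours get absorbed, and we must verify that no bad vertex or bad edge arises. Whenever the chord touches $P$, a careful choice of which side receives $P$ is needed.
\item There is no separating $4$-cycle. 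Such a cycle bounds a disk; we color the outside, and the inside then has a precolored $4$-cycle as outer face with interior lists of size $\ge 3$. This follows from the Hu–Zhu/Thomassen-type fact that a $4$-face precolored boundary extends in $C_3$-free planar graphs when the interior lists satisfy our hypotheses. We justify it as the special case $|P|=4$ of the induction, after removing one edge of the cycle.
\item Separating $5$- and $6$-cycles are handled similarly, using $|P|\le 5$.
\end{enumerate}
These steps show that $G$ is a near-triangulation-like $C_3$-free graph whose outer cycle has no short chord paths of length $2$ or $3$ separating things.

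\emph{Step 2: choose a reducible configuration on $B$ near $P$.} Let $p$ be an end of $P$ and $v$ its neighbour on $B-P$. Following Thomassen, we color $v$ with a color of $L(v)\setminus L(p)$ and delete it, removing that color from the lists of its neighbours.
\begin{itemize}
\item If $|L(v)|=3$, we instead reserve two colors of $L(v)$ and remove both from the interior neighbours of $v$. Since $G$ is $C_3$-free, these neighbours form an independent set, so their lists drop to size $\ge 2$ ($\ge 1$ for vertices of $X$). This is why triangle-freeness and $4$-lists give slack.
\item If $|P|<5$, we simply extend $P$ by $v$.
\item If $|P|=5$, we shorten $P$ by deleting an end and treat it as an ordinary boundary vertex.
\end{itemize}
The definitions of bad vertex, bad edge, and bad/worse/worst $4$-cycles are exactly the obstructions to these operations. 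So in each case we check that a new bad configuration would force one in $G$, or a separating short cycle, or a chord, contradicting Step 1.

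\emph{Step 3: interior $3$-list vertices.} When a boundary vertex is deleted, an interior neighbour $x\in X_{G,L}$ with $|L(x)|=3$ becomes a boundary vertex with list size possibly $2$. Its partner $y$ in a $K_2$ component of $G[X]$ would then create a bad edge or a worst cycle. Here $15$-sparsity is used: any two $K_2$ components are far apart, so at most one such component lies within the bounded-radius region around $P$ affected by a reduction. In that case we color or delete the whole $K_2$ together with $v$, precoloring it so as to keep lists of size $\ge 2$.

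The main obstacle is Step 3 combined with validity maintenance: showing that after each reduction no worse or worst $4$-cycle appears. A worst cycle is formed by two adjacent interior $3$-vertices, which is precisely a matching edge of $X$. To handle it, I would first try a separate reduction for an edge $xy$ of $G[X]$ lying on a $4$-cycle with two boundary vertices: color $x,y$ and the $2$-list boundary vertex together, using that $G[\{x,y,\text{boundary}\}]$ has few colorings to fix. The distance-$15$ condition ensures this local recoloring cannot interact with $P$ and with another matching edge simultaneously.
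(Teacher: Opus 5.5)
The overall skeleton (minimal counterexample, peeling near an end of $P$, and sparsity so that only one matching edge of $X_{G,L}$ is nearby) matches the paper. However, the steps that carry the argument either fail as stated or are missing.

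\textbf{Your reductions do not produce targets.} The Thomassen two-color reservation in Step~2 leaves an interior neighbour of $v$ in $X_{G,L}$ with a $1$-list, which a target allows only on $P$. Moreover, a $4$-list common neighbour of $v_1$ and $p_2$ that drops to a $2$-list is already a bad vertex when $|P|=5$. Here you can afford to remove only one color from each neighbour. That is why the paper removes single colors chosen to avoid neighbouring lists (e.g.\ $\alpha\in L(v_1)\setminus(L(v_2)\cup L(p_1))$). It then splits into Cases 1--4 according to $|L(v_2)|$, whether $L(v_1)\subseteq L(v_2)\cup L(p_1)$, whether $L(v_2)\subseteq L(v_3)$, and $|L(v_4)|$.

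\textbf{Step~1 claims too much.} Separating $6$-cycles cannot be excluded. If the interior is a single $x\in X_{G,L}$ adjacent to $v_1,v_3,v_5$, coloring the outside first may put all three colors of $L(x)$ on $v_1,v_3,v_5$, and a $6$-cycle cannot serve as $P$. One only gets the structural description of Lemma~\ref{lem:sep6}, which must then be carried through. Likewise, a chord $p_3u$ with $|P|=5$ is not removed by a plain split. One side then receives the five-vertex precolored path $p_1p_2p_3uu^+$ (when $|L(u^+)|=2$) and may contain a bad vertex. The paper needs the whole chain of lemmas on feasible $2$- and $3$-chords, including explicit Alon--Tarsi orientations for several leftover graphs, to reach Lemma~\ref{lem:chords}.

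\textbf{The decisive idea is missing.} Your Step~3 (``color or delete the whole $K_2$ together with $v$'') comes with no argument that the result is a valid target, and it need not be one. Take the situation of Figure~\ref{fig:case1}: $|L(v_1)|=4$, $|L(v_2)|=3$, $w\sim v_1$, $z\sim v_2$. Coloring $v_1,w,z$ can remove two colors from $v_2$. If $v_2$ loses even one while $|L(v_3)|=2$, then $v_2v_3$ is a bad edge, so the fix must keep propagating along $B$ with no evident end.

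The paper's route is two-sided:
\begin{itemize}
\item Peeling at the $p_1$ end either gives a smaller valid target, or stops in one of finitely many end configurations. In each, a matching edge $wz$ of $G[X_{G,L}]$ produces a bad edge or worse cycle within distance $5$ of $P$.
\item The same holds at the $p_k$ end.
\item Since $5+4+5<15$, sparsity forces both ends to involve the same edge $wz$.
\item Section~\ref{sec4} then shows no two end configurations can share an edge. Either a forbidden $2$-chord appears, or $G$ is colored directly. It is cut into the part containing $P$, the $4$- or $6$-cycle through $wz$, and the far side, each a valid target with a short precolored path, colored in sequence.
\end{itemize}
Without this two-sided mechanism, nothing guarantees that a reduction at a single end of $P$ ever succeeds. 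The constant $15$ also plays no real role in your argument.
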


The rest of the paper is dedicated to proving the theorem above by contradiction, starting with a minimum counterexample. Section~\ref{sec2} focuses on proving some basic properties of the structure of our minimum counterexample and what chords are allowed to exist. In Section~\ref{sec3} we will go over different cases for the structure of our minimum counterexample. In each case, we will ``peel off" (color) certain vertices close to our precolored path and describe the rest of the uncolored graph. When peeling off a vertex, we decrease the size of the lists of its neighboring vertices, since we are fixing its color. The goal of this process is to end up with bad subgraphs, which we call end configurations. Finally, in Section~\ref{sec4}, we will show that different combinations of these end configurations cannot simultaneously occur, implying that it is always possible to peel off vertices from at least one side of the precolored path.

\section{Minimum counterexample} \label{sec2}
Assume that Theorem~\ref{thm:valid_target} fails.
We pick a counterexample $(G,P,L)$ that minimizes $|V(G)|$ and, subject to this, minimizes $\sum_{v\in V(G)}|L(v)|$.

\subsection{Basic properties}

\begin{lem}\label{lem:degree}
    For any vertex $v\in V(G)$, $d(v)\ge |L(v)|$.
\end{lem}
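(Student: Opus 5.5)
We argue by contradiction using the minimality of $(G,P,L)$. Suppose some vertex $v$ satisfies $d(v)<|L(v)|$. Note first that $v\notin V(P)$: vertices of $P$ have $|L(v)|=1$, and $G$ is connected. Connectedness is itself a consequence of minimality, since otherwise each component (with $P$ in one of them) yields a smaller valid target, and the colorings combine. Hence every vertex of $P$ has degree at least $1=|L(v)|$, unless $G$ is a single vertex, which is trivially colorable. So we may assume $v\notin V(P)$.

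The plan is to delete $v$ and consider $(G-v,P,L')$, where $L'$ is the restriction of $L$ to $V(G)\setminus\{v\}$. If $G-v$ is $L'$-colorable (with the edges of $P$ removed), then $v$ has at most $d(v)\le |L(v)|-1$ colored neighbours. Hence some color of $L(v)$ is still available, and we extend the coloring to $v$. This contradicts the choice of $G$ as a counterexample.

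The key step is to verify that $(G-v,P,L')$ is a valid target. Then $|V(G-v)|<|V(G)|$ and minimality gives the coloring.
\begin{itemize}
\item $G-v$ is $C_3$-free and planar.
\item Deleting $v$ may expose some interior neighbours of $v$ as new boundary vertices, since the boundary of $G-v$ contains $B-v$ together with part of $N(v)$. These vertices keep their lists of size $3$ or $4$, which is allowed on the boundary.
\item The set $X_{G-v,L'}$ of interior vertices with lists of size $3$ is a subset of $X_{G,L}$ with only boundary vertices removed. Removing vertices from a $15$-sparse set can only shorten distances in $G$ versus $G-v$, and that needs care. However, distances in $G-v$ are at least the distances in $G$, so sparsity is preserved.
\item Lists on $P$ are unchanged, and all other list-size conditions persist.
\end{itemize}

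The main obstacle is validity: newly created boundary vertices might create bad vertices, bad edges, or bad, worse, or worst $4$-cycles. Bad edges need both endpoints with lists of size $2$; new boundary vertices have lists of size at least $3$, and old boundary edges were not bad, so no bad edge appears. A new bad vertex would need a list of size $2$ or adjacency to many vertices of $P$, and such a vertex already existed in $G$, unless the new boundary vertex has $|L(u)|=3$ with two neighbours on $P$ when $|P|=5$. The dangerous cases are worse and worst cycles created when interior $3$-list vertices become boundary vertices.

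To handle this, I would avoid deleting $v$ outright. Instead, I would first try reducing lists: if $|L(v)|>d(v)+1$, remove a color from $L(v)$. This lowers $\sum|L|$, stays valid, and gives a contradiction. So we may assume $|L(v)|=d(v)+1$, which gives $d(v)\le 3$.

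Alternatively, I would color $G-v$ via a smaller auxiliary target in which any problematic new boundary configuration is excluded. A cleaner route is to consider $v$ with $d(v)\le |L(v)|-1$ and note that such a $v$ is \emph{always} colorable last. Validity of $G-v$ then only needs checking for small $d(v)\le 3$, where the new boundary vertices are at most three neighbours. For these, I would argue case by case that any bad, worse, or worst cycle in $G-v$ together with $v$ yields either a triangle or a forbidden configuration already present in $G$.
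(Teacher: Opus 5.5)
Your skeleton (delete $v$, colour $G-v$ by minimality, then extend because at most $d(v)\le|L(v)|-1$ colours are blocked) is the same as the paper's proof. The paper applies minimality to $G-v$ without further comment. You are right that this needs $(G-v,P,L)$ to be a valid target. However, you never establish it: the cases you call dangerous are left as ``I would argue case by case'', and the fallback routes you sketch do not work as stated.

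\begin{itemize}
\item \textbf{List reduction.} Removing a colour from $L(v)$ need not preserve validity. An interior $4$-list $v$ joins $X_{G,L}$, which can break $15$-sparsity, make $v$ a bad vertex when $|P|=5$, or create worse or worst cycles. A boundary $3$-list $v$ dropping to $2$ colours can create bad edges, bad cycles or bad vertices. In any case this only reduces you to $d(v)=|L(v)|-1$, which still requires deleting $v$.
\item \textbf{The new boundary.} The reduction to ``at most three neighbours'' rests on a wrong description of it. If $v$ is interior, deleting it merges the faces around $v$ into an interior face, so the outer boundary does not change at all. If $v\in B(G)$, every vertex on every face incident with $v$ becomes a boundary vertex, not just vertices of $N(v)$.
\item \textbf{Bad vertices.} Your claimed exception is not one. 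Bad vertices and bad edges are defined only through list sizes and adjacency to $P$, never through boundary membership. So a $3$-list vertex with two neighbours on $P$ is already bad in $G$.
\end{itemize}

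The missing step is short and uses only the fact that components of $G[X_{G,L}]$ are $K_1$ or $K_2$. Interior vertices of $G-v$ were interior in $G$, and any newly exposed vertex with a $3$-list lies in $X_{G,L}$.

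\begin{itemize}
\item \textbf{Worse cycles.} Take a worse cycle $xyzw$ of $G-v$. If $y$ (resp.\ $x$) was interior in $G$, then $yzw$ (resp.\ $xwz$) is a path on three vertices in $G[X_{G,L}]$, which is impossible.
\item \textbf{Worst cycles.} The same argument rules out a newly exposed $y$ in a worst cycle.
\item \textbf{Bad cycles.} In a bad cycle $xyzw$, $y\in B(G)$ since $|L(y)|=2$. If both $x$ and $z$ were interior in $G$, then $xwz$ is again such a path. If exactly one was, say $z$, then $yxwz$ is a worst cycle of $G$, which is excluded.
\item \textbf{Sparsity.} We have $X_{G-v,L}\subseteq X_{G,L}$ and $\dist_{G-v}\ge\dist_G$, which give $15$-sparsity.
\end{itemize}

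Hence $(G-v,P,L)$ is a valid target, and your extension argument then completes the proof.
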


\begin{proof}
    Let $v \in V(G)$ be arbitrary. By the minimality of $G$, $G-v-E(P)$ has a proper $L$-coloring $\phi$. If $d(v) < |L(v)|$, then there exists at least one color in $L(v)$ that we can use for $v$ which was not used for any vertices in its neighborhood. Thus, we can extend $\phi$ to a proper $L$-coloring of $G-E(P)$, a contradiction. This completes the proof.
\end{proof}

\begin{lem}\label{lem:2conn}
    $G$ is $2$-connected.
\end{lem}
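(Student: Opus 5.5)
We argue by contradiction in the standard way: if $G$ is not $2$-connected, it is either disconnected or has a cut vertex, and in both cases we split $G$ along the separation, color the piece containing $P$ by minimality, and then color the remaining piece with its precolored path reduced to the cut vertex. Connectivity comes first. If $G$ were disconnected, let $G_1$ be a component meeting $P$ (all of $P$ lies in one component since $P$ is a path) and $G_2=G-V(G_1)$. Both $(G_1,P,L)$ and $(G_2,\emptyset,L)$ are targets: the boundary of each component is contained in $B$, interior vertices of $G$ remain interior or become boundary (which only relaxes the list-size conditions), and a subset of a $15$-sparse set is $15$-sparse because distances in a subgraph are at least the distances in $G$. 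They are also valid, since bad vertices, edges and cycles are defined by list sizes, adjacency to $P$, and the interior/boundary status of vertices, and none of these is created by passing to a component. So by minimality of $|V(G)|$ both are colorable, and together they give an $L$-coloring of $G-E(P)$, a contradiction.

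Now suppose $v$ is a cut vertex, and write $G=G_1\cup G_2$ with $V(G_1)\cap V(G_2)=\{v\}$ and both sides nontrivial. Since $P$ is a path it lies in one side together with $v$ possibly, say $P\subseteq G_1$. The vertex $v$ lies on the outer face, so it is a boundary vertex of $G$ and of both $G_i$. First apply minimality to $(G_1,P,L|_{G_1})$, checking validity as above, to get a coloring $\phi_1$ of $G_1-E(P)$. Then form $(G_2,P_2,L_2)$ with $P_2=v$ and $L_2(v)=\{\phi_1(v)\}$, and $L_2=L$ elsewhere. Since $|P_2|=1$, no vertex is bad. Bad edges, and bad, worse or worst cycles in $G_2$, would use only vertices other than $v$, because $v$ now has a list of size $1$, and hence would already be present in $G$. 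By Remark~\ref{rmk:p2}, a neighbor $u$ of $v$ on the boundary of $G_2$ with $|L(u)|=2$ may be absorbed into $P_2$, giving it a color from $L(u)\setminus\{\phi_1(v)\}$. Then $|P_2|=2$ and vertices with list size $2$ are still not bad. Minimality gives a coloring $\phi_2$ of $G_2$ that agrees with $\phi_1$ at $v$, and $\phi_1\cup\phi_2$ is an $L$-coloring of $G-E(P)$, a contradiction.

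The main obstacle is the bookkeeping that validity and the target conditions survive the split. Two points need care. First, interior vertices of $G$ may become boundary vertices of $G_i$. This only weakens the requirements, but it could in principle turn an interior $3$-list vertex into a boundary one inside a would-be worse or worst cycle; such cycles were already forbidden in $G$ only when their vertices were boundary, so we must argue that a $4$-cycle of $G_i$ whose required boundary vertices lie on $G_i$'s boundary was also a configuration in $G$, or otherwise avoid it. I would handle this by observing that, since $G$ is the union of the $G_i$ at a single point, each face of $G_i$ other than the one containing the other piece is a face of $G$, so the boundary of $G_i$ is $B\cap G_i$. Second, in case $P$ straddles $v$, which is impossible for a path unless $P$ meets only one side, we check that $P\subseteq G_1$ always holds.
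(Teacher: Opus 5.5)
You have a genuine gap in the cut-vertex case. You assert that ``since $P$ is a path it lies in one side,'' and at the end that $P$ straddling $v$ is impossible. This fails when $v$ is an internal vertex of $P$. Deleting an internal vertex disconnects a path, and its two halves may lie in different components of $G-v$. For example, take two $4$-cycles $p_1p_2ab$ and $p_2p_3cd$ glued at $v=p_2$, with $P=p_1p_2p_3$. Then neither $G_1$ nor $G_2$ contains $P$, and your scheme (color the side containing $P$, then the other side with $P_2=v$) does not apply.

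The paper treats $v\in V(P)$ as a separate first case. It applies minimality directly to each $(G_i,P\cap G_i,L|_{G_i})$. Since $|L(v)|=1$, the two colorings automatically agree at $v$, and their union is an $L$-coloring of $G-E(P)$. Only then does it assume $v\notin V(P)$; there $P$ does lie in one side, and your argument is exactly the paper's. When you add this case, note that $v$ becomes an endpoint of $P\cap G_i$, so the bad-vertex condition for the pieces must be rechecked. For instance, if $|P|=4$ and $v=p_2$, a $2$-list neighbor of $p_2$ in $G_2$ is bad for $(G_2,p_2p_3p_4,L)$ although it is not bad for $(G,P,L)$.

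A secondary point concerns your claim that the cut vertex lies on the outer face. It need not: a piece can be drawn inside an inner face of the rest of $G$ at an interior vertex $v$. In that case the boundary of that piece is not $B\cap G_i$, and interior vertices of $G$ do become boundary vertices, so your planned justification of validity does not work as stated.

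Validity still holds, but the reason is the sparsity of $X_{G,L}$. A vertex interior in $G_i$ is interior in $G$, and $2$-list vertices lie on $B$. Suppose a required boundary vertex of a worse or worst cycle of $G_i$ is interior in $G$. Together with the cycle's two interior vertices, it would form a three-vertex path in $G[X_{G,L}]$, contradicting sparsity. For a bad cycle $xyzw$ of $G_i$: if exactly one of $x,z$ is interior in $G$, the cycle is a worst cycle of $G$; if both are, it again yields such a path. The same remark repairs your disconnected case, where a component may likewise sit inside an inner face of another.
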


\begin{proof}
    Suppose to the contrary that $G$ contains a cut vertex $v$. Let $G_1$ and $G_2$ be the two induced subgraphs of $G$ with $V(G_1) \cap V(G_2) = \{v\}$ and $V(G_1) \cup V(G_2) = V(G)$. If $v \in V(P)$, by the minimality of $G$, there exists an $L$-coloring of $G_1-E(P)$ and $G_2-E(P)$ whose union is an $L$-coloring of $G-E(P)$, a contradiction. Hence $v \notin V(P)$.
    
    Now, without loss of generality, assume that $P \subseteq G_1$. By the minimality of $G$, there exists an $L$-coloring $\phi$ of $G_1 - E(P)$. Let $L_2$ be the restriction of $L$ to the vertices of $G_2$, except with $v$ being precolored $\varphi(v)$, and let $P' = v$. Then $(G_2, P', L_2)$ has no bad vertex since $|P'| = 1$ and no bad edge, bad cycle, worse, or worst cycle since $L_2(u) = L(u)$ for all $u \in V(G_2)-
    \{v\} \subseteq V(G)$. Since $X_{G_2,L_2} \subseteq X_{G,L}$, $(G_2, P', L_2)$ is a valid target and $G_2$ has an $L_2$-coloring $\psi$ that agrees with $\phi$ on $v$. Thus, the union of $\phi$ and $\psi$ is an $L$-coloring of $G-E(P)$, a contradiction. Therefore, $G$ is 2-connected and the proof is complete.
\end{proof}

For a cycle $C$, let $\int[C]$ (resp. $\ext[C]$) be the subgraph induced  by the vertices on $C$ and in the interior (resp. exterior) of $C$.
Let $\int(C)$ be the subgraph induced by the vertices in the interior of $C$.

\begin{lem}\label{lem:sep45}
    $G$ contains no separating $4$-cycle or $5$-cycle.
\end{lem}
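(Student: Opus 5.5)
Suppose to the contrary that $C$ is a separating $4$-cycle or $5$-cycle of $G$. The plan is the standard Thomassen-style reduction: color the outside first, then treat $C$ as a precolored boundary for the inside. Since $C$ is separating and $C$ lies in the plane graph with outer boundary $B$, the interior $\int(C)$ is nonempty and consists of interior vertices of $G$. Moreover $\ext[C]$ has fewer vertices than $G$ and contains $B$, hence $P$.

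First I will check that $(\ext[C],P,L)$ is a valid target. Its boundary is still $B$ and lists on $B$ and $P$ are unchanged. Its set of interior 3-list vertices is $X_{G,L}\cap V(\ext[C])$. Sparsity is the delicate point here, since distances in $\ext[C]$ can exceed those in $G$. However, any path through $\int(C)$ between two vertices of $C$ can be replaced by a path along $C$ of length at most $2$, which is no longer than a path through the interior (length $\ge 2$). So distances between vertices of $\ext[C]$ are preserved, and $15$-sparsity is inherited. Bad vertices, bad edges and bad/worse/worst cycles are defined only by lists, adjacencies to $B$ and $P$, and interior/boundary status, and all of these are unchanged in the subgraph. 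Hence none exist. By minimality, $\ext[C]-E(P)$ has an $L$-coloring $\phi$.

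Next I color $\int[C]$ with $C$ as outer boundary. In general $C$ has $4$ or $5$ vertices, so I let $P'$ be the path obtained from $C$ minus one edge (at most five vertices), precolored by $\phi$. The removed edge is a chord-like edge of $P'$ with distinct colors, since $\phi$ is proper. So the condition $L(u)\neq L(v)$ for $uv\in E(G)\setminus E(P')$ holds, and the edges of $C$ in $E(P')$ are also properly colored by $\phi$. Every non-$C$ vertex of $\int[C]$ is interior to $G$ with $|L|\in\{3,4\}$, so there are no vertices with $2$-lists and no bad edges, bad cycles or worst cycles. Worse cycles need two boundary vertices with $3$-lists, which is impossible since the boundary is $C=P'$. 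Sparsity again transfers, since interior distances in $\int[C]$ are at least those in $G$ by the same shortcut argument.

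The main obstacle I expect is the bad vertex condition when $|P'|=5$. There, an interior vertex $u$ of $C$ with $|N(u)\cap P'|\ge |L(u)|-1$, e.g. a $3$-list vertex with two neighbors on the $5$-cycle, or a $4$-list vertex with three, is forbidden. Since $G$ is $C_3$-free, $u$ has at most two neighbors on a $5$-cycle, and they are nonadjacent. I would handle this by splitting $C$ via $u$ into a $4$-cycle and a $5$-cycle and inducting on a smaller separating structure. Alternatively, I would choose $C$ to be a separating $4$- or $5$-cycle with $\int(C)$ minimal, and argue such a $u$ with $|L(u)|=3$ creates a smaller separating cycle unless $\int(C)=\{u\}$, in which case Lemma~\ref{lem:degree} gives $d(u)\ge3$ while $u$ has only two neighbors, a contradiction. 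Choosing $C$ with minimal interior should also dispatch the $4$-cycle case. After this, minimality yields a coloring of $\int[C]-E(P')$ agreeing with $\phi$ on $C$, and the union is an $L$-coloring of $G-E(P)$, a contradiction.
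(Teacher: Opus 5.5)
Your proposal is correct and follows the paper's proof. You color $\ext[C]$ by minimality, treat $C$ as precolored, note that in the $5$-cycle case the only possible obstruction is a bad vertex $u$ with $|L(u)|=3$ and two neighbors on $C$, and choose $C$ with minimal interior. This forces $\int(C)=\{u\}$, so $d(u)=2<3$ contradicts Lemma~\ref{lem:degree}; your first, vaguer splitting option is not needed.

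Your extra care (checking that $(\ext[C],P,L)$ is valid and turning $C$ into a path $P'$) is harmless, with two small corrections. Sparsity passes to subgraphs trivially, since distances can only grow. And the deleted edge of $C$ should be chosen outside $E(P)$, which is possible because $C\not\subseteq P$, since $\phi$ is only guaranteed to be proper on $\ext[C]-E(P)$.
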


\begin{proof}
    Suppose to the contrary that $G$ contains a separating $4$-cycle or $5$-cycle $C$. Set $G_1 = \ext[C]$ and $G_2 = \int[C]$. We choose the cycle $C$ so that $G_2$ has the minimum number of vertices. By the minimality of $G$, $G_1-E(P)$ has an $L$-coloring, call it $\phi$. Let $L_2$ be the restriction of $L$ to $G_2$ with the exception that the vertices on $C$ are precolored by $\phi$. Note that each vertex $v$ in $G_2$ has $|L(v)| \neq 2$, and hence $G_2$ has no bad edge, bad cycle, worse cycle, or worst cycle. If $G_2$ has no bad vertices, then $(G_2,C,L_2)$ is a valid target, and by the minimality of $G$, $G_2 - E(C)$ has an $L_2$-coloring $\psi$. Since $C$ is properly colored by $\phi$, the union of $\phi$ and $\psi$ is an $L$-coloring of $G-E(P)$, a contradiction.

    Thus, $G_2$ has a bad vertex $u$. Moreover, it cannot be adjacent to more than two vertices of $C$ since otherwise it would create a $C_3$. In fact, it is adjacent to two vertices in $C$. By the minimality of $G_2$, $u$ is the only vertex in $\int(C)$. Therefore, $d_G(u) = 2 < |L(u)|$, contradicting Lemma~\ref{lem:degree}. This completes the proof.
\end{proof}

\begin{lem}\label{lem:sep6}
    If $G$ contains a separating $6$-cycle $C=v_1v_2v_3v_4v_5v_6$, then up to relabelling vertices of $C$, there are vertices $x,y\in X_{G,L}$ such that one of the following must hold:
    \begin{itemize}
        \item $\int(C)=\{x\}$ and $N(x)=\{v_1,v_3,v_5\}$ where $v_1$ may or may not equal $y$.
        \item $\int(C)=\{x,y\}$, $xy \in E(X_{G,L})$, $N(x)=\{y,v_1,v_3\}$, and $N(y)=\{x,v_4,v_6\}$.
    \end{itemize}
\end{lem}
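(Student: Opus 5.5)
We follow the scheme of Lemma~\ref{lem:sep45}. Let $C=v_1\cdots v_6$ be a separating $6$-cycle, and among all such cycles choose $C$ so that $G_2=\int[C]$ has as few vertices as possible. Set $G_1=\ext[C]$. By minimality of $G$, $G_1-E(P)$ has an $L$-coloring $\phi$. Let $L_2$ agree with $L$ on $\int(C)$, and precolor $C$ by $\phi$. Since $C$ lies in $G_1$ and $\phi$ is proper there, all chords of $C$ receive distinct colors.

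Because $C$ is separating, no vertex of $\int(C)$ lies on the outer boundary $B$. Hence every such vertex has a list of size $3$ or $4$, and the size-$3$ ones lie in $X_{G,L}$. So $(G_2,C,L_2)$ has no bad edge, and no bad, worse, or worst cycle, since each of these needs a boundary vertex of list size $2$ or $3$ other than $C$. Also $X_{G_2,L_2}=X_{G,L}\cap V(\int(C))$, which is again $15$-sparse. One point needs care: the path $P$ in a target has at most five vertices, while $C$ has six. We handle this either by precoloring a $5$-vertex subpath $v_1\cdots v_5$ and giving $v_6$ the one-element list $\{\phi(v_6)\}$, or by observing that the cycle precoloring is what the arguments actually use. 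Under the first option, $v_6$ is a boundary vertex of list size $1$, which the definition formally forbids. We therefore instead delete $v_6$ and remove $\phi(v_6)$ from the lists of its interior neighbours, which makes those neighbours boundary vertices of $G_2-v_6$. This is the technical nuisance; the main content is the bad-vertex analysis below. If the resulting target is valid, minimality gives a coloring $\psi$ of the interior, and $\phi\cup\psi$ colors $G-E(P)$, a contradiction. So the reduced target must fail validity, and we analyse how.

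Failure means there is a bad vertex, or, after deleting $v_6$, a bad edge or bad cycle created by the reduced lists. First consider an interior $u$ adjacent to vertices of $C$. Since $G$ is $C_3$-free, $u$ has at most $3$ neighbours on $C$, and if it has three they are alternate, $v_1,v_3,v_5$. If $u$ has two neighbours on $C$ at distance $2$ along $C$, then $u$ splits $C$ into a $4$-cycle and a $6$-cycle. By Lemma~\ref{lem:sep45} the $4$-cycle is not separating. If $u$ has two neighbours at distance $3$, it splits $C$ into two $5$-cycles, neither separating. In the first case the new $6$-cycle either is separating with a smaller interior, contradicting the choice of $C$, or bounds a face.

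Using this, $u$ is bad only when $|N(u)\cap C|\ge |L(u)|-1$, so $|L(u)|=3$ and $u\in X$ (a list of size $4$ would need $3$ neighbours on $C$). Combining this with the face structure forced by Lemma~\ref{lem:sep45} and the minimal choice of $C$, we find that $\int(C)$ is very small. If $u$ has three neighbours $v_1,v_3,v_5$, the three resulting $4$-cycles contain no vertices, so $\int(C)=\{u\}$, giving the first outcome. The remark that $v_1$ may be $y$ covers a size-$3$ neighbour of $u$ lying on $C$. If the only bad vertices have two neighbours on $C$, Lemma~\ref{lem:degree} gives $d(u)\ge 3$, so $u$ has an interior neighbour $y$. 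The separating-cycle constraints then force $\int(C)=\{u,y\}$, with $u$ adjacent to $v_1,v_3$ and $y$ adjacent to $v_4,v_6$. Both vertices have degree $3$ and, by Lemma~\ref{lem:degree}, lists of size $3$, so $uy$ is an edge of $G[X]$, giving the second outcome.

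The main obstacle is this last case analysis: showing that a bad vertex with two $C$-neighbours forces exactly the two-vertex configuration. We also have to rule out the case where the bad vertices' neighbours have list size $4$, which would give a degree-$2$ contradiction as in Lemma~\ref{lem:sep45}. We will first try to settle this by repeatedly applying Lemma~\ref{lem:sep45} to the $4$- and $5$-cycles through $u$ and $y$, together with the minimality of $G_2$.
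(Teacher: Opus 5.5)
You have the reduction step right, and it matches the paper's: color $\ext[C]$, delete $v_6$, strip $\phi(v_6)$ from its interior neighbours, and precolor $v_1\cdots v_5$. The overall structure, however, has a genuine gap. At the outset you replace the given cycle by one minimizing $|V(\int[C])|$. That is harmless in Lemma~\ref{lem:sep45}, whose conclusion is non-existence. Here the statement describes \emph{every} separating $6$-cycle, and the paper later applies it to arbitrary ones, such as the cycles $P_W\cup\{x\}$ in the chord lemmas. Your argument can only describe a minimal one.

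Worse, for a minimal $C$ the second outcome never occurs: in that configuration $v_1xv_3v_4v_5v_6$ is itself a separating $6$-cycle with interior $\{y\}$. Your own observation confirms this. For minimal $C$, an interior $u$ whose only $C$-neighbours are $v_1,v_3$ makes $v_1uv_3v_4v_5v_6$ facial, so $\int(C)=\{u\}$ and $d(u)=2$, contradicting Lemma~\ref{lem:degree}. Hence the step ``the separating-cycle constraints then force $\int(C)=\{u,y\}$'' cannot be carried out, and nothing in your plan handles non-minimal cycles.

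There is also a smaller omission. You acknowledge, but never rule out, the bad edges and bad/worse/worst cycles that can appear after deleting $v_6$. Ruling them out needs $C_3$-freeness (two neighbours of $v_6$ are non-adjacent) and the sparsity of $X_{G,L}$, not just the absence of list-size-$2$ vertices in $\int(C)$.

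The repair, which is how the paper argues, is to separate the two uses of minimality.
\begin{enumerate}
\item The coloring reduction uses only the minimality of $G$. So for \emph{every} separating $6$-cycle, some interior vertex $x$ has at least two neighbours on $C$.
\item Next show $|\int(C)|\le 2$ for all separating $6$-cycles, by taking a counterexample with $|\int(C)|\ge 3$ minimal.
\begin{itemize}
\item By Lemma~\ref{lem:sep45}, $x$ sees exactly $v_1,v_3$; otherwise $\int(C)=\{x\}$.
\item So $C'=v_1xv_3v_4v_5v_6$ has exactly two interior vertices $y,z$.
\item A list-size-$4$ vertex there would need three alternate $C'$-neighbours, trapping the other in a separating $4$-cycle. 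Hence $y,z\in X_{G,L}$.
\item Sparsity then forces $|L(x)|=4$, hence $x\sim y,z$ and $yz\notin E(G)$.
\item Finally $d(y)\ge 3$ forces $y\sim v_4,v_6$, trapping $z$ in a separating $4$-cycle.
\end{itemize}
\item Only then classify $|\int(C)|\in\{1,2\}$ for an arbitrary $C$ by degree counting and $C_3$-freeness. This is essentially your final paragraph.
\end{enumerate}
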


\begin{proof} 
    Let $C = v_1v_2v_3v_4v_5v_6$ be a separating $6$-cycle in $G$. By the minimality of $G$, $\ext[C]-E(P)$ has an $L$-coloring $f$.

    We first assume that every vertex in $\int(C)$ is adjacent to at most one vertex in $C$.
    Let $G'=\int[C]-v_6$ and $P'=v_1v_2v_3v_4v_5$, and let $L'$ be the restriction of $L$ to $G'$ with $P'$ precolored by $f$ and $L'(v)=L(v)\setminus\{f(v_6)\}$ for $v\in N(v_6) \cap \int(C)$. As $\int(C)$ has no vertex of list size $2$, $G'$ has no bad edge. By the sparsity condition of $X_{G,L}$, $G'$ has no bad, worse, or worst cycle. Moreover, $G'$ has no bad vertices by construction. Thus, $(G',P',L')$ is a valid target and $G'-E(P')$ has an $L'$-coloring $g$.
    The union of $f$ and $g$ forms an $L$-coloring of $G-E(P)$, a contradiction.

    Therefore, there is some vertex $x \in \int(C)$ that is adjacent to at least two vertices of $C$. Assume that $|\int(C)| \geq 3$ and take $C$ so that $|\int(C)|$ is minimal. Without loss of generality, assume that the only neighbors of $x$ on $C$ are $v_1$ and $v_3$; otherwise, there is a separating $5$-cycle. By the minimality of $C$, the $6$-cycle $C' = v_1xv_3v_4v_5v_6$ contains exactly two internal vertices $y$ and $z$. If $|L(y)|=4$, then either $y$ is adjacent to two consecutive vertices on the boundary of $C'$, creating a $C_3$, or $y$ is adjacent to three vertices of $C'$ creating three $4$-cycles, one of which is a separating $4$-cylce that contains $z$. Thus, by symmetry, it must be the case that $|L(y)| = |L(z)| = 3$. Since $x$ is adjacent to at least one of $y$ or $z$, we have $|L(x)| =4$ by the sparsity of $X_{G,L}$. Therefore, $x$ must be adjacent to both $y$ and $z$, and $yz$ is not an edge to avoid a $C_3$ on $x,y,z$. So that $d(y) \geq |L(y)|$, $y$ must be adjacent to at least two vertices of $C$; however, this creates a separating $4$-cycle containing $z$.
    Hence, $|\int(C)| \leq 2$.     
    
    If $|\int(C)| = 1$, then by Lemma~\ref{lem:degree}, $x$ must have degree $3$ and be adjacent to three nonadjacent vertices on $C$. Moreover, as $x \in X_{G,L}$, either exactly one or none of these three vertices is also in $X_{G,L}$. If $|\int(C)| = 2$, then as before, we may assume that $N(x) = \{v_1, v_3, y\}$ where $y$ is the other interior vertex. As $y$ is contained in the $6$-cycle $v_1xv_3v_4v_5v_6$, $d(y) = 3$ and $y$ is adjacent to $x$, $v_4$, and $v_6$. This completes the proof.
\end{proof}

\subsection{Chords}

\begin{definition}
For $1\le i\le 4$, an $i$-chord is a path $W$ of length $i$ with two ends contained in $B$ and internally disjoint with $B$.
For an $i$-chord $W$ with end vertices $u,v$, if $|L(u)|=a$ and $|L(v)|=b$ (resp. $|L(v)|\ge b$), then we may refer to $W$ as an $(a,b)$-$i$-chord (resp. $(a,b^+)$-$i$-chord).
    
\end{definition}

For every $i$-chord $W$, there are two induced subgraphs $G_{W,1},G_{W,2}$ such that $V(G_{W,1})\cup V(G_{W,2})=V(G)$ and $V(G_{W,1})\cap V(G_{W,2})=V(W)$.
We always assume that, up to relabelling, $P\cap G_{W,1}$ is a path with $|P\cap V(G_{W,1})|$ maximized, so $P\cap B(G_{W,1})$ is not empty.
Assume that $W$ is an $i$-chord with end vertices $u,v$, then for $x\in\{u,v\}$ we may denote by $x'$ the unique consecutive neighbor of $x$ in $B(G)$ that is in $B(G_{W,2})$.
Let $P_W$ be the subpath of $G_{W,2}$ consisting of $(P\cup W)\cap B(G_{W,2})$ and $x'\in B(G_{W,2})$ if $|L(x')|=2$ for $x\in\{u,v\}$.
Let $G'_{W,1}$ be the subgraph of $G$ induced by $V(G_{W,1})\cup P_W$. Note that $G'_{W,1}$ is a valid target as $G$ has no bad, worse, or worst cycles. Therefore, if $G'_{W,1}$ is a proper subgraph of $G$, then $G'_{W,1}-E(P\cap G'_{W,1})$ has an $L$-coloring $\phi$ by the choice of $G$. In this case, we refer to $W$ as a \emph{feasible} chord. Let $L'=L'(W)$ be the restriction of $L$ to $G_{W,2}$ with $L'(x)=\{\phi(x)\}$ if $x\in P_W$ and $L'(x)=L(x)$ otherwise.
Now if $G_{W,1}'-E(P\cap G'_{W,1})$ has an $L$-coloring $\phi$, i.e. $W$ is feasible, then $(G_{W,2},P_W,L')$ should not be a valid target, otherwise $G_{W,2}-E(P_W)$ has an $L'$-coloring $\psi$, and the union of $\phi$ and $\psi$ is an $L$-coloring of $G-E(P)$.

\begin{lem}\label{lem:feasible}
    If $W$ is a feasible chord of $G$, then $|P_W|\ge 5$.
\end{lem}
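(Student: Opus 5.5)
We argue by contradiction, using the observation made just before the statement: if $W$ is feasible, then $(G_{W,2},P_W,L')$ cannot be a valid target. So we assume $|P_W|\le 4$ and show that $(G_{W,2},P_W,L')$ is valid. Its $L'$-coloring $\psi$ together with $\phi$ would then be an $L$-coloring of $G-E(P)$, a contradiction.

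We first check that $(G_{W,2},P_W,L')$ is a target at all.
\begin{itemize}
\item $G_{W,2}$ is a $C_3$-free plane graph. Its boundary is formed by $W$ together with the part of $B$ on the $G_{W,2}$ side.
\item $P_W$ is a path on at most four boundary vertices. It consists of $W$ (of length $i\le 4$), possibly some vertices of $P$, and possibly $u'$ or $v'$.
\item The interior vertices of $G_{W,2}$ are interior vertices of $G$ with unchanged lists, so $X_{G_{W,2},L'}\subseteq X_{G,L}$ is $15$-sparse.
\item Boundary vertices outside $P_W$ keep their lists, so they have at least two colors.
\item Vertices of $P_W$ receive singleton lists from $\phi$. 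These are proper across edges not in $E(P_W)$ because $\phi$ colors $G'_{W,1}-E(P\cap G'_{W,1})$, and $P_W\subseteq G'_{W,1}$.
\end{itemize}

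Next we rule out bad edges and bad, worse and worst cycles. Every vertex of $G_{W,2}-P_W$ has the same list in $L'$ as in $L$. Also, any vertex with a list of size $2$ or $3$ on the boundary of $G_{W,2}$ that is not in $P_W$ is either a boundary vertex of $G$ with the same list, or an interior vertex of $W$, and the latter all lie in $P_W$. Hence each such subgraph of $G_{W,2}$ avoiding $P_W$ is already one in $G$, with the same boundary and interior roles, contradicting the validity of $(G,P,L)$. One point needs care: a vertex that is interior in $G$ stays interior in $G_{W,2}$ unless it lies on $W$, and $W\subseteq P_W$.

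It remains to rule out bad vertices, and here $|P_W|\le4$ is what we use.
\begin{itemize}
\item If $|P_W|\le 2$, there are no bad vertices by definition.
\item If $3\le|P_W|\le4$, a bad vertex would be some $z\notin P_W$ with $|L'(z)|=2$ adjacent to an end of $P_W$. The ends of $P_W$ are $u'$ or $u$ (similarly $v'$ or $v$), or an end of $P$ lying in $G_{W,2}$.
\begin{itemize}
\item If the end is $u$, then $z=u'$ would have been included in $P_W$ by definition. A vertex with list of size $2$ is on the boundary, and a chord from $u$ to such a $z$ would be a chord of $G$.
\item If the end is $u'$ with $|L(u')|=2$, then $z$ is a neighbor of $u'$ with a list of size $2$. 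This is either a bad edge in $G$, or $z$ would have to be in $P$ since $z$ adjacent to an end of $P$ is bad in $G$.
\item Ends lying in $P$ are handled by the bad-vertex condition in $G$, together with Remark~\ref{rmk:p2}, which absorbs list-size-$2$ neighbors into $P$.
\end{itemize}
\end{itemize}

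The main obstacle is this last bullet, because chords of $G$ from an end of $P_W$ to a list-size-$2$ boundary vertex must be excluded. Our first approach is to use planarity: such a chord would lie in $G_{W,2}$ and, combined with the edge to $u'$, create a short cycle. Alternatively, we would note that this configuration is a bad vertex relative to $P$ in $G$, or would yield a smaller counterexample. We would carry out the case analysis on which end of $P_W$ is involved.
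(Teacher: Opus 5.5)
\textbf{Verdict: genuine gap.} Your reduction to showing that $(G_{W,2},P_W,L')$ is a valid target is the right one. Your checks of the target conditions, and of bad edges and bad, worse and worst cycles, are also fine. The gap is exactly the step you flag as the main obstacle, and none of your suggested routes closes it.

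\textbf{The uncovered case.} Take an end $q$ of $P_W$ that is not an end of $P$; typically $q=u\notin P$ with $|L(u')|\neq 2$. Suppose there is a boundary vertex $z\neq u'$ of $G$ on the $G_{W,2}$ side with $|L(z)|=2$ and $qz\in E(G)$. Then $qz$ is a chord of $G$, and nothing available at this point forbids it:
\begin{itemize}
\item The bad-vertex condition of $(G,P,L)$ says nothing, because $q\notin P$.
\item The same condition also says nothing when $|P|\le 2$ and $q$ is an end of $P$, since bad vertices are undefined there and Remark~\ref{rmk:p2} only absorbs \emph{consecutive} list-size-$2$ vertices.
\item Planarity gives no contradiction. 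The cycle formed by $qz$ and the boundary path from $q$ through $u'$ to $z$ can be arbitrarily long, so neither $C_3$-freeness nor Lemma~\ref{lem:sep45} applies.
\item Invoking ``there is no $(2,1^+)$-$1$-chord'' would be circular, since Lemma~\ref{lem:f-chords} is derived from the very statement you are proving.
\end{itemize}

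\textbf{The missing idea: an extremal choice of the chord.} The paper assumes that a feasible chord with $|P_W|\le 4$ exists, and picks one minimizing $|V(G_{W,2})|$. Suppose $(G_{W,2},P_W,L')$ then has a bad vertex $x$. Necessarily $|L'(x)|=|L(x)|=2$, $x\notin P_W$, and $x$ is adjacent to an end $q_1$ of $P_W$. Then the edge $q_1x$ is itself a feasible chord, again with a short precolored path, and its second side lies strictly inside $G_{W,2}$. This contradicts the minimality of $W$.

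Your ``smaller counterexample'' alternative only becomes an argument once this minimality is built into the contradiction hypothesis from the start. As written, with $W$ an arbitrary feasible chord, your case analysis cannot be completed.
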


\begin{proof}
    Suppose to the contrary that there is a feasible chord $W$ with $P_W=q_1q_2\dots q_t$ and $2 \le t\le 4$. Let $W$ be chosen such that the number of vertices in $G_{W,2}$ is minimum. Since $W$ is feasible, $(G_{W,2},P_W,L')$ cannot be a valid target.
    For $x\in G_{W,2}-P_{W}$, if $|L'(x)|\ge 2$, then $|L'(x)|= |L(x)|$, so we may conclude that $(G_{W,2},P_W,L')$ has no bad edge and no bad, worse, or worst cycle.
    If $(G_{W,2},P_W,L')$ has a bad vertex, then there exists a vertex $x$ in $B(G_{W,2})-P_W$ that is adjacent to an end vertex of $P_W$ and has list size $|L'(x)| = |L(x)| =2$. Without loss of generality, we may assume that $x$ is adjacent to $q_1$. Since $x$ is not in $P_W$, it follows that $q_1x$ is a feasible chord which contradicts the minimality of the choice of $W$.
    Thus, $(G_{W,2},P_W,L')$ is a valid target, a contradiction.
\end{proof}

\begin{lem}\label{lem:f-chords}
    There is no 
    \begin{itemize}
        \item $(2,1^+)$-$1$-chord,
         \item $(3,3^+)$-$1$-chord,
        \item $(2,2^+)$-$2$-chord,
        \item $(1,1^+)$-$1$-chord $W$ with $|P\cap B(G_{W,2})| \leq 2$,
        \item $(1,1^+)$-$2$-chord $W=uxv$ with $|P\cap B(G_{W,2})|=1$,
        \item $(1,2)$-$2$-chord $W$ with $|P\cap B(G_{W,2})| \leq 2$.
    \end{itemize}
    
\end{lem}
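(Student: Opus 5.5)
The plan is to prove each item by contradiction. Assume such a chord $W$ exists. In every case we show that $W$ is feasible and that $|P_W|\le 4$, which contradicts Lemma~\ref{lem:feasible}. In a few cases we instead contradict Lemma~\ref{lem:degree} or validity directly.

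Feasibility requires $G'_{W,1}$ to be a proper subgraph of $G$. This holds as soon as $G_{W,2}$ contains a vertex outside $V(W)\cup P_W$. That vertex cannot be missing: otherwise $G_{W,2}$ would consist of $W$ together with a short boundary path, which would be a $C_3$ or a short face, and Lemma~\ref{lem:degree} would be violated at a boundary vertex of $G_{W,2}$ with list size at least $2$ but degree $2$. Once feasibility is established, everything reduces to counting $|P_W|$. Recall that $P_W$ consists of $(P\cup W)\cap B(G_{W,2})$, together with $u'$ and/or $v'$ whenever these have list size $2$.

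The six cases are handled as follows.

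\begin{enumerate}
\item $(2,1^+)$-$1$-chord $uv$ with $|L(u)|=2$. If $|L(v)|=2$ then $uv$ is a bad edge, so $|L(v)|\ne 2$. Choose the side labelled $G_{W,2}$ so that it contains at most the $P$-vertex $v$ (when $v\in P$); the labelling convention that maximizes $P\cap G_{W,1}$ allows this. Then $P_W$ is contained in $\{u',u,v,v'\}$. Moreover, $u'$ and $v'$ cannot both be of list size $2$ together with $u$, since that would produce a bad edge $uu'$. Hence $|P_W|\le 4$.
\item $(3,3^+)$-$1$-chord. Here $P_W=\{u,v\}$ plus possibly $u',v'$, so $|P_W|\le 4$.
\item $(2,2^+)$-$2$-chord $uxv$. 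We get $P_W\subseteq\{u,x,v,v'\}$. The vertex $u'$ is excluded because $uu'$ would otherwise be a bad edge.
\item $(1,1^+)$-$1$-chord with $|P\cap B(G_{W,2})|\le 2$. Here $P_W$ is at most those two $P$-vertices plus $v$, $v'$, and $u'$. This is the critical count: we must rule out reaching $5$. When $u\in P$ and a neighbour along $P$ already lies in $G_{W,2}$, the vertex $u'$ is in $P$ and is counted once. So $|P_W|\le 2+1+1=4$ after noting that $u'$ coincides with a $P$-vertex or else $|P\cap B(G_{W,2})|=1$.
\item $(1,1^+)$-$2$-chord $uxv$ with $|P\cap B(G_{W,2})|=1$. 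We get $P_W\subseteq\{u,x,v,v'\}$, where $u'$ is either absent or counted among the $P$-vertices. A list-size-$2$ vertex $u'$ adjacent to $u\in P$ must already be handled by Remark~\ref{rmk:p2} or by the bad-vertex condition.
\item $(1,2)$-$2$-chord with $|P\cap B(G_{W,2})|\le 2$. We get $P_W\subseteq (P\cap B(G_{W,2}))\cup\{x,v\}$. The vertex $v'$ is excluded since $vv'$ would be a bad edge, so $|P_W|\le 4$.
\end{enumerate}

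In each case we also verify that $|P_W|\ge 2$, so that Lemma~\ref{lem:feasible} applies.

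The main obstacle is the careful accounting of $u'$ and $v'$ in cases 4 and 5. Adding list-size-$2$ neighbours can push $|P_W|$ to $5$, and when $u\in P$ the vertex $u'$ may or may not be a $P$-vertex. I would handle this by splitting on whether $u$ is an end vertex of $P$. If it is an interior vertex, $u'\in P$ and it is already counted. If it is an end vertex, then a list-size-$2$ vertex $u'$ would violate Remark~\ref{rmk:p2}/the bad-vertex condition when $|P|\le 4$, or else $|P|=5$, and then the bound on $|P\cap B(G_{W,2})|$ forces $G_{W,1}$ to contain most of $P$. If this direct count still fails, the fallback is to replace $W$ by a chord cutting off a smaller side and invoke the minimality argument from the proof of Lemma~\ref{lem:feasible}.
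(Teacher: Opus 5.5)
Your strategy is the paper's. Its whole proof is the single sentence that such a $W$ is feasible with $|P_W|\le 4$, contradicting Lemma~\ref{lem:feasible}, and your item-by-item bounds on $|P_W|$ are right.

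Two of your justifications are slightly off but harmless:
\begin{itemize}
\item In item~1, $G_{W,2}$ is fixed by the labelling convention, not chosen. It can contain a second $P$-vertex when $v$ is interior to $P$. For $|P|\le 4$ that vertex is $v'$, and for $|P|=5$ the vertex $u$ is already bad.
\item In item~4, your dichotomy misses $(1,1)$-$1$-chords such as $p_1p_4$ with $|P|=4$. There $u'\notin P$ although $|P\cap B(G_{W,2})|=2$, but still $P_W\subseteq\{u',u,v,v'\}$.
\end{itemize}
The fallback in your last paragraph is never needed.

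The step that fails as written is feasibility, which the paper merely asserts. Lemma~\ref{lem:degree} cannot show that $G_{W,2}$ has a vertex outside $V(W)\cup P_W$. A degree-$2$ boundary vertex with $|L|=2$ is consistent with it, and $P$-vertices have $|L|=1$. For instance, take a $(3,3)$-$1$-chord $uv$ whose other side is a $4$-face $uu'v'v$ with $|L(u')|=|L(v')|=2$. This satisfies Lemma~\ref{lem:degree}, yet $G'_{W,1}=G$; it is excluded only because $u'v'$ is a bad edge.

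The repair uses validity instead of degree. Let $Q$ be the boundary path of $G$ from $u$ to $v$ in $G_{W,2}$, and let $w$ be its first vertex after $u$ not in $P$.
\begin{itemize}
\item $w\neq v$. Otherwise $W$ is not a chord, or $G$ has a triangle, or (in item~6) $v$ is a list-$2$ vertex next to an end of $P$.
\item In items~1 and~3, $w=u'$, and $|L(w)|=2$ would make $uw$ a bad edge.
\item In items~4--6, $w$ is consecutive to an end of $P$. So $|L(w)|=2$ would make $w$ a bad vertex, or is excluded by Remark~\ref{rmk:p2} when $|P|\le 2$.
\end{itemize}
In all these cases $w\notin P_W$. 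In item~2, $Q$ has at least two internal vertices, none in $P$. One of them lies outside $P_W$ unless $Q=uu'v'v$ with $|L(u')|=|L(v')|=2$, and then $u'v'$ is a bad edge. With this replacing the degree argument, your proof is complete.
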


\begin{proof}
    If such a chord exists, then $W$ is feasible and $|P_W|\le 4$, a contradiction to Lemma~\ref{lem:feasible}.
\end{proof}

\begin{lem}\label{lem:badvertex}
    Assume that $W$ is a feasible chord.
    If $|P_W|=5$, then $(G_{W,2},P_W,L')$ has a bad vertex $x$ with $|L'(x)|=|L(x)|=3$, and $|N_G(x)\cap P_W|\ge 2$. Moreover, if $|P_W\cap P|=3$, then $x$ has exactly one neighbor in $P_W\cap P$ and one neighbor in $P_W-P$.
\end{lem}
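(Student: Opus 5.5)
Since $W$ is feasible, the triple $(G_{W,2},P_W,L')$ cannot be a valid target; otherwise $G_{W,2}-E(P_W)$ would have an $L'$-coloring $\psi$, and together with the coloring $\phi$ of $G'_{W,1}-E(P\cap G'_{W,1})$ it would give an $L$-coloring of $G-E(P)$. The plan is to exclude every obstruction to validity except a bad vertex, and then pin down what that bad vertex looks like.

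\textbf{Excluding the other obstructions.} This is the same as in the proof of Lemma~\ref{lem:feasible}. For $x\in V(G_{W,2})\setminus P_W$ we have $L'(x)=L(x)$, so any bad edge, bad cycle, worse cycle or worst cycle of $(G_{W,2},P_W,L')$ would already be one in $G$. Here I will also check that the boundary of $G_{W,2}$ only adds the vertices of $W$, and that these lie in $P_W$, so the boundary/interior roles of vertices outside $P_W$ are unchanged. Moreover, $X_{G_{W,2},L'}\subseteq X_{G,L}$, so sparsity is inherited. Hence the only possible obstruction is a bad vertex. Since $|P_W|=5$, a bad vertex is some $x\notin P_W$ with $|N_G(x)\cap P_W|\ge |L'(x)|-1=|L(x)|-1$.

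\textbf{Determining $|L(x)|$.} Because $G$ is $C_3$-free and $P_W$ is a path, $x$ has at most three neighbours on $P_W$, namely among $q_1,q_3,q_5$. So $|L(x)|=4$ would force $N(x)\cap P_W=\{q_1,q_3,q_5\}$. Then $xq_1q_2q_3$ and $xq_3q_4q_5$ are $4$-cycles, and $xq_1\cdots q_5$ is a $6$-cycle. I will show this contradicts Lemma~\ref{lem:sep45}, or Lemma~\ref{lem:degree} applied to $q_2,q_4$ in the case where these cycles bound faces, using that $P_W$ lies on the boundary of $G_{W,2}$. If $x$ is a boundary vertex with $|L(x)|=2$ adjacent to some $q_i$, then $xq_i$ is a $(2,1^+)$-$1$-chord or a consecutive boundary edge. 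The chord case is excluded by Lemma~\ref{lem:f-chords}. The edge case contradicts the definition of $P_W$, which absorbs the size-$2$ neighbours $u',v'$ of the ends of $W$. Also, consecutive boundary neighbours of $P\cap P_W$ with list size $2$ are handled by Remark~\ref{rmk:p2}. Therefore $|L(x)|=3$ and $|N_G(x)\cap P_W|\ge 2$.

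\textbf{The case $|P_W\cap P|=3$.} Write $P_W\cap P=\{a,b,c\}$ (consecutive) and $P_W-P=\{d,e\}$, which come from $W$ and possibly $u'$ or $v'$. If $x$ had two neighbours in $P\cap P_W$, then, as $P$ is a path and $G$ is triangle-free, $x$ would be adjacent to $a$ and $c$ and form a $4$-cycle with $b$. As in the previous step, $x$ would then be bad already for the original target, or would create a forbidden short chord with small $P_W$ in $G$. If $x$ had no neighbour in $P\cap P_W$, both of its neighbours would lie in $\{d,e\}$, which are adjacent, contradicting triangle-freeness. So $x$ has exactly one neighbour in each part.

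\textbf{Main obstacle.} The bookkeeping here is the delicate step: exactly which of $d,e$ lie on $W$ versus being an added $u'$, and producing a feasible chord with smaller $P_W$ to contradict Lemma~\ref{lem:feasible}. For example, when $x$ is adjacent to $a$ and $c$, I expect the path $axc$ to serve as a $2$-chord that I can test against Lemma~\ref{lem:f-chords}.
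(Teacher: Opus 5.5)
Your reduction to a bad vertex and your treatment of the case $|P_W\cap P|=3$ agree with the paper. The paper simply notes that $x$, not being bad in $(G,P,L)$, has at most one neighbor in $P$, while the two vertices of $P_W-P$ are adjacent, so the $2$-chord $axc$ is not needed.

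The genuine gap is in excluding $|L(x)|=4$. Once $N(x)\cap P_W=\{q_1,q_3,q_5\}$, Lemma~\ref{lem:sep45} only tells you that the $4$-cycles $xq_1q_2q_3$ and $xq_3q_4q_5$ are empty. Lemma~\ref{lem:degree} applied to $q_2,q_4$ then gives nothing. Each of them is either precolored (list size $1$) or a vertex of $W$, and such vertices may have further neighbors in $G_{W,1}$, so their degree is not controlled by $G_{W,2}$. For instance, take $W=uvw$, $P_W=u'uvww'$, and $x$ with $|L(x)|=4$ adjacent to $u',v,w'$, with both $4$-cycles empty faces. This violates no degree condition at $u$ or $w$. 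The $6$-cycle $xq_1\cdots q_5$ does not help either, since its interior is just these two faces.

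The missing idea is to apply Lemma~\ref{lem:degree} to $x$ itself. Since $d(x)\ge 4$, $x$ has a fourth neighbor outside both $4$-cycles, i.e.\ in the part of $G_{W,2}$ beyond $x$. If $x\in B(G)$ and $P_W\cup\{x\}$ were the whole boundary of $G_{W,2}$, this is impossible. Otherwise one of $q_1x$, $q_5x$ (when $x\in B(G)$) or $q_1xq_5$ (when $x$ is interior) is a feasible chord $W'$ with $|P_{W'}|\le 3$, contradicting Lemma~\ref{lem:feasible}.

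There is also an omission in the case $|L(x)|=2$. Calling $xq_i$ a $(2,1^+)$-$1$-chord or a boundary edge presumes $q_i\in B(G)$, but $q_i$ can be an internal vertex of $W$. Then $xq_i$ together with a segment of $W$ forms a longer chord ending at $x$: a $(1,2)$- or $(2,2^+)$-$2$-chord, or a $3$-chord. Lemma~\ref{lem:f-chords} does not dispose of all of these directly. The paper handles every position of $q_i$ at once: after reversing $P_W$ so that $i\le 3$, the path $q_1\cdots q_ix$ contains a feasible chord $W'$ with $|P_{W'}|\le 4$, again contradicting Lemma~\ref{lem:feasible}. Your $1$-chord/boundary-edge analysis is fine when $q_i\in B(G)$, but you need this chord argument for the remaining case.
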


\begin{proof}
    Denote $P_W=q_1\dots q_5$. Recall that by definition, if $v\in G_{W,2}-P_W$, then $L'(v)=L(v)$, so $(G_{W,2},P_W,L')$ contains no bad, worse, or worst cycle and no bad edge.
    Since $(G_{W,2},P_W,L')$ is not a valid target, it follows that $(G_{W,2},P_W,L')$ has a bad vertex, say $x$.
    
    Without loss of generality, we assume that there is an edge $q_ix$ for some $1 \le i\le 3$.
    If $|L(x)|=2$, then $q_1\dots q_i x$ contains a feasible chord $W'$ with $|P_{W'}|\le 4$, contradicting Lemma~\ref{lem:feasible}.
    
    If $|L(x)|=4$, then $x$ is adjacent to $q_1,q_3,q_5$. If $x$ is a boundary vertex, we cannot have $P_{W} \cup \{x\}$ be the entire boundary of $G_{W,2}$ while satisfying $d(x) \ge 4$ as this would create a separating $4$-cycle, contradicting Lemma~\ref{lem:sep45}. Thus, one of $q_1x$ or $q_5x$ (if $x$ is on the boundary) or $q_1xq_5$ (if $x$ is in the interior) is a feasible chord. This produces a feasible chord $W'$ containing $x$ with $|P_{W'}|\le 3$, contradicting Lemma~\ref{lem:feasible} again.
    Therefore, $|L'(x)|=|L(x)|=3$, and $|N_G(x)\cap P_W|\ge 2$.

    Note that $x$ is not a bad vertex in $(G,P,L)$, so it has at most one neighbor in $P$. If $|P_W\cap P|=3$ and $|P_{W}| = 5$, then the vertices in $P_{W}-P$ are adjacent. Thus, in this case $x$ must have exactly one neighbor in $P_W\cap P$ and exactly one neighbor in $P_W-P$. This completes the proof of the lemma.
\end{proof}

\begin{lem}\label{lem:233chord}
    There is no 
    \begin{itemize}
        \item feasible $(2,3^+)$-$3$-chord $uv\tilde{v}w$ with $|L(v)| = |L(\tilde{v})|=3$
        \item feasible $(1,3)$-$3$-chord $uv\tilde{v}w$ with $|L(v)| = |L(\tilde{v})|=3$ and $|P\cap B(G_{W,2})| = 1$
    \end{itemize}
\end{lem}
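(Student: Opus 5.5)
The plan is to argue by contradiction using Lemma~\ref{lem:feasible} and Lemma~\ref{lem:badvertex}. Suppose $W=uv\tilde v w$ is such a chord and choose it with $|V(G_{W,2})|$ minimum among all feasible chords of the two listed types.

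First I would compute $|P_W|$. In the first bullet, $u$ has list size $2$ and so lies outside $P$, which by the convention that $G_{W,1}$ maximizes its intersection with $P$ places $P$ in $G_{W,1}$. Thus $P_W$ consists of $W$ together with $u'$ and/or $w'$ when these have list size $2$. By Lemma~\ref{lem:f-chords} there is no $(2,1^+)$-$1$-chord, so $u'$ has list size at least $3$ and is not added. Hence $|P_W|\le 5$, and Lemma~\ref{lem:feasible} forces $|P_W|=5$, namely $P_W=uv\tilde v w w'$ with $|L(w')|=2$. In the second bullet, $P_W$ consists of $u$ (the single vertex of $P$ in $G_{W,2}$), the vertices of $W$, and possibly $w'$; since $u$ is already in $P$, $u'$ is not added. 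So again $|P_W|\le 5$, and $P_W=uv\tilde v w w'$ with $|L(w')|=2$.

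Next I would apply Lemma~\ref{lem:badvertex}. It gives a vertex $x\in G_{W,2}-P_W$ with $|L(x)|=3$ that is bad, i.e.\ $|N(x)\cap P_W|\ge 2$. Since $G$ is $C_3$-free, $x$ is not adjacent to two consecutive vertices of $P_W$. So its neighbours in $P_W$ form one of the pairs $\{u,\tilde v\},\{v,w\},\{\tilde v,w'\},\{u,w\},\{v,w'\},\{u,w'\}$.

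I would then eliminate each case using the sparsity of $X_{G,L}$, the forbidden chords of Lemma~\ref{lem:f-chords}, and the minimality of $W$.
\begin{itemize}
\item If $x\sim v$ or $x\sim\tilde v$ and $x$ is interior, then $x,v,\tilde v$ are interior vertices with list size $3$, and $x$ would be adjacent to the edge $v\tilde v$ of $X$. This contradicts $15$-sparsity, because $\{v,\tilde v\}$ is a $K_2$ component and $x$ would be a distinct component at distance $1$. The same holds for the $\{u,w\},\{v,w'\},\{u,w'\}$ cases whenever $x$ touches $v$ or $\tilde v$.
\item If $x$ is on the boundary, the relevant edges are $1$-chords. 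For instance, $xw'$ with $|L(w')|=2$ is a forbidden $(2,1^+)$-$1$-chord, and $xv$ or $x\tilde v$ cannot arise since $v,\tilde v$ are interior.
\item In the remaining interior cases, $\{u,w\}$ gives $uxw$, a $2$-chord. This either creates a separating $4$- or $5$-cycle with $W$, excluded by Lemma~\ref{lem:sep45}, or, if $uxw$ is itself feasible, it is forbidden by Lemma~\ref{lem:f-chords}.
\item The case $\{u,w'\}$ gives $uxw'$, a $(2,2)$-$2$-chord in the first bullet and a $(1,2)$-$2$-chord with $|P\cap B|\le 2$ in the second. Both are excluded by Lemma~\ref{lem:f-chords}.
\end{itemize}

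The main obstacle is the cases where $x$ is adjacent to $w'$ together with $v$ or $\tilde v$, since these could form a new $3$-chord to which minimality must apply. Take $x\sim\tilde v,w'$: the path $u v\tilde v x w'$ has a $4$-cycle $\tilde v w w' x$. Here sparsity already kills the case when $x$ is interior, as $x$ is adjacent to $\tilde v\in X$. If $x$ were instead a boundary vertex, then $\tilde v x$ gives a shorter chord $uv\tilde v x$ of the same type with a smaller $G_{W,2}$, contradicting the choice of $W$. I expect the bookkeeping of which side is $G_{W,2}$ for these derived chords to need the most care.
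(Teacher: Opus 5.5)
Your setup matches the paper: you take $G_{W,2}$ minimal, get $P_W=uv\tilde v ww'$, and use Lemma~\ref{lem:badvertex} to obtain a bad vertex $x$ with $|L(x)|=3$. But the case analysis has a genuine gap, and it sits exactly where the content of the lemma lies.

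Every interior candidate is excluded at once by sparsity. Each vertex of $P_W$ is within distance $2$ of the $K_2$-component $\{v,\tilde v\}$ of $G[X_{G,L}]$, so an interior $x$ adjacent to $P_W$ violates $15$-sparsity. Your chord arguments for interior $x$ are therefore unnecessary. The real work is for $x\in B(G)$, and there two of your claims are false.
First, you say ``$xv$ or $x\tilde v$ cannot arise since $v,\tilde v$ are interior.'' An edge from a boundary vertex to an interior vertex is an ordinary edge, not a chord, and nothing forbids it; your own last paragraph uses $x\tilde v$.
Second, you treat $xu$ and $xw'$ as $1$-chords. But $x$ may be the consecutive boundary neighbor of $u$ (i.e.\ $x=u'$) or of $w'$, and then no chord lemma applies. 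Likewise $uxw'$ is not a $2$-chord when $x\in B(G)$.

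This leaves three configurations unhandled.
(i) $x\in B(G)$ adjacent to $v$ and consecutive with $w'$. Use the $2$-chord $uvx$: it is a $(2,3)$-$2$-chord, or a $(1,3)$-$2$-chord with only $u\in P$ on its far side, and both are excluded by Lemma~\ref{lem:f-chords}.
(ii) $x=u'$ adjacent to $\tilde v$. When $|L(u)|=2$, the $4$-cycle $uv\tilde v x$ is a worst cycle, but in the second bullet nothing local excludes it. The paper passes to the auxiliary $2$-chord $w\tilde v x$ and shows it is feasible. It then applies Lemma~\ref{lem:badvertex} again to get a boundary vertex $z$; minimality of $W$, the chord lemmas and the absence of bad cycles force $z$ to be consecutive with $w'$ and $x'$. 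This yields the separating $6$-cycle $w'w\tilde v xx'z$.
(iii) $x$ consecutive with both $u$ and $w'$ and not adjacent to $\tilde v$. Then $d(x)\ge 3$ makes $uv\tilde v ww'x$ a separating $6$-cycle.

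In (ii) and (iii) the contradiction comes from Lemma~\ref{lem:sep6}: the interior of such a cycle consists of vertices of $X_{G,L}$ within distance $4$ of $v\tilde v$, violating sparsity. Separating $6$-cycles are the ingredient your plan lacks.

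Two smaller points.
In your $\{\tilde v,w'\}$ case, the chord $uv\tilde vx$ is feasible only once $x\neq u'$ is known. The paper uses the bad cycle $\tilde v ww'x$ when $|L(w)|=3$, and when $|L(w)|=4$ it uses the worst cycle $uv\tilde v u'$ to rule out $x=u'$.
Also, $u'\notin P_W$ is not a consequence of Lemma~\ref{lem:f-chords}, which says nothing about the boundary edge $uu'$. In the first bullet, $|L(u')|=2$ would give a bad edge $uu'$; in the second, $u'$ would be a bad vertex, or a vertex of $P$ by Remark~\ref{rmk:p2}.
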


\begin{proof}
    Suppose to the contrary that $W=uv\tilde{v}w$ is a feasible $(2,3^+)$-$3$-chord or $(1,3)$-$3$-chord with $|L(v)| = |L(\tilde{v})|=3$ and $|P\cap B(G_{W,2})| \leq 1$. We choose $W$ such that $|V(G_{W,2})|$ is minimum. Note that $v\tilde{v}$ is an induced matching in $X_{G,L}$.
    By Lemma~\ref{lem:feasible}, we have $|P_W|=5$ and $P_W=uv\tilde{v}ww'$ such that $|L(w')|=2$.
    Since $W$ is feasible, $(G_{W,2},P_W,L)$ cannot be a valid target. By Lemma~\ref{lem:badvertex}, there is some bad vertex $y$ in $(G_{W,2},P_W,L)$ with $|L(y)|=3$ and $|N_G(y)\cap P_W|\geq 2$. Since $v\tilde{v}\in E(X_{G,L})$, $y$ must be a boundary vertex.
    
    If $y$ is adjacent to $v$, then $W' = uvy$ is either a $(2,3^+)$-$2$-chord or a $(1,3)$-$2$-chord with $|P\cap B(G_{W',2})| = 1$ contradicting Lemma~\ref{lem:f-chords}.
    If $y$ is adjacent to $w$, then $G$ has a $(3,3^+)$-$1$-chord, a contradiction. Thus, $y$ is adjacent to at least one of $u$ and $w'$, and the edges $uy$ and $uw'$, if they exist, must lie on the boundary by Lemma~\ref{lem:f-chords}.

    If $|L(w)| = 3$ and $y$ is adjacent to $\tilde{v}$ and $w'$, then $ww'y\tilde{v}$ forms a bad cycle. If $|L(w)| = 4$ and $y$ is adjacent to $\tilde{v}$ and $w'$, then $|L(u)| = 2$ by assumption and the chord $uv\tilde{v}y$ is a feasible $(2,3)$-$3$-chord to avoid having a worst cycle. However, this contradicts our choice of $W$. Assume that $y$ is adjacent to $\tilde{v}$ and $u$ with $uy$ being an edge of $B(G)$. We consider the chord $W' = w\tilde{v}y$. Since $y$ is not adjacent to $w'$, $W'$ is feasible, and $P_{W'} = w'w\tilde{v}yy'$ with $|L(y')| = 2$. By Lemma~\ref{lem:badvertex} and the distance condition on $X_{G,L}$, there exists a boundary vertex $z$ with $|L(z)|=3$ and $|N_G(z)\cap P_{W'}|\ge 2$. In order to respect the minimality of $W$ while avoiding the creation of a chord in Lemma~\ref{lem:f-chords} or a bad cycle, $w'zy'$ must form a path in $B(G)$ and $z$ is not adjacent to $\tilde{v}$. However, this implies $w'w\tilde{v}yy'z$ is a separating $6$-cycle in order for $d(z)\ge 3$. As before, Lemma~\ref{lem:sep6} then implies there exists some vertex in the interior with list size $3$ which contradicts the distance condition on $X_{G,L}$.

    Hence, $y$ must be adjacent to $u$ and $w'$ and not adjacent to $\tilde{v}$. Then $P_W$ along with $y$ forms a separating $6$-cycle as $d(y)\ge 3$. By Lemma~\ref{lem:sep6}, there exists some vertex inside the cycle with list size $3$, contradicting the distance condition on $X_{G,L}$.
    
    Therefore, there is no bad vertex $y$ contradicting Lemma~\ref{lem:badvertex} and the proof is complete.
\end{proof}

\begin{lem}\label{lem:332chord}
    There is no feasible $(3,3^+)$-$2$-chord $uvw$ with $|L(v)|=3$.
\end{lem}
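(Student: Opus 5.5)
We argue by contradiction, in the same style as the proof of Lemma~\ref{lem:233chord}. Suppose $W=uvw$ is a feasible $(3,3^+)$-$2$-chord with $|L(v)|=3$, with $|L(u)|=3$ and $|L(w)|\ge 3$. Among all such chords, choose $W$ so that $|V(G_{W,2})|$ is minimum. Since $|L(u)|,|L(w)|\ge 3$, neither $u$ nor $w$ lies in $P$. Hence $P\cap B(G_{W,2})$ consists only of vertices of $P$ lying on the $G_{W,2}$ side. By the labelling convention, $P\cap G_{W,1}$ is maximized, so $P\cap B(G_{W,2})=\emptyset$. Then $P_W$ consists of $u,v,w$ together with $u'$ and/or $w'$ when these have list size $2$. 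Thus $|P_W|\le 5$, and Lemma~\ref{lem:feasible} forces $|P_W|=5$. Therefore $P_W=u'uvww'$ with $|L(u')|=|L(w')|=2$.

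By Lemma~\ref{lem:badvertex}, $(G_{W,2},P_W,L')$ has a bad vertex $y$ with $|L(y)|=3$ and $|N_G(y)\cap P_W|\ge 2$. First we show $y$ is a boundary vertex. The vertex $v$ is interior with $|L(v)|=3$, so $v\in X_{G,L}$. If $y$ were interior, then $y\in X_{G,L}$. Since $y$ has two neighbors in $P_W$, it is within distance $2$ of $v$. If $y$ is adjacent to $v$, then $vy$ is a $K_2$-component of $G[X_{G,L}]$. Otherwise $v$ and $y$ are distinct nonadjacent interior vertices at distance at most $3$, and we must check whether $15$-sparsity excludes this. Two $K_1$ components may be close, so if $v$ and $y$ are both isolated in $X_{G,L}$ there is no immediate contradiction. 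In that case we fall back on the degree and separating-cycle arguments below.

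Next we run through the possible pairs of neighbors of $y$ in $P_W$, excluding triangles. The possible pairs are $\{u',v\}$, $\{u',w\}$, $\{u',w'\}$, $\{u,w\}$, $\{u,w'\}$, $\{v,w'\}$.
\begin{itemize}
\item If $y\sim u$ and $y\sim w$, then $uywv$ is a $4$-cycle. Lemma~\ref{lem:sep45} forces it to bound a face, which makes $y$ and $v$ fit too closely. If $y$ is on the boundary, then $uy$ or $wy$ is a $(3,3)$-$1$-chord, or an edge of $B$, and degree considerations give a contradiction via Lemma~\ref{lem:f-chords}.
\item If $y\sim u'$ (list size $2$), then $u'y$ lies in $B$ or is a $(2,3)$-$1$-chord, which Lemma~\ref{lem:f-chords} forbids. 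The same holds for $w'$.
\item If $y\sim v$ and $y$ is on the boundary, then $uvy$ or $wvy$ is a new $(3,3)$-$2$-chord through $v$. This chord is feasible, and its $G_{\cdot,2}$ side is strictly smaller, contradicting the minimality of $W$.
\item If $y\sim w$ together with $u'$ or $w'$, then $wy$ is a $(3^+,3)$-$1$-chord, excluded by Lemma~\ref{lem:f-chords}, unless it lies on $B$.
\end{itemize}
Once these are excluded, the surviving case is that $y$ is a boundary vertex adjacent to both $u'$ and $w'$ along $B$. Then $u'uvww'y$ bounds $G_{W,2}$, and $d(y)\ge 3$ forces this $6$-cycle to be separating. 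Lemma~\ref{lem:sep6} then produces interior list-size-$3$ vertices adjacent to $v$'s neighbors. This yields an $X_{G,L}$ edge within distance $15$ of $v$, or an edge $vx$ with another $X$-vertex nearby, contradicting sparsity. Alternatively, it produces a bad or worst cycle in $G$.

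The main obstacle is the detailed case analysis for $y$. For each case we must verify that the auxiliary chord created (for example $uvy$ or $w\,v\,y$) is genuinely feasible, meaning $G'_{W',1}$ is a proper subgraph. We must also verify that it is smaller in the sense of the chosen minimality, or else land on a bad, worse or worst cycle. I would handle this as in Lemma~\ref{lem:233chord}: when a new chord $W'$ appears, apply Lemma~\ref{lem:badvertex} to $W'$ in turn, and close off with Lemma~\ref{lem:sep6} together with the $15$-sparsity of $X_{G,L}$.
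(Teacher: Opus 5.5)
Your overall plan matches the paper's: minimize $|V(G_{W,2})|$, get $P_W=u'uvww'$, take a bad vertex $y$ from Lemma~\ref{lem:badvertex}, and finish with a separating $6$-cycle. However, the case where $y$ is an \emph{interior} vertex is never closed. You rightly note that sparsity cannot force $y$ onto the boundary. Yet every bullet afterwards argues with $1$-chords and edges of $B$, which presupposes $y\in B$. The interior case needs three separate arguments.

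\textbf{(i) Interior $y$ with $y\sim v$.} Triangle-freeness gives $y\not\sim u,w$, so $y\sim u'$ or $y\sim w'$. Observing that $vy$ is a $K_2$-component of $G[X_{G,L}]$ yields no contradiction by itself. The missing ingredient is Lemma~\ref{lem:233chord}: $u'yvw$ or $w'yvu$ is a feasible $(2,3^+)$-$3$-chord whose two internal vertices both have list size $3$. On the $u'$ side, $u'uvy$ is also directly a worst cycle.

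\textbf{(ii) Interior $y$ with $y\sim u$ and $y\sim w$.} Saying that $y$ and $v$ ``fit too closely'' is not a contradiction, because $15$-sparsity imposes no distance condition between two $K_1$-components. What works is your own minimality. The path $uyw$ is a $(3,3^+)$-$2$-chord with $|L(y)|=3$. Since $uvwy$ bounds a face by Lemma~\ref{lem:sep45}, we get $G_{uyw,2}=G_{W,2}-v$. The chord is feasible: $d(y)\ge 3$, and $y$ cannot be adjacent to $v$, $u'$ or $w'$ without creating a triangle. So $y$ has a neighbor in $G_{uyw,2}$ outside $P_{uyw}$, contradicting the choice of $W$.

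\textbf{(iii) The remaining interior pairs $\{u',w\}$, $\{u,w'\}$, $\{u',w'\}$.} These are excluded because they produce $(2,2^+)$-$2$-chords (Lemma~\ref{lem:f-chords}), not $1$-chords as in your bullets.

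With these added, your boundary cases and the final appeal to Lemma~\ref{lem:sep6} (a bad cycle or a violation of sparsity) agree with the paper, up to one detail. When $y\in B$ and $y\sim v$, your new chord $uvy$ or $wvy$ need not be feasible: in the configuration $y\sim u',v,w'$, neither is. In that case $u'uvy$ is a bad cycle, which is exactly the extra alternative the paper lists.
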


\begin{proof}
    Suppose to the contrary that $W=uvw$ is a feasible $(3,3^+)$-$2$-chord with $|L(v)|=3$ such that $|V(G_{W,2})|$ is minimal.
    By Lemma~\ref{lem:feasible}, $|P_W|=5$ and we may denote $P_W=u'uvww'$ where $|L(u')|=|L(w')|=2$.
    Since $W$ is feasible, $(G_{W,2},P_W,L')$ cannot be a valid target.
    By Lemma~\ref{lem:badvertex}, there is some bad vertex $x$ in $(G_{W,2},P_W,L')$ with $|L(x)|=3$ and $|N_G(x)\cap P_W|=2$. Assume that $x$ is an interior vertex. If $x$ is adjacent to $v$, then $x$ is adjacent to either $u'$ or $w'$. However, in either case, this creates a feasible $(2,3^+)$-$3$-chord contradicting Lemma~\ref{lem:233chord}. Thus, $xv \notin E(G)$. Since $G$ has no $(2,2^{+})$-$2$-chord, $x$ must be adjacent to $u$ and $w$, which contradicts the minimality of our choice of $W$. Therefore, $x$ is a boundary vertex of $G_{W,2}$.
    
    If $x$ is adjacent to $v$, then $x$ is adjacent to one of $u',w'$, which creates a $(2,3)$-$1$-chord, a bad cycle, or a feasible $(3,3^+)$-$2$-chord with smaller $|V(G_{W,2})|$. If $x$ is adjacent to $u$ (resp.~$w$), then $ux$ (resp.~$wx$) is a $(3,3^+)$-$1$-chord, contradicting Lemma~\ref{lem:f-chords}.
    
    Hence, $x$ is adjacent to $u'$ and $w'$ on the boundary of $B(G_{W,2})$, but is not adjacent to the vertex $v$. Thus, $x$ along with $P_W$ forms a separating $6$-cycle in order to meet the degree requirement of $x$. However, this either creates a bad cycle or contradicts the definition of $X_{G,L}$. Therefore, there is no feasible $(3,3^+)$-$2$-chord $uvw$ with $|L(v)|=3$. This completes the proof of the lemma.
\end{proof}

\begin{lem}\label{lem:132chord}
    There is no feasible $(1,3)$-$2$-chord $uvw$ with $|P \cap B(G_{W,2})| \leq 2$ and $|L(v)|=3$.
\end{lem}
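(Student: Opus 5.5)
We argue by contradiction, following the template of Lemmas~\ref{lem:332chord} and~\ref{lem:233chord}. Suppose $W=uvw$ is a feasible $(1,3)$-$2$-chord with $|L(u)|=1$ (so $u\in P$), $|L(w)|=3$, $|L(v)|=3$ and $|P\cap B(G_{W,2})|\le 2$, and choose such a $W$ with $|V(G_{W,2})|$ minimum. The first step is to identify $P_W$. It consists of $P\cap B(G_{W,2})$ (at most two vertices, one of them $u$), together with $v$, $w$, and the neighbors $x'$ of the ends when $|L(x')|=2$. Lemma~\ref{lem:feasible} gives $|P_W|\ge 5$, so $P_W$ must be exactly one of the following.
\begin{itemize}
\item[(i)] $P_W=pu vww'$, where $p\in P$ is consecutive to $u$ in $B(G_{W,2})$ and $|L(w')|=2$.
\item[(ii)] $P_W=u'uvww'$, where $u'$ is a list-size-$2$ boundary vertex consecutive to $u$, $|P\cap B(G_{W,2})|=1$, and $|L(w')|=2$.
\end{itemize}
In both cases $|L(w')|=2$ is forced. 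In case (i) we have $|P_W\cap P|=2$; in case (ii), $|P_W\cap P|=1$.

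By Lemma~\ref{lem:badvertex}, $(G_{W,2},P_W,L')$ then has a bad vertex $x$ with $|L(x)|=3$ and at least two neighbors on $P_W$. Since $C_3$ is excluded, these neighbors are non-consecutive on $P_W$. Because $v\in X_{G,L}$ and $|L(x)|=3$, the sparsity of $X_{G,L}$ forces $xv\notin E(G)$ whenever $x$ is interior: otherwise $x$ and $v$ would lie in a common $K_2$ component of $X_{G,L}$ together with $w$ or another vertex, or at distance less than $15$ from another component. So we first treat interior $x$. Its two neighbors come from $\{p\text{ or }u',\,u,\,w,\,w'\}$, taken non-consecutive:
\begin{itemize}
\item $x\sim u,w$: then $uxw$ is a $(1,3)$-$2$-chord with $|L(x)|=3$ and a smaller $G_{W,2}$. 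This contradicts minimality, provided the $P$-condition is preserved, which it is, since $P\cap B$ only shrinks.
\item $x\sim w'$ together with $u$ or $p$: this is a $(2,1)$-$2$-chord, excluded by Lemma~\ref{lem:f-chords}.
\item $x\sim u',w$ or $x\sim u',w'$: these are $(2,3^+)$- or $(2,2)$-$2$-chords, again excluded by Lemma~\ref{lem:f-chords}.
\item $x\sim p,w$: then $pxw$ is a $(1,3)$-$2$-chord. We must check that its $P\cap B(G_{W',2})$ is still of size at most $2$ and that its side is smaller, which handles it by minimality.
\end{itemize}

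Next we treat boundary $x$. An edge $xw$ is a $(3,3)$-$1$-chord, and $xw'$ with $x$ not consecutive would be a $(2,3)$-$1$-chord, both excluded by Lemma~\ref{lem:f-chords}. Likewise $xu$ or $xp$, if it is a chord, gives a $(1,3)$-$1$-chord with too few $P$ vertices on the far side. If instead $x\sim v$, then $x$ must also be adjacent to one of $u'$ or $w'$ (or $p$). Each of these options yields either a $(2,3)$-$2$-chord through $v$, or the $4$-cycle $vww'x$ with list sizes $3,3,2,3$. That $4$-cycle is a bad cycle, since $v$ is interior and $w,w',x$ lie on the boundary. What remains is that $x$ is a boundary vertex adjacent only to the two ends of $P_W$ along $B(G_{W,2})$. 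Then $P_W\cup\{x\}$ is a $6$-cycle, and $d(x)\ge 3$ makes it separating. By Lemma~\ref{lem:sep6} its interior contains a vertex of $X_{G,L}$ adjacent to vertices of the cycle, and this is too close to the edge $vw$ or to $v$, contradicting $15$-sparsity, exactly as at the end of Lemma~\ref{lem:332chord}.

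I expect the main obstacle to be the case bookkeeping when $x$ is adjacent to the $P$-end ($p$ or $u$) and to $w$ or $w'$. In those cases we must verify that the new chord still satisfies the hypothesis $|P\cap B(G_{W',2})|\le 2$ and is genuinely smaller, or else that it falls under Lemma~\ref{lem:f-chords} or Lemma~\ref{lem:233chord}. I would handle this by always orienting the new chord so that its $G_{W',2}$ is contained in $G_{W,2}$.
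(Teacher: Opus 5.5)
Your proposal has a genuine gap in how it treats interior bad vertices. You exclude $xv\in E(G)$ for interior $x$ by appealing to sparsity, but that step fails. $X_{G,L}$ contains only interior vertices, so $w\notin X_{G,L}$. Moreover, $15$-sparsity explicitly permits $\{v,x\}$ to be a $K_2$ component of $G[X_{G,L}]$.

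So an interior $x$ with $|L(x)|=3$ can be adjacent to $v$. It cannot also be adjacent to $p_2$ or $w$, since that would close a triangle, so it is adjacent to exactly one of $p_1,w'$. This case is not excluded by anything you wrote, and it is in fact the heart of the paper's proof.

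\begin{itemize}
\item The subcase $x\sim v,p_1$ is handled by Lemma~\ref{lem:233chord}. Here $p_1xvw$ is a feasible $(1,3)$-$3$-chord whose two middle vertices have list size $3$, with only $p_1$ on the far side.
\item The subcase $x\sim v,w'$ is not covered by any earlier lemma, and needs a real argument. The paper takes the $(1,2)$-$3$-chord $W'=p_2vxw'$, which is feasible with $P_{W'}=p_1p_2vxw'$. Lemma~\ref{lem:badvertex} then gives a second bad vertex $y$ with $|L(y)|=3$. The distance condition around the $K_2$ $\{v,x\}$ forces $y$ onto the boundary. Then $y\sim x$ is excluded because $w'xy$ would be a $(2,3)$-$2$-chord. $y\sim v$ is excluded because $wvy$ would be a feasible $(3,3)$-$2$-chord, contradicting Lemma~\ref{lem:332chord}. $y\sim p_2$ is excluded by Lemma~\ref{lem:f-chords}. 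What remains is a separating $6$-cycle $p_1p_2vxw'y$, whose interior vertices given by Lemma~\ref{lem:sep6} are too close to $\{v,x\}$.
\end{itemize}

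Nothing in your proposal plays this role, so the proof as written does not go through.

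Some smaller points also need fixing:
\begin{itemize}
\item Your case (ii) is vacuous. A $(1,3)$-$2$-chord with $|P\cap B(G_{W,2})|=1$ is already excluded by Lemma~\ref{lem:f-chords}, so $u=p_2$ and $P_W=p_1p_2vww'$.
\item For a boundary $x$ adjacent to $v$ and $p_1$, no $(2,3)$-$2$-chord arises. You need instead that $xvw$ is a feasible $(3,3)$-$2$-chord with $|L(v)|=3$, which contradicts Lemma~\ref{lem:332chord}.
\item In your final separating-$6$-cycle step, the contradiction cannot be proximity to ``the edge $vw$'', since that edge is not in $G[X_{G,L}]$. Instead, the interior configuration allowed by Lemma~\ref{lem:sep6} must either create a bad cycle, create an excluded chord, or put $v$ on a path of three vertices of $X_{G,L}$.
\end{itemize}
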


\begin{proof}
Suppose to the contrary that there is a feasible $(1,3)$-$2$-chord $W = uvw$ with $|P \cap B(G_{W,2})| \leq 2$ and $|L(v)|=3$. Among all such chords, we choose the one such that $|V(G_{W,2})|$ is minimum. By Lemma~\ref{lem:f-chords}, $|P \cap B(G_{W,2})| = 2$. Assume $p_1, p_2 \in G_{W,2}$ so that $u=p_2$. By Lemma \ref{lem:feasible} and Remark \ref{rmk:p2}, we have $P_W=p_1p_2vww'$ where $w' \in N(w)$ with $|L(w')|=2$. By Lemma \ref{lem:badvertex}, $(G_{W,2},P_W, L')$ has a bad vertex $x$ with $|L'(x)|=|L(x)|=3$ and $|N(x) \cap P_W| \geq 2$.

We claim that $x$ is adjacent to $v$. Assume $x$ is not adjacent to $v$ and is an interior vertex. Then $x$ is not adjacent to $w'$ as $W' = w'xp_i$ with $i = 1,2$ is a feasible chord with $|P_{W'}| \leq 4$. Moreover, $x$ is not adjacent to $w$ as the chord $wxp_i$ with $i = 1,2$ would contradict our minimal choice of $W$. Thus, if $x$ is not adjacent to $v$, then $x$ must be a boundary vertex. Observe that $x$ cannot be adjacent to $w$ as $xw$ would be a $(3,3)$-$1$-chord, contradicting Lemma~\ref{lem:f-chords}. Similarly, if $x$ is adjacent to $p_2$ then $W' = p_2x$ is a $(1,3)$-$1$-chord with $|P \cap B(G_{W',2})| \leq 2$, contradicting Lemma~\ref{lem:f-chords}. If $x$ is adjacent to $p_1$ and $w'$, then $w'xp_1$ is a subpath of $B(G_{W,2})$ in order to avoid creating the chords $w'x$ and $p_1x$. Since we assume $x$ is not adjacent to $v$, $P_W \cup \{x\}$ forms a separating $6$-cycle to meet the degree requirements of $x$. To avoid creating a bad cycle while still satisfying Lemma~\ref{lem:sep6}, this separating $6$-cycle has exactly two interior vertices $y$ and $z$ with $|L(y)| = |L(z)| = 3$ such that $N_{G}(y) = \{x, p_2, z\}$ and  $N_{G}(z) = \{v, w', y\}$. However, the path $vzy$ contradicts the definition of $X_{G,L}$. Thus, $x$ is adjacent to $v$. 

If $x$ is a boundary vertex and adjacent to $v$, then $x$ cannot be adjacent to $w'$ as either $w'x$ would be a $(2,3)$-$1$-chord or $ww'xv$ would be a bad cycle. However, $x$ not being adjacent to $w'$ implies $xvw$ is a feasible $(3,3)$-$2$-chord with $|L(v)|=3$, contradicting Lemma~\ref{lem:332chord}. Hence, $x$ is an interior vertex adjacent to $v$. By Lemma~\ref{lem:233chord}, $x$ cannot be an interior vertex adjacent to both $v$ and $p_1$.

Assume that $x$ is an interior vertex adjacent to $v$ and $w'$ and consider the $(1,2)$-$3$-chord $W'=p_2vxw'$. It is easy to see that $W'$ is feasible and $P_{W'}=p_1p_2vxw'$. Let $L''=L''(W')$ be the restriction of $L'$ to $G_{W',2}$. By Lemma \ref{lem:badvertex}, $(G_{W',2}, P_{W'}, L'')$ contains a bad vertex $y$ with $|L''(y)|=|L(y)|=3$ and $|N(y) \cap P_{W'}| \geq 2$. By the distance condition on $X_{G,L}$, $y$ is in $B(G_{W',2})$. Note that $y$ is not adjacent to $x$ or $v$; otherwise, $w'xy$ is a $(2,3)$-$2$-chord or $wvy$ is a feasible $(3,3)$-$2$-chord with $|L(v)| = 3$, respectively. If $y$ is adjacent to $p_2$, then $W'_1=yp_2$ is a $(1,3)$-$1$-chord with $|P \cap B(G_{W'_1,2})| \leq 2$, contradicting Lemma \ref{lem:f-chords}. Hence, $y$ is adjacent to $p_1$ and $w'$. If $y$ and $w'$ are not consecutive vertices on $B(G)$, then $w'y$ is a $(2,3)$-$1$-chord, contradicting Lemma \ref{lem:f-chords}. If $y$ and $p_1$ are not consecutive vertices on $B(G)$, then $W_2'=p_1 y$ is a $(1,3)$-$1$-chord with $|P \cap B(G_{W'_2,2})| \leq 2$, contradicting Lemma \ref{lem:f-chords}. Thus, $p_1, y, w'$ are three consecutive vertices on $B(G)$. Since $y$ is not adjacent to $v$ or $x$, the cycle $C=p_1 p_2 v x w' y$ must be a separating $6$-cycle in order to meet the degree requirements of $y$. By Lemma \ref{lem:sep6}, the interior vertices of this separating $6$-cycle have list size $3$, which contradicts the distance condition on $X_{G,L}$. Therefore, $(G_{W,2}, P_{W}, L')$ has no bad vertex contradicting Lemma~\ref{lem:badvertex}. This completes the proof.
\end{proof}

\begin{lem}\label{lem:122chord1} 
    There is no $(1,2)$-$2$-chord $uvw$ with $|L(v)| = 3$.
\end{lem}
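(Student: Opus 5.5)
We argue by contradiction, following the template of Lemmas~\ref{lem:332chord} and~\ref{lem:132chord}. Suppose $W=uvw$ is a $(1,2)$-$2$-chord with $u\in P$, $|L(w)|=2$ and $|L(v)|=3$. Among all such chords, choose one with $|V(G_{W,2})|$ minimum.

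First we fix the shape of $P_W$. By Lemma~\ref{lem:f-chords}, since $W$ is a $(1,2)$-$2$-chord, we have $|P\cap B(G_{W,2})|\ge 3$. The chord $W$ is feasible: $G'_{W,1}$ is a proper subgraph, because $G_{W,2}$ contains a boundary vertex outside $W\cup P$, as otherwise $d(w)<2$ or there would be a bad edge. So Lemma~\ref{lem:feasible} gives $|P_W|\ge 5$, and since $|P|\le 5$ we get $P_W=p_ap_{a+1}\dots uvw$ with exactly three vertices of $P$. The vertex $w'$ is not added, because $|L(w')|=2$ would make $ww'$ a bad edge. Thus $|P_W\cap P|=3$ and $|P_W|=5$.

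By Lemma~\ref{lem:badvertex}, $(G_{W,2},P_W,L')$ then has a bad vertex $x$ with $|L(x)|=3$. This $x$ has exactly one neighbor in $P_W\cap P$ and exactly one neighbor in $P_W-P=\{v,w\}$.

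We now split into cases according to $x$'s neighbor among $v,w$.

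\textbf{Case $x\sim v$.} Then $x$ and $v$ both have list size $3$. If $x$ is interior, this contradicts the sparsity of $X_{G,L}$, since the component $\{v\}$ would be a $K_2$ too close to $p$-neighbours. More precisely, $x$ and $v$ together with the edge $xv$ must be an isolated $K_2$ component, and we need to rule it out. In that situation $W'=p\,x\,v\,w$ would be a feasible $(1,2)$-$3$-chord. We would analyze it like the $3$-chord case in Lemma~\ref{lem:132chord}: find a bad vertex $y$, which must lie on the boundary by the distance condition, and obtain a separating $6$-cycle contradicting Lemma~\ref{lem:sep6}. If instead $x$ is a boundary vertex, then $xv w$ (respectively $p x$) is a $(3,2)$-$2$-chord or a $(1,3)$-$1$-chord, and Lemma~\ref{lem:f-chords} applies. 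The only exception is when the edges involved lie on $B$; in that configuration we expect a bad cycle or a separating $4$-, $5$- or $6$-cycle.

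\textbf{Case $x\sim w$.} If $x$ is a boundary vertex with $xw\notin E(B)$, then $xw$ is a $(2,3)$-$1$-chord. If $x$ is interior, then $p\,x\,w$ is a $(1,2)$-$2$-chord with $|L(x)|=3$ and smaller $G_{\cdot,2}$, contradicting the minimality of $W$. So $x=w'$ is the boundary neighbor of $w$, which is excluded because $|L(w')|$ would need to be $3$ while $x$ is adjacent to a $P$-vertex; the resulting configuration is then handled as below. Here $x$ is adjacent to some $p\in P$ and is not adjacent to $v$ (by the previous case). If $px$ is a chord, Lemma~\ref{lem:f-chords} or Lemma~\ref{lem:132chord} applies. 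Otherwise $P_W\cup\{x\}$ is a $6$-cycle, which by the degree bound of Lemma~\ref{lem:degree} must be separating. Lemma~\ref{lem:sep6} then forces interior vertices of list size $3$ adjacent to $v$ or close to $v$, contradicting $15$-sparsity.

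The main obstacle will be the case where $x$ is an interior vertex adjacent to $v$. That case produces a $3$-chord through two list-size-$3$ vertices, and we must rule out the resulting bad vertex $y$ by chasing $6$-cycles, exactly as in the last paragraph of the proof of Lemma~\ref{lem:132chord}. We would use Lemma~\ref{lem:233chord} where it applies.
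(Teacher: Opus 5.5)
\textbf{The gap is in your Case $x\sim w$, when the $P$-neighbor of the bad vertex is $u$ itself.} You dismiss a boundary bad vertex $x$ joined to some $p\in P$ by a chord with ``Lemma~\ref{lem:f-chords} or Lemma~\ref{lem:132chord} applies''. This fails when $p=u$.

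Each side of $W$ contains exactly three vertices of $P$, so $u=p_3$ is the middle vertex of $P$. Take $x=w'$ with $|L(w')|=3$; nothing excludes this, contrary to your remark. If $x$ is adjacent to $u$, then $p_3x$ is necessarily a chord: a $(1,3)$-$1$-chord with three vertices of $P$ on each side. None of your cited results disposes of it:
\begin{itemize}
\item Lemma~\ref{lem:f-chords} only forbids such $1$-chords when $|P\cap B(G_{W,2})|\le 2$.
\item Lemma~\ref{lem:132chord} concerns $2$-chords.
\item Chords at $p_3$ are only excluded later, in Lemma~\ref{lem:chords}, whose proof relies on Corollary~\ref{cor:122chord}, i.e.\ on the statement you are proving.
\end{itemize}

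This configuration is exactly what most of the paper's proof is about. The chord $W'=p_3x$ is feasible, so by Lemma~\ref{lem:feasible} $x$ has a boundary neighbor $x'$ with $|L(x')|=2$, and $P_{W'}=p_1p_2p_3xx'$. The new bad vertex $y$ cannot be interior. If it is adjacent to $x$ and to $p_1$ or $p_2$, that contradicts Lemma~\ref{lem:132chord}. If it is adjacent to $x'$ and $p_3$, it gives a $(1,2)$-$2$-chord with middle list size $3$ and a smaller side, contradicting the minimality of $W$. On the boundary, $y$ is either excluded via a separating $6$-cycle and Lemma~\ref{lem:sep6}, or it is the next list-size-$3$ boundary vertex after $x'$, again joined to $p_3$ by a chord. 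The situation then repeats with a strictly smaller side, and only this descent closes the case. Your proposal has nothing playing its role.

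\textbf{The interior case $x\sim v$ is also only sketched.} You flag it as the main obstacle. The paper sidesteps it by choosing the side. Since both sides of $W$ carry three vertices of $P$, either may serve as $G_{W,2}$:
\begin{itemize}
\item If $v$ has an interior list-size-$3$ neighbor $\tilde v$, put $\tilde v$ into $G_{W,1}$. Sparsity then forbids any interior bad vertex in $G_{W,2}$.
\item Otherwise $v$ is an isolated vertex of $G[X_{G,L}]$, and no interior bad vertex can be adjacent to $v$.
\end{itemize}
Your direct route via $W'=p_2xvw$ can be completed; the subcase $x\sim p_1$ dies at once because $|P_{W'}|=4$. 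But a separating $6$-cycle is not the only outcome, because the next bad vertex $y$ may be a boundary vertex adjacent to $x$:
\begin{itemize}
\item If $y$ is consecutive to $w$, then $wvxy$ is a worst cycle.
\item Otherwise $y$ is the boundary neighbor of $p_1$ outside $P$, and $wvxy$ is a feasible $(2,3)$-$3$-chord excluded by Lemma~\ref{lem:233chord}.
\end{itemize}
Finally, in your $6$-cycle argument for $x\sim w$, $x\sim p_1$, an interior vertex adjacent to $p_2$, $v$, $x$ does not violate sparsity by itself. It is excluded only because it is a bad vertex adjacent to $v$, i.e.\ by the interior $x\sim v$ case.
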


\begin{proof}
Suppose to the contrary that $W=uvw$ is a $(1,2)$-$2$-chord with $|L(v)|=3$. By Lemma~\ref{lem:f-chords}, $|P \cap B(G_{W,2})|=|P \cap B(G_{W,1})|\geq 2$. Since $|L(w)| = 2$ and $|P| \geq 3$, taking either side to be $G_{W,1}$ will make $W$ a feasible chord. In order for $|P_{W}| = 5$, we must have $|P| =5$ with $|P\cap G_{W,1}| = |P\cap G_{W,2}| = 3$. We always relabel $P$ such that $p_1,p_2,p_3$ are in $B(G_{W,2})$ with $u = p_3$ and $P_{W} = p_1p_2p_3vw$.

First assume that $v$ is adjacent to an interior vertex $\tilde{v} \in X_{G,L}$. Let $G_{W,1}$ be the side of $G$ containing $\tilde{v}$, and we may assume that $G_{W,2}$ has no internal vertex with list size $3$ that is adjacent to $P_W$ by the distance condition on $X_{G,L}$.

By Lemma~\ref{lem:badvertex}, there exists a vertex $x$ such that $|L(x)| = 3$ and $|N_{G}(x) \cap P_W| \geq 2$. Moreover, by our choice of $G_{W,2}$, this vertex $x$ lies on the boundary of $G_{W,2}$. Observe that $x$ cannot be adjacent to $v$ nor $p_2$ as otherwise $wvx$ is a $(2,3)$-$2$-chord or $p_2x$ is a $(1,3)$-$1$-chord with $|P\cap B(G_{W,2})| = 2$, respectively. Since $G$ has no bad vertices and $|N_{G}(x) \cap P_W| \geq 2$, $x$ must be adjacent to $w$. Furthermore, to avoid having a $(2,3)$-$1$-chord, the edge $wx$ must lie in $B(G_{W,2})$. If $x$ is also adjacent to $p_1$, then $p_1x$ must similarly lie in $B(G_{W,2})$; otherwise, $p_1x$ is a $(1,3)$-$1$-chord with $|P\cap B(G_{W,2})| = 1$ contradicting Lemma~\ref{lem:f-chords}. Observe that $P_{W} \cup \{x\}$ would then be a separating $6$-cycle which by Lemma~\ref{lem:sep6} would have an internal vertex with list size $3$ adjacent to $P_W$, which contradicts our choice of $G_{W,2}$. Thus, $x$ is adjacent to $w$ and $p_3$ with $wx$ an edge of $B(G_{W,2})$. 

Since $x$ is not adjacent to $p_1$, the chord $W' = p_3x$ is a feasible chord. In order for $|P_{W'}| = 5$, $x$ must be adjacent to a vertex $x'$ in $B(G_{W',2})$ with $|L(x')| = 2$ and $P_{W'} = p_1p_2p_3xx'$. By Lemma~\ref{lem:badvertex}, $G_{W',2}$ contains a vertex $y$ such that $|L(y)| = 3$ and $|N_{G}(y) \cap P_W| \geq 2$. By choice of $G_{W,2}$, $y$ must be a boundary vertex. A similar argument as above shows that $y$ must be adjacent to $x'$ and $p_3$ with $x'y$ being an edge of $B(G_{W',2})$. The chord $W'' = p_3y$ is once again feasible and $y$ must be adjacent to a vertex $y'$ in $B(G_{W'',2})$ with $|L(y')| = 2$. As this process is infinite, but $G$ is finite, we arrive at a contradiction.

Therefore, we may assume from here on out that $G$ has no $(1,2)$-$2$-chord with interior vertex having list size $3$ that is adjacent to another vertex with list size $3$. We now take $W$ to be a $(1,2)$-$2$-chord $W = uvw$ with $|L(v)| = 3$ such that $|V(G_{W,2})|$ is minimal. Recall that we relabel $P$ such that $p_1,p_2,p_3$ are in $B(G_{W,2})$ with $u = p_3$ and $P_{W} = p_1p_2p_3vw$.

As $W$ is feasible, there exists a vertex $x$ such that $|L(x)| = 3$ and $|N_{G}(x) \cap P_W| \geq 2$ by Lemma~\ref{lem:badvertex}. Note that $x$ cannot be an interior vertex as $\tilde{W} = wxp_i$ where $i = 1,2,3$ either contradicts the minimality of $W$ or is a feasible chord with $|P_{\tilde{W}}| \leq 4$. Thus, $x$ is a boundary vertex. This bad vertex cannot be adjacent to either $v$ or $p_2$ by Lemma~\ref{lem:f-chords}. If $x$ is adjacent to $w$ and $p_1$, then $wxp_1$ forms a path in $B(G)$ and $P_{W} \cup \{ x\}$ forms a separating $6$-cycle. However, the two scenarios in Lemma~\ref{lem:sep6} would either force $v$ to be adjacent to a vertex in $X_{G,L}$ or to be too close to a $K_2$ in $X_{G,L}$. Therefore, $x$ is adjacent to $w$ and $p_3$ with the edge $wx$ lying on the boundary of $G$.

Consider the chord $W' = p_3x$ which is feasible as $x$ is not adjacent to $p_1$. In order for $|P_{W'}| = 5$, $x$ must be adjacent to a vertex $x'$ in $B(G_{W',2})$ with $|L(x')| = 2$ and $P_{W'} = p_1p_2p_3xx'$. By Lemma~\ref{lem:badvertex}, $G_{W',2}$ contains a vertex $y$ such that $|L(y)| = 3$ and $|N_{G}(y) \cap P_W| \geq 2$. If $y$ is an interior vertex, it cannot be adjacent to $x$ by Lemma~\ref{lem:132chord}. Thus, $y$ must be adjacent to $x'$ and $p_3$; however, this contradicts the minimality of $W$. Hence, $y$ must be a boundary vertex. A similar argument as above shows that $y$ must be adjacent to $x'$ and $p_3$ with $x'y$ being an edge of $B(G_{W',2})$. The chord $W'' = p_3y$ is once again feasible and $y$ must be adjacent to a vertex $y'$ in $B(G_{W'',2})$ with $|L(y')| = 2$. As this process is infinite, but $G$ is finite, we arrive at a contradiction. This completes the proof of the lemma.
\end{proof}

We will use the following corollary of the Alon--Tarsi Theorem~\cite{Alon1992} for list colorings of bipartite graphs.

\begin{cor}[Alon--Tarsi~\cite{Alon1992}]
    Let $G$ be a bipartite graph with a list assignment $L$. If $G$ has an orientation $D$ such that $d_D^+(v) < |L(v)|$ for every vertex $v \in V(G)$, then $G$ is $L$-colorable.
\end{cor}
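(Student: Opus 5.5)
This is the Alon--Tarsi corollary for bipartite graphs, and I would derive it from the Combinatorial Nullstellensatz via the Alon--Tarsi theorem. Fix the orientation $D$ and set $\ell(v)=|L(v)|$. Consider the graph polynomial
\[
f_G(x)=\prod_{uv\in E(G),\,(u\to v)\in D}(x_u-x_v).
\]
An $L$-coloring is exactly a point $c\in\prod_v L(v)$ with $f_G(c)\neq 0$. Expanding, the monomial $\prod_v x_v^{d_D^+(v)}$ has total degree $|E(G)|=\deg f_G$. Its coefficient is, up to sign, the difference $EE(D)-EO(D)$. Here $EE(D)$ and $EO(D)$ count the spanning Eulerian subdigraphs of $D$ with an even and with an odd number of edges, respectively. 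This is the standard Alon--Tarsi bookkeeping: each orientation $D'$ of $G$ with the same out-degree sequence contributes $\pm1$ according to the parity of the number of edges it reverses relative to $D$. Those reversed edge sets are precisely the Eulerian subdigraphs of $D$.

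The key bipartite observation is that every Eulerian subdigraph decomposes into directed cycles. In a bipartite graph every cycle has even length, so every Eulerian subdigraph has an even number of edges. Hence $EO(D)=0$, while $EE(D)\ge 1$ because the empty subdigraph counts. So the coefficient of $\prod_v x_v^{d_D^+(v)}$ is nonzero.

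Since $d_D^+(v)<\ell(v)=|L(v)|$ for every $v$, the Combinatorial Nullstellensatz applies with sets $S_v=L(v)$ and exponents $t_v=d_D^+(v)$. It yields $c\in\prod_v L(v)$ with $f_G(c)\ne 0$, which is a proper $L$-coloring.

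The only step needing care is the coefficient identity. I would present it as citing \cite{Alon1992} directly rather than reproving it, since the statement is labelled a corollary of that theorem. Then the whole proof reduces to the two-line observation $EO(D)=0$ for bipartite $G$.
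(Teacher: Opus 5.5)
Your proposal is correct and is essentially the intended argument. The paper gives no proof of its own and simply cites Alon and Tarsi. Your deduction is exactly how this corollary follows from the Alon--Tarsi theorem: the coefficient of $\prod_v x_v^{d_D^+(v)}$ is $EE(D)-EO(D)$, which equals $EE(D)\ge 1$ because every Eulerian subdigraph of a bipartite graph has an even number of edges, and the Combinatorial Nullstellensatz with $S_v=L(v)$ then gives the coloring.
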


\begin{lem}\label{lem:122chord2} 
    There is no $(1,2)$-$2$-chord $uvw$ with $|L(v)| = 4$.
\end{lem}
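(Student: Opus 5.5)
We argue by contradiction, following the template of Lemma~\ref{lem:122chord1}. Suppose $W=uvw$ is a $(1,2)$-$2$-chord with $|L(v)|=4$. By Lemma~\ref{lem:f-chords}, both sides of $W$ contain at least two vertices of $P$. As in Lemma~\ref{lem:122chord1}, either choice of side makes $W$ feasible, and Lemma~\ref{lem:feasible} forces $|P|=5$. We label $P$ so that $u=p_3$, $p_1,p_2,p_3\in B(G_{W,2})$ and $p_3,p_4,p_5\in B(G_{W,1})$, giving $P_W=p_1p_2p_3vw$ on both sides.

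The difference from Lemma~\ref{lem:122chord1} is that $v$ now has list size $4$. A bad vertex $x$ of $(G_{W,2},P_W,L')$ could therefore be an interior neighbour of $v$ without immediately producing a $(2,3)$-$2$-chord. So the argument will not peel one side at a time. Instead we colour a small bipartite piece around $W$ directly and use the Alon--Tarsi corollary.

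The first step is to apply Lemma~\ref{lem:badvertex} to each side $i\in\{1,2\}$. This gives a bad vertex $x_i$ in $G_{W,i}$ with $|L(x_i)|=3$ and at least two neighbours on $P_W$. Exactly one of these neighbours is in $P$ and one is in $\{v,w\}$. We then eliminate possibilities using the chord lemmas already proved:
\begin{itemize}
\item $x_iw\in E(G)$ must be a boundary edge, since otherwise it is a $(2,3)$-$1$-chord;
\item $x_i$ cannot be adjacent to $p_2$ or $p_4$, since that gives a $(1,3)$-$1$-chord or a short feasible $2$-chord, contradicting Lemma~\ref{lem:f-chords} or Lemma~\ref{lem:132chord};
\item the separating-cycle cases are handled by Lemma~\ref{lem:sep6} together with the $15$-sparsity of $X_{G,L}$, exactly as before;
\item where possible, we iterate along chords $p_3x_i$ as in Lemma~\ref{lem:122chord1} to reach an infinite-descent contradiction.
\end{itemize}
After this elimination, the surviving configuration should have, on each side, a vertex $x_i$ adjacent to $v$ and to $p_1$ (respectively $p_5$) or $p_3$. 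Here $x_i$ is interior, or lies on the boundary with $|L(x_i)|=3$.

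In the main step we handle the surviving configuration by recolouring $v$. Delete $v$, or equivalently uncolour it, and treat $H=G[\{v,x_1,x_2\}]$. This is a star centred at $v$, so it is bipartite. Each leaf loses at most one colour to its $P$-neighbour, so $|L(x_i)|\ge 2$ after removing precoloured colours. The centre $v$ loses at most two colours, one from $u$ and one from $w$. We colour $G-\{v,x_1,x_2\}$ on each side by minimality. This is valid because those subtargets have no bad vertices once $x_i$ is removed, which needs checking. Then, for the remaining star $H$, we orient both edges into $v$. This gives out-degree $1<2$ at each leaf and out-degree $0<|L(v)|-\deg_{\text{col}}$ at $v$. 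The Alon--Tarsi corollary then extends the colouring. Since $v$ has a list of size $4$ and at most two precoloured neighbours plus the coloured remainder, the orientation inequalities are what we need to verify; we may need to orient so that $v$ keeps out-degree at most $|L_{\text{rem}}(v)|-1$.

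The main obstacle is making the two sides' colourings compatible. Each side is coloured with $v$ (and $w$) precoloured, and the precolouring of $v$ is exactly what we want to leave free. My first attempt will be to precolour only $P\cup\{w\}$ and treat $v$ as a list-$4$ vertex of degree-$2$ attachment. Then we check that the $4$-vertex paths $p_1p_2p_3\dots$ and $w\dots$ give valid targets on each side with $|P|\le 4$, using Remark~\ref{rmk:p2}.
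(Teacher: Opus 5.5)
Your proposal has a genuine gap at both of its load-bearing steps.

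\textbf{The structural reduction.} The reduction to a ``surviving configuration'' is asserted rather than derived, and the configuration you predict is not the one that survives.
\begin{itemize}
\item A bad vertex $x_i$ adjacent to $v$ and $p_3=u$ cannot exist, since $x_ivp_3$ would be a triangle.
\item An interior $x_i$ adjacent to $v$ and $p_1$ is excluded, because $wvx_ip_1$ is then a feasible chord with $|P_{\tilde W}|=4$.
\item A boundary $x_i$ adjacent to $v$ gives the $(2,3)$-$2$-chord $wvx_i$.
\item Your second bullet (no adjacency to $p_2$ or $p_4$) is valid only for boundary $x_i$. If $x_i$ is interior and adjacent to $v$ and $p_2$, then $p_2x_iv$ is not a chord at all. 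This is exactly the case that survives.
\end{itemize}
Most of the paper's proof is spent on that case. It first uses the $15$-sparsity of $X_{G,L}$ to choose $G_{W,2}$ as the side with no $X_{G,L}$-edge near $P\cup W$. It disposes of a boundary bad vertex by the infinite descent along chords $p_3x$, as in Lemma~\ref{lem:122chord1}. It then analyses the chords $wvxp_2$ and $wvxy$ to show that $G_{W,2}$ is $P_W$ plus exactly two vertices:
\begin{itemize}
\item an interior $x$ with $N(x)=\{v,p_2,y\}$;
\item a boundary vertex $y$ with $|L(y)|=3$, adjacent to $x$, $p_1$ and $w$.
\end{itemize}
The same argument on the other side, with mirror vertices $\tilde x,\tilde y$, pins $G$ down to a single explicit $11$-vertex graph.

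\textbf{The star extension.} Your local extension on the star $\{v,x_1,x_2\}$ fails even on that explicit graph.
\begin{itemize}
\item Minimality cannot be applied as you suggest. The sides of $G-v$ with $P\cup\{w\}$ precoloured are not targets, because the precoloured set is no longer a path.
\item More seriously, your count that each leaf loses only the colour of its $P$-neighbour ignores that $d(x_i)\ge 3$. In the real configuration $x$ is also adjacent to $y$, which you would already have coloured. So $x$ and $\tilde x$ may each keep a single colour.
\item Meanwhile $v$ keeps only $4-2=2$ colours once $p_3$ and $w$ are coloured. If the single colours left at $x$ and $\tilde x$ are exactly $v$'s two remaining colours, the extension is impossible.
\item Correspondingly, no orientation of the star meets the Alon--Tarsi inequalities. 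Leaves with one colour need out-degree $0$, which forces out-degree $2$ at a vertex with two colours.
\end{itemize}
The colours of $w,y,\tilde y$ have to be chosen jointly with those of $v,x,\tilde x$.

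The paper does this by applying the Alon--Tarsi corollary to all of the bipartite graph $G-E(P)$. The vertices of $P$ are sinks, and the remaining edges are oriented as
\[
v\to x,\quad v\to\tilde x,\quad v\to p_3,\quad w\to v,\quad x\to y\to w,\quad \tilde x\to\tilde y\to w,
\]
with each of $x,y,\tilde x,\tilde y$ also sending its remaining edge into $P$. This gives out-degree $3<4$ at $v$, $1<2$ at $w$, and $2<3$ at each of $x,y,\tilde x,\tilde y$.
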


\begin{proof}
    Suppose to the contrary that $W=uvw$ is a $(1,2)$-$2$-chord with $|L(v)|=4$. By Lemma~\ref{lem:f-chords}, $3 \leq |P \cap B(G_{W,2})| \leq |P \cap B(G_{W,1})|$, and therefore, $|P| =5$ with $|P\cap G_{W,1}| = |P\cap G_{W,2}| = 3$. As $|L(w)| = 2$ and each side contains three vertices of $P$, taking either side to be $G_{W,1}$ makes $W$ a feasible chord. By the definition of $X_{G,L}$, only one side of $W$ can have adjacent internal vertices $r,s \in X_{G,L}$ such that $r$ is adjacent to $P\cup W$. We may set $G_{W,1}$ to be the side having such internal vertices if they exist, and we may assume that if $t \in X_{G,L}\cap G_{W,2}$ is adjacent to $P_{W}$, then $t$ is not adjacent to any other vertex in $X_{G,L}$. We relabel $P$ such that $p_1,p_2,p_3$ are in $B(G_{W,2})$ with $u = p_3$ and $P_{W} = p_1p_2p_3vw$. By Lemma~\ref{lem:badvertex}, $G_{W,2}$ contains a bad vertex $x$ such that $|L(x)| = 3$ and $x$ has exactly one neighbor in each of the sets $P_W \cap P$ and $P_{W}-P$. We break into cases based on whether or not $G_{W,2}$ contains a bad vertex in its interior.
    
    \textbf{Case 1:} Assume that $G_{W,2}$ does not contain a bad vertex in its interior. This implies that $x$ is a boundary vertex. By Lemma~\ref{lem:f-chords}, $x$ is not adjacent to $v$ nor to $p_2$. If $x$ is adjacent to both $w$ and $p_1$, then $wxp_1$ forms a path on the boundary and $P_{W} \cup \{x\}$ forms a separating $6$-cycle. By our choice of $G_{W,2}$, the interior of this separating $6$-cycle contains a single vertex adjacent to $v$, $x$, and $p_2$. However, this contradicts $G_{W,2}$ not containing a bad vertex in its interior. Thus, $x$ is adjacent to $w$ and $p_3$ with $wx$ being a boundary edge. 
    
    Consider the chord $W' = xp_3$ which is feasible as $x$ is not adjacent to $p_1$. In order for $|P_{W'}| = 5$, $x$ must be adjacent to a vertex $x'\not= w$ in $B(G)$ with $|L(x')| = 2$.  By Lemma~\ref{lem:badvertex}, $G_{W',2}$ contains a bad vertex $y$ such that $|L(y)| = 3$ and $y$ has exactly one neighbor in each of the sets $P_{W'}\cap P$ and $P_{W'}-P$. Lemma~\ref{lem:132chord} and Lemma~\ref{lem:122chord1} imply $y$ is not an interior vertex and must therefore lie on the boundary. By Lemma~\ref{lem:f-chords}, $y$ is not adjacent to $x$ nor to $p_2$. If $y$ is adjacent to $x'$ and $p_1$, then $P_{W'} \cup \{y\}$ is a separating $6$-cycle. However, by Lemma~\ref{lem:sep6}, this either creates a bad cycle or contradicts our choice of $G_{W,2}$. Therefore, $y$ is adjacent to $x'$ and $p_3$ with $x'y$ being a boundary edge. Since $y$ is not adjacent to $p_1$, the chord $W'' = yp_3$ is feasible. As this process is infinite, but $G$ is finite, we arrive at a contradiction.

    \textbf{Case 2:} Assume that $G_{W,2}$ contains a bad vertex in its interior. We may therefore assume $x$ is in the interior. By Lemma~\ref{lem:122chord1}, $x$ is not adjacent to $w$ and must therefore be adjacent to $v$. Note that $x$ cannot be adjacent to both $v$ and $p_1$ as this would create a chord $\tilde{W} = wvxp_1$ with $|P_{\tilde{W}}| = 4$. Thus, $x$ is adjacent to $v$ and $p_2$, and we may consider the feasible chord $W' = wvxp_2$. 

    By Lemma~\ref{lem:badvertex}, $G_{W',2}$ contains a vertex $y$ such that $|L(y)| = 3$ and $|N_{G}(y) \cap P_{W'}| \geq 2$. If $y$ is on the interior, it is not adjacent to $x$ by choice of $G_{W,2}$ and is not adjacent to $w$ by Lemma~\ref{lem:122chord1}. This would imply that $y$ is adjacent to $v$ and one of $p_1$ or $p_2$; however, this creates a separating $4$- or $5$-cycle containing the vertex $x$. Therefore, $y$ is a boundary vertex.
    
    We claim that $y$ is adjacent to $x$. If not, $y$ is adjacent to $w$ and $p_1$ by Lemma~\ref{lem:f-chords} and $P_{W'}\cup \{y\}$ forms a separating $6$-cycle. However, this either contradicts our choice of $G_{W,2}$ or creates a $5$-cycle that contains the vertex $x$. Thus, $y$ is adjacent to $x$. If $y$ is not adjacent to $p_1$ on the boundary, then $p_2xy$ forms a feasible $(1,3)$-$2$-chord that contradicts Lemma~\ref{lem:132chord}. Therefore, $y$ is adjacent to $x$ and $p_1$.

    Assume that $y$ is adjacent to $x$ and $p_1$ but is not adjacent to $w$. We consider the chord $W'' = wvxy$. Since we assume $y$ is not adjacent to $w$, the chord $W''$ is feasible, which implies that the vertex $y$ is adjacent to some $y' \in B(G)$ such that $|L(y')| = 2$. Moreover, there exists some vertex $z$ in $G_{W'',2}$ such that $|L(z)| = 3$ and $|N_{G}(z) \cap P_{W''}| \geq 2$. 
    
    If $z$ is an interior vertex, then by our choice of $G_{W,2}$, $z$ is not adjacent to $x$. In order to avoid creating a $(2,2^+)$-$2$-chord, $z$ is not adjacent to $w$. If $z$ is adjacent to $v$ and $y'$, the chord $wvzy'$ would be a feasible chord that contradicts Lemma~\ref{lem:feasible}. Thus, $z$ is adjacent to $v$ and $y$. Consider now the feasible chord $W''' = wvzy$. As before, there exists some vertex $\alpha$ in $G_{W''',2}$ such that $|L(\alpha)| = 3$ and $|N_{G}(\alpha) \cap P_{W'''}| \geq 2$. By the same logic applied to the vertex $z$, if $\alpha$ is an interior vertex, then it must be adjacent to $v$ and $y$. However, this creates a separating $4$-cycle $vxy\alpha$ that contains the vertex $z$, a contradiction. Therefore, $\alpha$ lies on the boundary. By Lemma~\ref{lem:f-chords}, $\alpha$ is not adjacent to $v$ nor adjacent to $y$. Furthermore, $\alpha$ is not adjacent to $z$, because either $zyy'\alpha$ is a bad cycle or $yz\alpha$ is a feasible $(3,3)$-$2$-chord that contradicts Lemma~\ref{lem:332chord}. This implies $\alpha$ is adjacent to $w$ and $y'$ and $P_{W'''}\cup \{\alpha\}$ forms a separating $6$-cycle. However, this either creates a bad cycle or contradicts our choice of $G_{W,2}$. Hence, $z$ is a boundary, but this cannot happen by the same logic applied when proving $\alpha$ could not be a boundary vertex.

     Therefore, $y$ is adjacent to $x$, $p_1$, and $w$, and $V(G_{W,2}) = V(P_{W})\cup \{x, y\}$. Since $x \in X_{G,L}$ and is adjacent to $P_{W}$, $G_{W,1} \cap X_{G,L}$ has no adjacent vertices that are also adjacent to a vertex in $P\cup W$. We may then apply the same logic to $G_{W,1}$ such that $G_{W,1}$ is isomorphic to $G_{W,2}$ with the same list sizes, see Figure~\ref{fig:122chord}. It remains to show that $G-E(P)$ is list-colorable. Since $G-E(P)$ is bipartite, it suffices by the Alon--Tarsi Theorem to find an orientation of the edges of $G-E(P)$ such that the outdegree of each vertex is strictly less than its list size. Such an orientation is given in Figure~\ref{fig:122chord}. This completes the proof of the lemma.
\end{proof}

\begin{figure}[!htbp]
\centering
\begin{subfigure}[b]{.45\linewidth}
\centering
\begin{tikzpicture}[scale = .9]

\draw circle (2);

\node[bdot, label={below:$p_1$}] at (-30:2) (p1) {};
\node[bdot, label={below:$p_2$}] at (-60:2) (p2) {};
\node[bdot, label={below:$p_3$}] at (270:2) (p3) {};
\node[bdot, label={below:$p_4$}] at (240.5:2) (p4) {};
\node[bdot, label={below:$p_5$}] at (210:2) (p5) {};
\node[wdot, label={above:$w$}] at (90:2) (w) {};

\node[triangle, label={right:$y$}] at (0:2) (y) {};
\node[triangle, label={above: $x$}] at (0:1) (x) {};
\node[triangle, label={left:$\tilde{y}$}] at (180:2) (yt) {};
\node[triangle, label={above: $\tilde{x}$}] at (180:1) (xt) {};
\node[square, label={[label distance = -.1cm]135:$v$}] at (0:0) (v) {};

\draw[-] (yt)--(xt)--(v)--(x)--(y);
\draw[-] (p4)--(xt);
\draw[-] (p2)--(x);
\draw[-] (p3)--(v)--(w);

\end{tikzpicture}
\end{subfigure}
\begin{subfigure}[b]{.45\linewidth}
\centering
\begin{tikzpicture}[scale = .9, decoration={markings, mark=at position 0.5 with {\arrow{>}}}]

\draw[postaction={decorate}, line width=.5pt] (180:2) arc (180:90:2);
\draw[postaction={decorate}, line width=.5pt] (0:2) arc (0:90:2);

\draw[postaction={decorate}, line width=.5pt] (180:2) arc (180:210:2);
\draw[postaction={decorate}, line width=.5pt] (0:2) arc (0:-30:2);

\node[bdot, label={below:$p_1$}] at (-30:2) (p1) {};
\node[bdot, label={below:$p_2$}] at (-60:2) (p2) {};
\node[bdot, label={below:$p_3$}] at (270:2) (p3) {};
\node[bdot, label={below:$p_4$}] at (240.5:2) (p4) {};
\node[bdot, label={below:$p_5$}] at (210:2) (p5) {};

\node[wdot, label={above:$w$}] at (90:2) (w) {};
\node[triangle, label={right:$y$}] at (0:2) (y) {};
\node[triangle, label={above: $x$}] at (0:1) (x) {};
\node[triangle, label={left:$\tilde{y}$}] at (180:2) (yt) {};
\node[triangle, label={above: $\tilde{x}$}] at (180:1) (xt) {};
\node[square, label={[label distance = -.1cm]135:$v$}] at (0:0) (v) {};

\draw[postaction={decorate}, line width=.5pt] (xt)--(yt);
\draw[postaction={decorate}, line width=.5pt] (v)--(xt);
\draw[postaction={decorate}, line width=.5pt] (v)--(x);
\draw[postaction={decorate}, line width=.5pt] (x)--(y);
\draw[postaction={decorate}, line width=.5pt] (w)--(v);

\draw[postaction={decorate}, line width=.5pt] (xt)--(p4);
\draw[postaction={decorate}, line width=.5pt] (v)--(p3);
\draw[postaction={decorate}, line width=.5pt] (x)--(p2);
\end{tikzpicture}
\end{subfigure}
\caption{Graph $G$ whose subgraph $G-E(P)$ is manually proven to be list-colorable in Lemma~\ref{lem:122chord2} by imparting the orientation seen on the right and applying the Alon--Tarsi Theorem.}
\label{fig:122chord}
\end{figure}

\begin{cor}\label{cor:122chord} 
    There is no $(1,2)$-$2$-chord.
\end{cor}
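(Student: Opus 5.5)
The corollary should follow directly from Lemmas~\ref{lem:122chord1} and~\ref{lem:122chord2}, once we check that these two lemmas cover every possible list size of the middle vertex. Let $W=uvw$ be a $(1,2)$-$2$-chord, with $|L(u)|=1$ and $|L(w)|=2$. By definition, $W$ is internally disjoint from $B$, so $v$ is an interior vertex of $G$.

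By the definition of a target, every vertex $v\in V(G)\setminus V(B)$ satisfies $3\le |L(v)|\le 4$. So either $|L(v)|=3$ or $|L(v)|=4$.

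If $|L(v)|=3$, then $W$ is excluded by Lemma~\ref{lem:122chord1}. If $|L(v)|=4$, then $W$ is excluded by Lemma~\ref{lem:122chord2}. In both cases we reach a contradiction, so no $(1,2)$-$2$-chord exists.

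There is no real obstacle here. The only point to verify is that $v$ cannot lie on $B$, which would allow it a list of size $1$ or $2$; this is guaranteed because an $i$-chord is internally disjoint from $B$. The substantive work has already been done in the two preceding lemmas.
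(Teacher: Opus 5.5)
Your proposal is correct and matches the paper, which states this corollary without a separate proof. The reason is the one you give: the middle vertex of a $2$-chord is interior and so has list size $3$ or $4$, and Lemmas~\ref{lem:122chord1} and~\ref{lem:122chord2} cover exactly these two cases.
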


\begin{lem}\label{lem:112chord}
    There is no $(1,1)$-$2$-chord.
\end{lem}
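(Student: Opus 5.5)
We argue by contradiction. Suppose $W=uvw$ is a $(1,1)$-$2$-chord, so $u=p_i$ and $w=p_j$ are vertices of $P$ and $v\notin P$. Since $G$ is $C_3$-free, $u$ and $w$ are not adjacent on $P$, so $|i-j|\ge 2$. The chord $W$ splits $G$ into $G_{W,1}$ and $G_{W,2}$.

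\textbf{Step 1: $|L(v)|\neq 2$.} If $v$ is a boundary vertex, then $|L(v)|\ge 2$. If $|L(v)|=2$, then $uv$ is a $(2,1^+)$-$1$-chord unless $uv$ lies on $B$, and likewise for $wv$. If both $uv$ and $vw$ are boundary edges, then $v$ is consecutive to $P$ on both sides, and $P\cup\{v\}$ closes a cycle. Either $B=P\cup\{v\}$, which is excluded by the degree condition in Lemma~\ref{lem:degree} together with Lemma~\ref{lem:sep45}, or we are in the situation of Remark~\ref{rmk:p2}. So we may assume $|L(v)|\in\{3,4\}$, and the vertices of $P$ strictly between $u$ and $w$ lie in one side, say $G_{W,2}$.

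\textbf{Step 2: reduce to the extreme configuration.} Lemma~\ref{lem:f-chords} forbids a $(1,1^+)$-$2$-chord with $|P\cap B(G_{W,2})|=1$, so both sides contain at least two vertices of $P$. Since $u,w\in P$ and $|P|\le 5$, the path $P_W$ on either side consists of the vertices of $P$ on that side together with $v$ and possibly a list-size-$2$ neighbour. If some side has $|P_W|\le 4$, then $W$ is feasible (taking the other side as $G_{W,1}$) and Lemma~\ref{lem:feasible} gives a contradiction. This leaves essentially one configuration: $|P|=5$, $W=p_1vp_5$ (or $W=p_1vp_4$ with $p_5$ adjacent to a list-size-$2$ vertex), and $P_W=P\cup\{v\}$ closing a $6$-cycle $p_1p_2p_3p_4p_5v$. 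One side of this cycle contains the rest of the graph, and on that side $P_W$ together with $v$ forms a cycle rather than a path.

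\textbf{Step 3: apply the separating-cycle lemmas.} On the side that does not contain the rest of $B$, the closed cycle $C=p_1p_2p_3p_4p_5v$ is either a face or a separating $6$-cycle. If it is separating, Lemma~\ref{lem:sep6} restricts its interior to a single $X_{G,L}$ vertex adjacent to three alternate vertices of $C$, or to an adjacent pair $x,y$ in $X_{G,L}$. In the first case that vertex is adjacent to three vertices of $P$, hence is a bad vertex of $(G,P,L)$, a contradiction. In the second case, $x$ and $y$ together have two neighbours in $P$, so one of them is adjacent to $p_i$ and $p_{i+2}$. With $|L|=3$ this makes it a bad vertex, since a bad vertex for $|P|=5$ needs $|N\cap P|\ge 2$. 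The remaining possibility is a pair $x,y$ with, for example, $N(x)\ni p_1,p_3$, which is again bad. So $C$ bounds a face.

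Then $v$ lies on the outer side, and we obtain a coloring by a direct argument. Color $G_{W,1}$ with $v$ added to the precolored path. The new path is $p_1vp_5$ extended by boundary neighbours, which has at most $5$ vertices. Valid-target checks follow exactly as in Lemmas~\ref{lem:badvertex} and~\ref{lem:122chord1}, using Lemmas~\ref{lem:332chord}, \ref{lem:132chord} and Corollary~\ref{cor:122chord} to exclude the possible bad vertices. We need to recolor only if $v$'s color clashes, and $|L(v)|\ge 3$ minus the two colors of $p_1,p_5$ leaves a choice.

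\textbf{Main obstacle.} The hard step is this last part: after precoloring $v$, the new precolored path on the outer side may have $5$ vertices and create a bad vertex $x$ with two neighbours among $p_1,v,p_5$ and their boundary neighbours. I would handle it as in Lemma~\ref{lem:122chord1}. Each such $x$ produces a new feasible $1$- or $2$-chord, either forbidden by the earlier lemmas or strictly shrinking $G_{W,2}$, so the iteration cannot continue indefinitely. If the configuration closes up completely, fall back on an Alon--Tarsi orientation as in Lemma~\ref{lem:122chord2}.
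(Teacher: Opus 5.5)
Your Step 2 reduction is backwards, so the configurations that actually require work are never treated. $P_W$ has to be computed on the side of $W$ that contains the rest of $B(G)$. The other side is bounded by the cycle $p_i\cdots p_j v$, where $(P\cup W)\cap B$ is a whole cycle rather than a path, so that side cannot play the role of $G_{W,2}$.

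On the correct side, a list-size-$2$ neighbour of $p_1$ or $p_k$ cannot be added to $P_W$, because it would be a bad vertex of $(G,P,L)$. Hence $P_W=p_1\cdots p_i\,v\,p_j\cdots p_k$ and $|P_W|=k-(j-i)+2$. Lemma~\ref{lem:feasible} then forces $k=5$ and $j-i=2$, that is, $W=p_1vp_3$ (or $p_3vp_5$) or $W=p_2vp_4$.

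Your claimed residual case $W=p_1vp_5$ is the opposite extreme. There the outer side has $P_W=p_1vp_5$ with $|P_W|=3$, so your own criterion together with Lemma~\ref{lem:feasible} disposes of it at once. Likewise $W=p_1vp_4$ gives $|P_W|=4$, and a list-size-$2$ neighbour of $p_5$ would be a bad vertex. Your separating-$6$-cycle analysis in Step 3 is therefore sound but irrelevant. Also, in Step 1, the middle vertex of a $2$-chord is interior by definition, so $|L(v)|\in\{3,4\}$ holds trivially. Moreover $|L(v)|=3$ is excluded because $v$ would then be a bad vertex.

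In the genuine cases $j-i=2$, Lemma~\ref{lem:feasible} gives nothing and $|L(v)|=4$. Lemma~\ref{lem:badvertex} produces a vertex $y$ with $|L(y)|=3$ adjacent to $v$ and to exactly one vertex of $P$. From here the paper needs a long case analysis (its Cases 1--3). It repeatedly forms new feasible chords through $v$ and the newly found list-size-$3$ vertices. It rules out interior and boundary placements using Lemmas~\ref{lem:f-chords}, \ref{lem:233chord}, \ref{lem:332chord}, \ref{lem:132chord}, Corollary~\ref{cor:122chord} and the sparsity of $X_{G,L}$. The analysis ends at two kinds of terminal structures:
\begin{itemize}
\item fans $p_1y_0y_0^+y_1\cdots y_k$ of boundary vertices with alternating list sizes $3,2$, all adjacent to $v$;
\item a separating $6$-cycle whose interior is an edge $w\tilde w$ of $G[X_{G,L}]$.
\end{itemize}
These are then colored by explicit Alon--Tarsi orientations.

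Your ``main obstacle'' paragraph only gestures at an iteration ``as in Lemma~\ref{lem:122chord1}'' and an Alon--Tarsi fallback, and it does so for the wrong configuration. It does not identify the terminal structures or explain why each new bad vertex is either excluded or confined to a strictly smaller region. As it stands, the proposal does not prove the lemma.
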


\begin{proof}
    Suppose to the contrary that there exists a $(1,1)$-$2$-chord $W = p_ixp_j$ with $i < j$. Write $P = p_1p_2\cdots p_k$, where $k = |P|$. Since $G$ is $C_3$-free, we have $j - i \ge 2$. Note that $W$ is feasible because $G$ is $C_3$-free and contains no separating $4$- or $5$-cycle.
    
    We first determine the possible positions of $p_i$ and $p_j$ on $P$. By the definition of $P_W$, apart from the vertices of $P$ and the internal vertex $x$ of $W$, an additional boundary vertex can belong to $P_W$ only if it is consecutive to an end of $P$ and has list size 2. However, no such vertex exists. Indeed, if $k = 5$, then a vertex with list size 2 adjacent to an end of $P$ would be a bad vertex; if $3 \le k \le 4$, then a vertex with list size 2 adjacent to an end of $P$ would again be a bad vertex. Since $(G,P,L)$ is a valid target, neither situation can occur. Consequently, $P_W = p_1\cdots p_i x p_j \cdots p_k$, and hence $|P_W| = k - (j - i) + 2$. By Lemma \ref{lem:feasible}, $|P_W| \ge 5$. Since $j - i \ge 2$ and $k \le 5$, we obtain $5 \le |P_W| = k - (j-i)+2 \le k \le 5$. Therefore, $k = 5$ and $j - i = 2$. Reordering the vertices of $P$, we may assume that $W = p_1xp_3$ or $W = p_2xp_4$.

    Since $x$ is an interior vertex, $|L(x)| \in \{3,4\}$. Moreover, $x$ has two neighbors in $P$. If $|L(x)| = 3$, then $x$ is a bad vertex of $(G,P,L)$, a contradiction. Hence $|L(x)| = 4$. Since $W$ is feasible and $|P_W| = 5$, Lemma \ref{lem:badvertex} implies that $(G_{W,2}, P_W, L')$ has a bad vertex $y$ with $|L(y)| = 3$ and $|N_G(y) \cap P_W| \ge 2$. Since $y$ is not a bad vertex of $(G,P,L)$, we have $|N_G(y) \cap P| \le 1$. Moreover, $P_W - P = \{x\}$. It follows that $y$ is adjacent to $x$ and to exactly one vertex in $P_W \cap P$.

    \textbf{Case 1:} Assume that $W = p_1xp_3$. If $y$ is a boundary vertex, then $W' = p_1xy$ forms a $(1,3)$-$2$-chord $W'$ with $|P\cap B(G_{W',2})| = 1$, which contradicts Lemma~\ref{lem:f-chords}. Thus, $y$ is an interior vertex that is adjacent to $x$ and exactly one of $p_4$ and $p_5$. If $y$ is adjacent to $p_5$, then $W' = p_1xyp_5$ is a feasible chord in order to meet the degree conditions of $y$ without creating a separating $5$-cycle. However, $|P_{W'}| = 4$ contradicting Lemma~\ref{lem:feasible}. Hence, $y$ is adjacent to $x$ and $p_4$, and we set $W^{(1)} = p_1xyp_4$. To avoid $p_3xyp_4$ being a separating $4$-cycle, $W^{(1)}$ is a feasible chord with $P_{W^{(1)}} = p_1xyp_4p_5$. By Lemma~\ref{lem:badvertex}, there exists a vertex $z$ such that $|L(z)| = 3$ and $|N_{G}(z) \cap P_{W^{(1)}}| \geq 2$. 
    
    If $z$ is an interior vertex, then $z$ is not adjacent to $x$ and $p_j$ for $j = 4,5$ as this would create a separating $4$- or $5$-cycle containing $y$. Thus, $z$ is adjacent to $y$. Observe that $z$ cannot be adjacent to $p_1$ as $p_1xyz$ would be a separating $4$-cycle for a neighbor of the vertex $x$. Thus, $W^{(2)} = p_1xyzp_5$ is a chord that must be feasible to meet the degree requirements of $x$ and $z$. There must then exist a bad vertex $w$ in $G_{W^{(2)},2}$ with $|L(w)| = 3$ and $|N_{G}(w) \cap  P_{W^{(2)}}| \geq 2$. By the distance condition on $X_{G,L}$, $w$ lies on the boundary. This bad vertex is not adjacent to $x$ nor to $z$, since this would create a $(1,3)$-$2$-chord satisfying the conditions in Lemma~\ref{lem:f-chords}. Moreover, $w$ is not adjacent to $y$ and $p_i$ where $i = 1,5$ as this would create a separating $4$-cycle containing either a neighbor of $x$ or the vertex $z$. Therefore, $w$ is adjacent to both $p_1$ and $p_5$ with $p_1wp_5$ being a path in $B(G)$. This creates a separating $6$-cycle $p_1wp_5zyx$, which contradicts the distance condition of $X_{G,L}$.

    If $z$ is a boundary vertex, then $z$ cannot be adjacent to $x$ as $p_1xz$ would form a $(1,3)$-$2$-chord $W'$ with $|P\cap B(G_{W',2})| = 1$. In addition, $z$ cannot be adjacent to $y$ and $p_1$ as $p_1xyz$ would be a separating $4$-cycle containing a neighbor of $x$. Thus, $z$ must be adjacent to $y$ and $p_5$ with $z$ consecutive with $p_5$ on the boundary. The chord $W^{(2)} = p_1xyz$ is feasible due to the degree constraints on $x$. Since $|P_{W^{(2)}}|$ must be $5$, we have $|L(z^{-})| = 2$ where $z^{-} \not= p_5$ is the other consecutive neighbor of $z$ on the boundary of $G$. By Lemma~\ref{lem:badvertex}, there exists a vertex $w$ in $G_{W^{(2)},2}$ with $|L(w)| = 3$ and $|N_{G}(w) \cap  P_{W^{(2)}}| \geq 2$.
    
    Assume that $w$ lies on the boundary. Note that $w$ cannot be adjacent to $x$ or $z$ by Lemma~\ref{lem:f-chords}. If $w$ is adjacent to $y$, then it cannot be adjacent to $z^-$ as either $wz^-zy$ is a bad cycle or $wz^-$ is a $(2,3)$-$1$-chord. However, this implies that if $w$ is adjacent to $y$, the chord $wyz$ is a feasible $(3,3)$-$2$-chord with $|L(y)| = 3$ contradicting Lemma~\ref{lem:332chord}. Thus, $w$ is adjacent to $p_1$ and $z^-$. To meet the degree conditions of $w$, $P_{W^{(2)}} \cup \{w\}$ is a separating $6$-cycle; however, this either creates a bad cycle or breaks the distance condition on $X_{G,L}$. Thus, $w$ is an interior vertex.

    Assume that $w$ is not adjacent to the vertex $y$. Note that $w$ cannot be adjacent to $p_1$ as both $p_1wz$ and $p_1wz^-$ would be $(1,2^+)$-$2$-chords that contradict Lemma~\ref{lem:f-chords}. If $w$ is adjacent to $x$ and $z^-$, then $W' = p_1xwz^-$ is a feasible chord with $|P_{W'}| = 4$. Thus, $w$ is adjacent to $x$ and $z$, and we consider the chord $W^{(3)} = p_1xwz$ with $P_{W^{(3)}} = p_1xwzz^{-}$. This chord is feasible as $z^-$ is not adjacent to $p_1$ implying the existence of a bad vertex $v$ in $G_{W^{(3)},2}$ with $|L(v)| = 3$ and $|N_{G}(v) \cap  P_{W^{(3)}}| \geq 2$. By the same logic applied to the vertex $w$, the vertex $v$ cannot lie on the boundary. Moreover, by the distance condition on $X_{G,L}$, $v$ cannot be an interior vertex that is adjacent to $w$. Thus, by a similar argument, $v$ must be an interior vertex adjacent to $x$ and $z$. However, this creates a separating $4$-cycle $xvzy$ containing the vertex $w$.
    
    Assume now that $w$ is adjacent to the vertex $y$. Note that $w$ cannot be adjacent to both $y$ and $p_1$ as $p_1xyw$ would be a separating $4$-cycle containing a neighbor of $x$. Thus, $w$ is adjacent to $y$ and $z^-$, and we consider the chord $W^{(3)} = p_1xy\tilde{y}z^-$. To satisfy the degree conditions of $x$, this chord is feasible, and $G_{W^{(3)},2}$ contains a vertex $v$ satisfying the conditions specified in Lemma~\ref{lem:badvertex}. By the distance condition of $X_{G,L}$, $v$ must lie on the boundary. The vertex $v$ cannot be adjacent to $x$ nor to the vertex $w$ as the chords $p_1xv$ and $z^-wv$ respectively would contradict Lemma~\ref{lem:f-chords}. The vertex $v$ is also not adjacent to $y$ as either $yvz^{-}z$ is a bad cycle or $zyv$ is a feasible $(3,3)$-$2$-chord with $|L(y)| = 3$ contradicting Lemma~\ref{lem:332chord}. Thus, $v$ is adjacent to $p_1$ and $z^-$ with $p_1vz^-$ a subpath of the boundary. In order to meet the degree requirements of $v$, $P_{W^{(3)}} \cup \{v\}$ is a separating $6$-cycle; however, this contradicts the distance condition of $X_{G,L}$.

    \textbf{Case 2:} Assume that $W = p_2xp_4$ and $y$ is an interior vertex. By symmetry, we assume that $y$ is adjacent to $x$ and $p_1$, and we consider the chord $W^{(1)} = p_1yxp_4$ with $P_{W^{(1)}} = p_1yxp_4p_5$. Since $p_1$ is not adjacent to $p_5$, $W^{(1)}$ is a feasible chord. Thus, there exists a vertex $z$ such that $|L(z)| = 3$ and $|N_{G}(z) \cap  P_{W^{(1)}}| \geq 2$. Additionally, $|P_{W^{(1)}} \cap P| = 3$ implies $z$ is adjacent to either $y$ or $x$.

    Assume that $z$ is an interior vertex.  If $z$ is adjacent to $y$, then it must be adjacent to $p_4$ or $p_5$. However, this creates either a separating $4$- or $5$-cycle that contains a neighbor of $x$. If $z$ is adjacent to $x$, then it cannot be adjacent to $p_1$ as $xzp_1p_2$ would be a separating $4$-cycle containing the vertex $y$. Thus, $z$ is adjacent to $x$ and $p_5$, and we consider the chord $W^{(2)} = p_1yxzp_5$. As $p_1$ is not adjacent to $p_5$, $W^{(2)}$ is feasible, and there exists a vertex $w$ in $G_{W^{(2)},2}$ such that $|L(w)| = 3$ and $|N_{G}(w) \cap  P_{W^{(2)}}| \geq 2$. If $w$ is an interior vertex, then it is not adjacent to $y$ or $z$ by the distance condition on $X_{G,L}$. If $w$ is an interior vertex adjacent to $x$, then it is adjacent to $p_1$ or $p_5$ which in either case creates a separating $4$-cycle. Thus, $w$ is a boundary vertex. By the same argument $w$ cannot be adjacent to $x$. Furthermore, $w$ is not adjacent to $y$ as $p_1yw$ is a $(1,3)$-$2$-chord that meets the conditions of Lemma~\ref{lem:f-chords}. Similarly, $w$ cannot be adjacent to $z$.  Since $w$ is not a bad vertex of $G$, it is not adjacent to both $p_1$ and $p_5$. Therefore, $z$ is a boundary vertex.
    
    If $z$ is a boundary vertex, then Lemma~\ref{lem:f-chords} implies that $z$ is not adjacent to $y$ and must therefore be adjacent to $x$. To avoid creating a separating $4$-cycle, $z$ is not adjacent to $p_1$ and must be adjacent to $p_5$ with the edge $p_5z$ lying on the boundary. We now consider the chord $W^{(2)} = p_1yxz$. The chord $W^{(2)}$ is feasible as $z$ is not adjacent to $p_1$ implying the existence of a boundary vertex $z^{-}$ such that $|L(z^{-})|=2$ and $p_5zz^{-}$ forms an induced path in $B(G)$. Like before, there exists a vertex $w$ such that $|L(w)| = 3$ and $|N_{G}(w) \cap  P_{W^{(2)}}| \geq 2$. 
    
    Assume that $w$ is an interior vertex. If $w$ is adjacent to $y$ and $z^-$, then the chord $W' = p_1ywz^-$ is feasible with $|P_{W'}| = 4$, a contradiction. If $w$ is adjacent to $y$ and $z$, then the chord $W' = p_1ywz$ is feasible $(1,3)$-$3$-chord that contradicts Lemma~\ref{lem:233chord}. To avoid creating a chord ruled out in Lemma~\ref{lem:233chord}, $w$ is not adjacent to $p_1$ and $z$ nor to $p_1$ and $z^-$. Thus, $w$ is adjacent to $x$ and must be adjacent to $z^-$ to avoid creating a separating $4$-cycle. Consider the chord $W^{(3)} = p_1yxwz^-$ which is feasible as $z^-$ is not adjacent to $p_1$ on the boundary. Thus, there exists a bad vertex $v$ meeting the requirements in Lemma~\ref{lem:badvertex}. Assume $v$ is an interior vertex. Observe that $v$ is not adjacent to $y$ or $w$ by the definition of $X_{G,L}$. If $v$ is adjacent to $x$, then it is adjacent to either $p_1$ or $z^-$ creating a separating $4$-cycle. If $v$ is adjacent to $p_1$ and $z^-$, then $p_1vz^-$ is a $(1,2)$-$2$-chord that contradicts Lemma~\ref{lem:f-chords}. Thus, $v$ is a boundary vertex. By the same argument as before, $v$ cannot be adjacent to $x$. Moreover, by Lemma~\ref{lem:f-chords}, $v$ is not adjacent to $y$ nor to $w$. Therefore, $v$ is adjacent to $p_1$ and $z^-$ with $p_1vz^-$ being a subpath of the boundary. However, this creates a separating $6$-cycle to meet the degree requirements of $v$ which contradicts the definition of $X_{G,L}$.
    
    Assume now that $w$ is a boundary vertex which, for a similar reason to $z$, must be adjacent to $x$ and $z^{-}$ with $z^{-}w$ an edge in $B(G)$. This creates the feasible chord $W^{(3)} = p_1yxw$ due to $G_{W^{(3)}}$ containing a neighbor of $y$. Therefore, there exists a bad vertex $v$ in $G_{W^{(3)}}$ with $|L(v)| = 3$ and $|N_{G}(v) \cap  P_{W^{(3)}}| \geq 2$. Using the same argument applied to the vertex $w$, the vertex $v$ is adjacent to $x$ and $w^-$ where $w^- \not = z^-$ is $w$'s other consecutive neighbor on the boundary with $|L(w^-)| = 2$. Note that the chord $p_1yxv$ is feasible once again as it contains a neighbor of $y$. As this process is infinite while $V(G)$ is finite, we arrive at a contradiction.

    \textbf{Case 3:} Assume that $W = p_2xp_4$ and $y$ is a boundary vertex. Since $y$ is not a bad vertex of $G$, $y$ must be adjacent to $x$ and exactly one of $p_1$ or $p_5$. By symmetry, we assume $y$ is adjacent to $x$ and $p_1$; moreover, the edge $yp_1$ lies on the boundary by Lemma~\ref{lem:f-chords}. Let $p_1y_0y_0^{+}y_1y_1^+\dots y_k$ be the maximal subpath of $B(G)$ such that $|L(y_i)| = 3$ for all $0 \leq i \leq k$, $|L(y_i^{+})| = 2$ for all $0 \leq i \leq k-1$, and $x$ is adjacent to all $y_i$ for all $0 \leq i \leq k$ where $y_0 = y$. 

    If $y_k$ is adjacent to $p_5$, then $y_kp_5$ is an edge on the boundary of $G$, and $G$ has no other vertices, see Figure~\ref{fig:112chord}. To show the bipartite graph $G-E(P)$ is $L$-colorable, it suffices by Alon--Tarsi to find an orientation on $G-E(P)$ such that the outdegree of every vertex is strictly less than its list size. Such an orientation is achieved by making each precolored vertex a sink, making $y_0y_0^{+}y_1y_1^+\dots y_k$ a directed path, orienting the edge $xy_0$ in the direction of $y_0$, and orienting all other edges $xy_i$ for $1 \leq i \leq k$ in the direction of $x$, see Figure~\ref{fig:112chord}.
    
    Therefore, $y_k$ is not adjacent to $p_5$, and the chord $W^{(1)} = p_4xy_k$ is feasible. By Lemma~\ref{lem:feasible}, there exists a vertex $y_{k}^+ \not= y_{k-1}^+$ adjacent to $y_k$ on $B(G)$ such that $|L(y_{k}^+)| = 2$ and $P_{W^{(1)}} = p_5p_4xy_{k}y_{k}^+$. Moreover, there exists a vertex $z$ such that $|L(z)| = 3$ and $|N_{G}(z) \cap P_{W^{(1)}}| \geq 2$. 
    
    Assume that $z$ is a boundary vertex. It is not adjacent to $p_4$ or $y_k$ by Lemma~\ref{lem:f-chords}. If $z$ is not adjacent to $x$, then $p_5zy_k^+$ must be a subpath of $B(G)$. To meet the degree conditions of $z$, $p_4xy_ky_k^+zp_5$ is a separating $6$-cycle. To avoid creating a bad cycle while still satisfying Lemma~\ref{lem:sep6}, there must exist interior vertices $w$ and $\tilde{w}$ with list size $3$ such that $w$ is adjacent to $y_k^+$ and $x$, $\tilde{w}$ is adjacent to $p_4$ and $z$, and $w\tilde{w} \in X_{G,L}$, see Figure~\ref{fig:112chord}. We claim that $G-E(P)$ is $L$-colorable. Since $G-E(P)$ is bipartite, it suffices, by the Alon--Tarsi Theorem, to find an orientation on $G-E(P)$ such that the outdegree of every vertex is strictly less than its list size. Such an orientation is achieved by making each precolored vertex a sink, making $xw\tilde{w}zy_k^+y_k\dots y_0$ a directed path, orienting the edge $wy_k^+$ in the direction of $y_k^+$, and orienting all edges $xy_i$ for $0 \leq i \leq k$ in the direction of $x$, see Figure~\ref{fig:112chord}.

    Hence, if $z$ is a boundary vertex, it must be adjacent to $x$. Furthermore, by the definition of $y_k$, $z$ is not adjacent to $y_k^+$ and is therefore adjacent to $p_5$ with the edge $zp_5$ lying on the boundary. Let $p_5z_0z_0^{-}z_1z_1^-\dots z_{\ell}$ be the maximal subpath of $B(G)$ such that $|L(z_i)| = 3$ for all $0 \leq i \leq \ell$, $|L(z_i^{-})| = 2$ for all $0 \leq i \leq \ell-1$, and $z_i$ is adjacent to $x$ for all $0 \leq i \leq \ell$ where $z_0 = z$. We consider the chord $W^{(2)} = z_{\ell}xy_{k}$ which is feasible by the maximality of $k$. Thus, there exists a boundary vertex $z_{k}^{-}\not=z_{k-1}^{-}$ adjacent to $z_k$ that satisfies $|L(z_{k}^{-})| = 2$, and $P_{W^{(2)}} = z_k^-z_kxy_ky_k^+$. Additionally, $(G_{W^{(2)},2},P_{W^{(2)}},L'')$ has a bad vertex $w$ such that $|L(w)| = 3$ and $|N_{G}(w) \cap P_{W^{(2)}}| \geq 2$. 
    
    By the choice of $k$ and $\ell$ and the absence of $(3,3)$-$1$-chords, if $w$ is a boundary vertex, then $y_k^+wz_{\ell}^-$ is a subpath of the boundary and $w$ is not adjacent to $x$. However, this implies $P_{W^{(2)}} \cup \{w\}$ is a separating $6$-cycle, which creates a bad cycle. Therefore, $w$ is an interior vertex. Note that $w$ is not adjacent to $y_k$ as either $y_kwz_{\ell}^-$ is a $(2,3)$-$2$-chord or $y_kwz_{\ell}$ is a feasible $(3,3)$-$2$-chord with $|L(w)| = 3$ contradicting Lemmas~\ref{lem:f-chords} and~\ref{lem:332chord}, respectively. By a symmetric argument, $w$ cannot be adjacent to $z_{\ell}$. If $w$ is adjacent to both $y_k^+$ and $z_{\ell}^-$, then $y_k^+wz_{\ell}^-$ is a $(2,2)$-$2$-chord. Therefore, by symmetry, we may assume that $w$ is adjacent to $x$ and $y_{k}^+$, and we consider the chord $W^{(3)} = z_{\ell}xwy_{k}^+$ with $P_{W^{(3)}} = z_{\ell}^-z_{\ell}xwy_{k}^+$. Since $W^{(3)}$ is feasible, $G_{W^{(3)},2}$ has a bad vertex $v$ that is adjacent to at least two vertices in $P_{W^{(3)}}$ and has list size $3$. 
    
    Assume that $v$ is a boundary vertex. Since $G_{W^{(2)},2}$ has no bad vertex on the boundary, $v$ must be adjacent to $w$. However, this creates a $(2,3)$-$2$-chord $y_k^+wv$, which contradicts Lemma~\ref{lem:f-chords}. Thus, $v$ is an interior vertex. Observe that $v$ is not adjacent to $w$ as either $W' = y_k^+wvz_{\ell}^-$ is a feasible chord with $|P_{W'}| = 4$ or $y_k^+wvz_{\ell}$ is a feasible $(2,3)$-$3$-chord that contradicts Lemma~\ref{lem:233chord}. To avoid creating a separating $4$-cycle or a chord in Lemma~\ref{lem:f-chords}, $v$ must be adjacent to $x$ and $z_{\ell}^-$, and we consider the feasible chord $W^{(4)} = z_{\ell}^-vxwy_{k}^+$. There must exist a bad vertex $u$ with list size $3$ adjacent to at least two vertices in $W^{(4)}$. Using an argument similar to the one applied to the vertex $v$ implies that $u$ is an interior vertex. By the definition of $X_{G,L}$, $u$ is not adjacent to $w$ nor to $v$. Additionally, $u$ is not adjacent to both $y_k^+$ and $z_{\ell}^-$ as $y_k^+uz_{\ell}^-$ would be a $(2,2)$-$2$-chord. However, this implies that $u$ is adjacent to $x$ creating a separating $4$-cycle that contains either $v$ or $w$. 
    
     Thus, $z$ is an interior vertex, and we may assume that $G_{W^{(1)},2}$ has no bad vertex on the boundary. Note that $z$ is not adjacent to $y_k$, otherwise either $p_4zy_k$ or $p_5zy_k$ forms a feasible $(1,3)$-$2$-chord satisfying the conditions of Lemma~\ref{lem:132chord}. Moreover, $z$ is not adjacent to $p_4$ as $p_4zy_k^+$ would then form a $(1,2)$-$2$-chord contradicting Corollary~\ref{cor:122chord}. Therefore, by Lemma~\ref{lem:f-chords}, $z$ is adjacent to $x$ and exactly one of $p_5$ and $y_k^+$. We first assume that $z$ is adjacent to $x$ and $y_k^+$ and consider the feasible chord $W^{(2)} = p_4xzy_k^+$ with  $P_{W^{(2)}} = p_5p_4xzy_k^+$. The target $(G_{W^{(2)},2},P_{W^{(2)}},L'')$ must have a bad vertex $w$ with $|L(w)| = 3$ and $|N_{G}(w) \cap P_{W^{(2)}}| \geq 2$. If $w$ is on the boundary, then it is adjacent to $z$ and exactly one vertex in $\{p_5, p_4, x, y_k^+\}$ as $G_{W^{(1)}}$ does not have a bad vertex on the boundary. However, this creates the $(2,3)$-$2$-chord $y_k^+zw$, a contradiction. Hence $w$ is an interior vertex. 
     
     To avoid creating a separating $4$-cycle or a $(1,2)$-$2$-chord, $w$ cannot be adjacent to $y_{k}^+$. If $w$ is adjacent to $z$, then it must be adjacent to $p_4$; otherwise, $W' = p_5wzy_k^+$ is a feasible chord with $|P_{W'}| = 4$. In this case, consider the feasible chord $W^{(3)} = p_4wzy_{k}^+$ with $P_{W^{(3)}} = p_5p_4wzy_{k}^+$. There then exists a bad vertex $v$ of $G_{W^{(3)},2}$ with list size $3$ that is adjacent to at least two vertices in $P_{W^{(3)}}$. Note that by the distance condition on $X_{G,L}$, the vertex $v$ must lie on the boundary. Moreover, as $v$ is not a bad vertex of $G_{W^{(1)},2}$, it is adjacent to exactly one vertex in $\{w,z\}$ and exactly one vertex in $\{p_4, p_5, y_k^{+}\}$. Note that $v$ is not adjacent to $w$ as either $y_{k}^+zwv$ is a feasible $(2,3)$-$3$-chord that contradicts Lemma~\ref{lem:233chord} or $p_4wv$ is a feasible $(1,3)$-$2$-chord that contradicts Lemma~\ref{lem:132chord}. As $v$ is also not adjacent to $z$ due to the absence of $(2,3)$-$2$-chords, we have that $w$ cannot be adjacent to both $z$ and $p_4$. Assume now that $w$ is adjacent to $x$ and $p_5$ and consider the feasible chord $W^{(3)} = p_5wxzy_{k}^+$. As before, there exists a bad vertex $v$ of $G_{W^{(3)},2}$ with list size $3$ that is adjacent to at least two vertices in $P_{W^{(3)}} = W^{(3)}$. Assume that $v$ is a boundary vertex, it implies that $v$ is adjacent to exactly one vertex in $\{w,z\}$ and exactly one vertex in $\{p_5, x, y_k^+\}$. However, if $v$ is adjacent to either $w$ or $z$, then it creates a chord that contradicts Lemma~\ref{lem:f-chords}. Thus, $v$ is an interior vertex. If $v$ is adjacent to either $w$ or $z$, this would contradict the distance condition on $X_{G,L}$. If $v$ is adjacent to $x$, then either $p_4xvp_5$ or $y_kxvy_k^+$ is a separating $4$-cycle that contains the vertex $w$ or $z$ respectively. As $G$ has no $(1,2)$-$2$-chords, $v$ is not adjacent to both $p_5$ and $y_k^{+}$. Therefore, $z$ cannot be adjacent to both $x$ and $y_k^+$.

     We may therefore assume $z$ is adjacent to $x$ and $p_5$, and $G_{W^{(1),2}}$ has no other bad vertex. Consider the feasible chord $W^{(2)} = p_5zxy_k$ with  $P_{W^{(2)}} = p_5zxy_ky_{k}^+$. There must exist a bad vertex $w$ in $G_{W^{(2),2}}$ with list size $3$ adjacent to $z$ and exactly one vertex in $\{p_5,x,y_k,y_k^+\}$. If $w$ is a boundary vertex, then $p_5zw$ is a $(1,3)$-$2$-chord that contradicts Lemma~\ref{lem:f-chords}. Thus, $w$ is an interior vertex. If $w$ is adjacent to $z$ and $y_k$, then $p_5zwy_k$ is a feasible $(1,3)$-$3$-chord that contradicts Lemma~\ref{lem:233chord}. If $w$ is adjacent to $z$ and $y_k^+$, then $W' = p_5zwy_k^+$ is a feasible chord with $|P_{W'}| = 4$. Therefore, $z$ is not adjacent to both $p_5$ and $x$ completing the proof.
    \end{proof}

\begin{figure}[!htbp]
\centering
\begin{subfigure}[b]{.45\linewidth}
\centering
\begin{tikzpicture}[scale = .95, decoration={markings, mark=at position 0.55 with {\arrow{>}}}]

\draw (45:2) arc (45:360:2);
\draw[postaction={decorate}, line width=.9pt] (45:2) arc (45:67.5:2);
\draw[postaction={decorate}, line width=.9pt] (-45:2) arc (-45:-67.5:2);
\draw[postaction={decorate}, line width=.9pt] (-45:2) arc (-45:-22.5:2);
\draw[postaction={decorate}, line width=.9pt] (-22.5:2) arc (-22.5:0:2);
\draw[postaction={decorate, line width = 1pt}, line width=1.5pt, line cap=round, dash pattern=on 0pt off 4.1\pgflinewidth] (10:2) arc (10:35:2);

\node[bdot, label={below:$p_1$}] at (-67.5:2) (p1) {};
\node[bdot, label={below:$p_2$}] at (270:2) (p2) {};
\node[bdot, label={left:$p_3$}] at (180:2) (p3) {};
\node[bdot, label={above:$p_4$}] at (90:2) (p4) {};
\node[bdot, label={above:$p_5$}] at (67.5:2) (p5) {};
\node[triangle, label={right:$y_0$}] at (-45:2) (y0) {};
\node[wdot, label={right:$y_0^{+}$}] at (-22.5:2) (y0+) {};
\node[triangle, label={right:$y_1$}] at (0:2) (y1) {};
\node[triangle, label={right: $y_{k}$}] at (45:2) (yk) {};
\node[square, label={left: $x$}] at (0:0) (x) {};

\draw[-, postaction={decorate}, line width=.9pt] (x)--(y0);
\draw[-, postaction={decorate}, line width=.9pt] (x)--(p2);
\draw[-, postaction={decorate}, line width=.9pt] (x)--(p4);
\draw[-, postaction={decorate}, line width=.9pt] (y1)--(x);
\draw[-, postaction={decorate}, line width=.9pt] (yk)--(x);

\end{tikzpicture}
\end{subfigure}
\begin{subfigure}[b]{.45\linewidth}
\centering
\begin{tikzpicture}[scale = .95, decoration={markings, mark=at position 0.6 with {\arrow{>}}}]

\draw (22.5:2) arc (22.5:337.5:2);
\draw[postaction={decorate, line width = 1pt}, line width=1.5pt, line cap=round, dash pattern=on 0pt off 4.2\pgflinewidth] (14:2) arc (14:-14:2);
\draw[postaction={decorate}, line width=.9pt] (-67.5:2) arc (-67.5:-90:2);
\draw[postaction={decorate}, line width=.9pt] (-45:2) arc (-45:-67.5:2);
\draw[postaction={decorate}, line width=.9pt] (-22.5:2) arc (-22.5:-45:2);
\draw[postaction={decorate}, line width=.9pt] (45:2) arc (45:22.5:2);
\draw[postaction={decorate}, line width=.9pt] (67.5:2) arc (67.5:45:2);
\draw[postaction={decorate}, line width=.9pt] (67.5:2) arc (67.5:90:2);

\node[bdot, label={below:$p_1$}] at (270:2) (p1) {};
\node[bdot, label={below:$p_2$}] at (247.5:2) (p2) {};
\node[bdot, label={left:$p_3$}] at (180:2) (p3) {};
\node[bdot, label={above:$p_4$}] at (112.5:2) (p4) {};
\node[bdot, label={above:$p_5$}] at (90:2) (p5) {};
\node[triangle, label={[label distance = -.1cm]67.5:$z$}] at (67.5:2) (z) {};
\node[wdot, label={[label distance = -.1cm]45:$y_{k}^{+}$}] at (45:2) (yk+) {};
\node[triangle, label={[label distance = -.1cm]22.5:$y_{k}$}] at (22.5:2) (yk) {};
\node[triangle, label={[label distance = -.1cm]-22.5:$y_1$}] at (-22.5:2) (y1) {};
\node[wdot, label={[label distance = -.1cm]-45:$y_0^+$}] at (-45:2) (y0+) {};
\node[triangle, label={[label distance = -.1cm]-67.5:$y_0$}] at (-67.5:2) (y0) {};
\node[square, label={left: $x$}] at (0:0) (x) {};
\node[triangle, label={left:$\tilde{w}$}] at (75:1.25) (w') {};
\node[triangle, label={right:$w$}] at (45:1.25) (w) {};

\draw[-, postaction={decorate}, line width=.9pt] (w')--(z);
\draw[-, postaction={decorate}, line width=.9pt] (w)--(yk+);

\draw[-, postaction={decorate}, line width=.9pt] (w)--(w');
\draw[-, postaction={decorate}, line width=.9pt] (w')--(p4);

\draw[-, postaction={decorate}, line width=.9pt] (x)--(w);
\draw[-, postaction={decorate}, line width=.9pt] (x)--(p2);
\draw[-, postaction={decorate}, line width=.9pt] (x)--(p4);
\draw[-, postaction={decorate}, line width=.9pt] (y0)--(x);
\draw[-, postaction={decorate}, line width=.9pt] (y1)--(x);
\draw[-, postaction={decorate}, line width=.9pt] (yk)--(x);

\end{tikzpicture}
\end{subfigure}
\caption{Graphs $G$ whose subgraphs $G-E(P)$ are manually proven to be list-colorable in Lemma~\ref{lem:112chord} by directing the edges in $G-E(P)$ as illustrated and applying the Alon--Tarsi Theorem.}
\label{fig:112chord}
\end{figure}

\begin{lem}\label{lem:chords}
    $G$ has no chords.
\end{lem}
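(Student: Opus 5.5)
A chord here means a $1$-chord $W=uv$, that is, an edge between two boundary vertices that is not an edge of $B$. The plan is to classify chords by the list sizes $(|L(u)|,|L(v)|)$ with $|L(u)|\le |L(v)|$. Every boundary vertex has list size in $\{1,2,3,4\}$, so it suffices to rule out each pair using Lemma~\ref{lem:f-chords}, Lemma~\ref{lem:feasible} and Lemma~\ref{lem:badvertex}.

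\textbf{Pairs excluded directly.} By Lemma~\ref{lem:f-chords} there is no $(2,1^+)$-$1$-chord and no $(3,3^+)$-$1$-chord. This handles every pair in which one end has list size $2$, including $(1,2)$. It also handles $(3,3)$, $(3,4)$ and $(4,4)$. Only the pairs $(1,1)$, $(1,3)$ and $(1,4)$ remain.

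\textbf{The $(1,1)$ case.} Let $u=p_i$ and $v=p_j$ with $i<j$ and $uv\notin E(P)$. Since $G$ is $C_3$-free, $j-i\ge 2$. The validity conditions forbid a list-size-$2$ vertex consecutive to an end of $P$ when $|P|\ge 3$, so, as in the proof of Lemma~\ref{lem:112chord}, we get $P_W=p_1\cdots p_ip_j\cdots p_k$ and $|P_W|=k-(j-i)+2\le k-0\le 5$; more precisely $|P_W|\le 4$. The chord is feasible because $G$ is $2$-connected and has no separating short cycles, so $G'_{W,1}$ is proper. This contradicts Lemma~\ref{lem:feasible}. If $|P|\le 2$, the chord is an edge of $P$ or creates a triangle, which is impossible.

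\textbf{The $(1,3)$ and $(1,4)$ cases.} Let $u\in P$ and $v\notin P$. Lemma~\ref{lem:f-chords} already excludes the case $|P\cap B(G_{W,2})|\le 2$, so we may assume both sides contain at least three vertices of $P$. With $|P|\le 5$ and $u$ shared, this forces $|P|=5$, $u=p_3$, and each side containing exactly three vertices of $P$.

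\begin{itemize}
\item If $|L(v)|=4$: choose the side $G_{W,2}$ containing $p_1,p_2$. Then $P_W=p_1p_2p_3v$, possibly together with a list-size-$2$ neighbor $v'$ of $v$, so $|P_W|\le 5$. By Lemma~\ref{lem:badvertex}, if $|P_W|=5$ there is a vertex $x$ with $|L(x)|=3$ having exactly one neighbor in $\{p_1,p_2,p_3\}$ and one neighbor in $\{v,v'\}$.
 \begin{itemize}
 \item $xv'$ is excluded when $x$ is on the boundary, since it would be a $(2,3)$-$1$-chord.
 \item An interior path $p_ixv'$ would be a $(1,2)$-$2$-chord, excluded by Corollary~\ref{cor:122chord}.
 \item A boundary $x$ with $p_ix$ must be consecutive with $p_1$ (by Lemma~\ref{lem:f-chords}), and then $xv$ is a $(3,4)$-$1$-chord unless it is a boundary edge. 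In that case $p_1xv p_3p_2$ closes a separating $5$-cycle, contradicting Lemma~\ref{lem:sep45}.
 \item An interior $x$ adjacent to $v$ and to $p_1$ or $p_2$ gives a $4$- or $5$-cycle through $p_3$, which must be a face. It also gives $(1,4)$-$2$-chords, which are handled by repeating the argument on the smaller side.
 \end{itemize}
 Otherwise $|P_W|\le 4$, contradicting Lemma~\ref{lem:feasible}.
\item If $|L(v)|=3$: run the same argument. Here $x$ cannot be adjacent to $v$ if $x$ is interior, by sparsity when $v$ lies in $X$-like configurations, or by Lemma~\ref{lem:132chord} and Lemma~\ref{lem:332chord}. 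The remaining configurations lead to separating $6$-cycles, which are excluded by Lemma~\ref{lem:sep6} and the $15$-sparsity condition.
\end{itemize}

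\textbf{Main obstacle.} I expect the $(1,3)$ and $(1,4)$ cases to be the hard part, specifically the subcase where the bad vertex $x$ is interior and forms a path $p_ixv$ with $i\in\{1,2\}$. There I would take $W$ minimizing $|V(G_{W,2})|$ and argue by iterating: the new chord $p_ixv$ is again feasible, and Lemma~\ref{lem:badvertex} produces a new bad vertex strictly inside. Each step either contradicts minimality or creates a separating $4$- or $5$-cycle, mirroring the infinite-descent arguments in Lemma~\ref{lem:122chord1} and Lemma~\ref{lem:112chord}.
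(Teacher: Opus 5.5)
\textbf{There is a genuine gap: the $|L(v)|=4$ case contains a configuration that no structural lemma excludes, and your plan has no step that colors it.} Your reduction matches the paper: only a chord $W=p_3v$ with $|P|=5$ remains, you then split on $|L(v)|$, and Lemma~\ref{lem:badvertex} supplies a bad vertex $x$ adjacent to one of $p_1,p_2,p_3$ and one of $v,v'$. The case cannot be closed by contradiction alone.

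Your first sub-bullet is false. A boundary $x$ with $|L(x)|=3$ may be consecutive on $B(G)$ with $v'$; that is a boundary edge, not a chord. It may also be consecutive with $p_1$, a pairing your third sub-bullet (which treats only $p_1$ and $v$) misses. Since $d(x)\ge 3$, the cycle $C=p_1p_2p_3vv'x$ is then a separating $6$-cycle. Lemma~\ref{lem:sep6} allows its interior to be a single vertex $w\in X_{G,L}$ with $N(w)=\{p_2,v,x\}$, and nothing forbids this:
\begin{itemize}
\item $wvv'x$ is not a bad cycle, because $|L(v)|=4$.
\item The $2$-chords $p_2wx$ and $vwx$ are not feasible, since $P_W$ already covers their $4$-face side, so $G'_{W,1}=G$.
\item $p_2wv$ is a $(1,4)$-$2$-chord, which no lemma excludes.
\item Sparsity holds, because $w$ is the only vertex of $X_{G,L}$.
\end{itemize}
Two-vertex interiors of $C$ do produce $(1,1)$- or $(1,2)$-$2$-chords. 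When $|L(v)|=3$ the one-vertex interior does produce a bad cycle. So your conclusion in the $|L(v)|=3$ case is right, but for those reasons, not ``Lemma~\ref{lem:sep6} and sparsity.''

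Consequently, for $|L(v)|=4$ the graph $G$ can literally be this small graph. Its other side $G_{W,1}$ is either the face $vp_3p_4p_5$ or the mirror image of the same structure. The paper finishes by showing directly that $G-E(P)$ is $L$-colorable with an Alon--Tarsi orientation (Figure~\ref{fig:chords}). In the first case this is simply the observation that, after deleting the colors of precolored neighbors, the uncolored vertices form the even cycle $vv'xw$ with lists of size at least $2$.

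Smaller points:
\begin{itemize}
\item A $(4,4)$-chord is not a $(3,3^+)$-chord. It is excluded because both ends lie off $P$, so $|P_W|\le 4$ and Lemma~\ref{lem:feasible} applies.
\item In the $(1,1)$ case the count for a $1$-chord is $|P_W|=k-(j-i)+1$ with $j-i\ge 3$.
\item Feasibility of $p_3v$ requires taking $G_{W,2}$ on the side where $v$ is not consecutive with $p_1$. This is possible because $v$ is not a bad vertex; otherwise that side is a $4$-face and $G'_{W,1}=G$.
\item An interior $x$ adjacent to $p_1$ and $v$ is excluded by Lemma~\ref{lem:f-chords}, not by facial cycles through $p_3$.
\item The subcase of an interior $x$ adjacent to $p_2$ and $v$ is only gestured at. The new $(1,4)$-$2$-chord $p_2xv$ spawns bad vertices of several kinds (on the boundary next to $p_1$, or interior next to $v'$). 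The paper needs a multi-step case analysis there, not a plain repetition of the argument.
\end{itemize}
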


\begin{proof}
    By Lemmas~\ref{lem:feasible} and~\ref{lem:f-chords}, it remains to show that $G$ has no $(1,3^+)$-chords of the form $W = p_3u$ where $|P| = 5$. 
    
    First, assume that $|L(u)| = 3$ with $P$ relabeled such that $p_1$ is a vertex in $G_{W,2}$. Since $G$ has no bad vertex, $u$ is not adjacent to $p_1$ on the boundary and $G_{W,2}$ is feasible. Let $u^+$ be the boundary vertex in $G_{W,2}$ besides $p_3$ that is adjacent to $u$. Note that $|L(u^+)| = 2$ in order for $|P_{W}| = 5$ and $P_W = p_1p_2p_3uu^+$. By Lemma~\ref{lem:badvertex}, there exists a bad vertex $v$ of $G_{W,2}$ with list size $3$ adjacent to at least two vertices in $P_W$. As $v$ is not a bad vertex of $G$, $v$ must be adjacent to either $u$ or $u^+$. If $v$ is an interior vertex, then $v$ must be adjacent to $u$ and $p_2$ by Lemma~\ref{lem:f-chords}; however, this contradicts Lemma~\ref{lem:132chord}. Thus, $v$ must be boundary vertex. To avoid having a $(2,3)$ or $(3,3)$-chord, $u^+vp_5$ must form a subpath of the boundary of $G$ and $P_{W} \cup \{v\}$ forms a separating $6$-cycle. However, the only possible separating $6$-cycles would lead to a bad cycle or a $(1,2)$-$2$-chord in $G$, a contradiction.

    Assume that $|L(u)| = 4$. As $G$ has no bad vertex, $u$ is adjacent to at most one of $p_1$ and $p_5$ on the boundary of $G$. Relabel $P$ such that $u$ is not adjacent $p_1$ and define $G_{W,1}$, $G_{W,2}$ appropriately such that $G_{W,2}$ contains $p_1$. By construction, $G_{W,2}$ is feasible with $u^+$ defined as above and $P_W = p_1p_2p_3uu^+$. By Lemma~\ref{lem:badvertex} there exists a bad vertex $v$ of $G_{W,2}$ with list size $3$ adjacent to at least two vertices in $P_W$. 
    
    Assume $G_{W,2}$ has no bad vertex on its boundary. Thus, $v$ must be an interior vertex and adjacent to $p_2$ and $u$. We consider the $2$-chord $W' = p_2vu$ with $P_{W'} = p_5p_4vuu^+$. As $u^+$ is not adjacent to $p_1$, $G_{W',2}$ is feasible and has a bad vertex $w$ with list size $3$. Assume that $w$ is a boundary vertex. As $G_{W,2}$ has no bad vertex on its boundary by assumption, $w$ must be adjacent to $v$ and exactly one of $p_1$ and $u^+$. Moreover, Lemma~\ref{lem:132chord} implies that $w$ is adjacent to $v$ and $p_1$ with the edge $p_1w$ lying on the boundary. Consider the chord $W'' = uvw$ which is feasible as $w$ is not a bad vertex of $G_{W,2}$. This implies that there exists a boundary vertex $w^- \not = p_1$ that is consecutive to $w$ with list size $2$ and $P_{W''} = u^+uvww^-$. Moreover, $G_{W'',2}$ has a bad vertex $x$ with list size $3$.
    Note that $x$ cannot be an interior vertex by Lemmas~\ref{lem:f-chords},~\ref{lem:233chord}, and~\ref{lem:332chord}. If $x$ is a boundary vertex, then it cannot be adjacent to $v$ as either $xvww^-$ is a bad cycle of $G$ or $p_2vx$ is a feasible $(1,3)$-$2$-chord that contradicts Lemma~\ref{lem:132chord}. This would then imply $P_{W''} \cup \{x\}$ is a separating $6$-cycle contradicting the absence of bad cycles in $G$ or the definition of $X_{G,L}$.
    
    Assume that $w$ is an interior vertex. If $w$ is adjacent to $u$, then $w$ must be adjacent to $p_2$ or $p_1$ creating a separating $4$- or $5$-cycle containing the vertex $v$ in its interior. As $G$ has no $(1,2)$-$2$-chords by Corollary~\ref{cor:122chord}, $w$ must be adjacent to $v$ and exactly one of $u^+$ or $p_1$. Consider the case in which $w$ is adjacent to $v$ and $u^+$ and let $W'' = p_2vwu^+$. Once again, $W''$ is feasible and must therefore contain a bad vertex $x$ with list size $3$. The definition of $X_{G,L}$ implies $x$ is a boundary vertex. By Lemma~\ref{lem:f-chords}, $x$ is not adjacent to $p_2$ nor to $w$. If $x$ is adjacent to $v$, then either $uu^+xv$ is a separating $4$-cycle with $w$ in its interior or $u^+wvx$ is a feasible $(2,3)$-$3$-chord that contradicts Lemma~\ref{lem:233chord}. Thus, $u^+xp_1$ forms a subpath of the boundary contradicting the assumption that $G_{W,2}$ has no bad vertex on its boundary. By a similar argument, $w$ cannot be adjacent to $v$ and $p_1$.

    Therefore, $G_{W,2}$ has a bad vertex $v$ on its boundary. As $G$ has no $(3,1^+)$-chord, $P_{W} \cup \{v\}$ forms a separating $6$-cycle and $G_{W,2}$ is one of two possible subgraphs specified in Lemma~\ref{lem:sep6}. Note that there are only the two possibilities shown in Figure~\ref{fig:chords} for $G_{W,2}$ as $G$ has no $(1,2)$-$2$-chords. Furthermore, $G_{W,1}$ is either the four-cycle $up_3p_4p_5$ or by symmetry also one of the two possible subgraphs specified in Lemma~\ref{lem:sep6}. To complete the proof, it suffices to show that $G-E(P)$ is list-colorable in each of these cases. As $G$ is bipartite in each of these cases, it suffices by the Alon--Tarsi Theorem to find an orientation of the edges of $G-E(P)$ such that the outdegree of each vertex is strictly less than its list size. Such an orientation is given in Figure~\ref{fig:chords} when $G_{W,1}$ is a $4$-cycle. The other cases can be proved similarly.
\end{proof}

\begin{figure}[!htbp]
\centering
\begin{subfigure}[b]{.45\linewidth}
\centering
\begin{tikzpicture}[scale = .9, decoration={markings, mark=at position 0.55 with {\arrow{>}}}]

\draw (18:2) arc (18:210:2);
\draw[postaction={decorate}, line width=.9pt] (270:2) arc (270:210:2);
\draw[postaction={decorate}, line width=.9pt] (-90:2) arc (-90:-54:2);
\draw[postaction={decorate}, line width=.9pt] (-54:2) arc (-54:-18:2);
\draw[postaction={decorate}, line width=.9pt] (-18:2) arc (-18:18:2);

\node[bdot, label={right:$p_1$}] at (18:2) (p1) {};
\node[bdot, label={above:$p_2$}] at (54:2) (p2) {};
\node[bdot, label={above:$p_3$}] at (90:2) (p3) {};
\node[bdot, label={left:$p_4$}] at (150:2) (p4) {};
\node[bdot, label={below:$p_5$}] at (210:2) (p5) {};
\node[triangle, label={right:$v$}] at (-18:2) (v) {};
\node[wdot, label={right:$u^{+}$}] at (-54:2) (u+) {};
\node[square, label={below: $u$}] at (270:2) (u) {};
\node[triangle] at (0:1.18) (w) {};

\draw[-, postaction={decorate}, line width=.9pt] (u)--(p3);
\draw[-, postaction={decorate}, line width=.9pt] (w)--(u);
\draw[-, postaction={decorate}, line width=.9pt] (w)--(p2);
\draw[-, postaction={decorate}, line width=.9pt] (v)--(w);

\end{tikzpicture}
\end{subfigure}
\begin{subfigure}[b]{.45\linewidth}
\centering
\begin{tikzpicture}[scale = .9, decoration={markings, mark=at position 0.55 with {\arrow{>}}}]

\draw (18:2) arc (18:210:2);
\draw[postaction={decorate}, line width=.9pt] (270:2) arc (270:210:2);
\draw[postaction={decorate}, line width=.9pt] (-90:2) arc (-90:-54:2);
\draw[postaction={decorate}, line width=.9pt] (-54:2) arc (-54:-18:2);
\draw[postaction={decorate}, line width=.9pt] (-18:2) arc (-18:18:2);

\node[bdot, label={right:$p_1$}] at (18:2) (p1) {};
\node[bdot, label={above:$p_2$}] at (54:2) (p2) {};
\node[bdot, label={above:$p_3$}] at (90:2) (p3) {};
\node[bdot, label={left:$p_4$}] at (150:2) (p4) {};
\node[bdot, label={below:$p_5$}] at (210:2) (p5) {};
\node[triangle, label={right:$v$}] at (-18:2) (v) {};
\node[wdot, label={right:$u^{+}$}] at (-54:2) (u+) {};
\node[square, label={below: $u$}] at (270:2) (u) {};
\node[triangle] at (-38.17:1) (w) {};
\node[triangle] at (38:1) (x) {};

\draw[-, postaction={decorate}, line width=.9pt] (u)--(p3);
\draw[-, postaction={decorate}, line width=.9pt] (w)--(u);
\draw[-, postaction={decorate}, line width=.9pt] (v)--(w);
\draw[-, postaction={decorate}, line width=.9pt] (x)--(p3);
\draw[-, postaction={decorate}, line width=.9pt] (x)--(p1);
\draw[-, postaction={decorate}, line width=.9pt] (w)--(x);

\end{tikzpicture}
\end{subfigure}
\caption{Graphs $G$ whose subgraphs $G-E(P)$ are manually proven to be list-colorable in Lemma~\ref{lem:chords} by directing the edges in $G-E(P)$ as illustrated and applying Alon--Tarsi.}
\label{fig:chords}
\end{figure}

\section{Peeling off vertices}\label{sec3}
Denote $B(G)=p_k\dots p_1 v_1\dots v_s$.
\begin{claim} \label{cla:v,p}
The vertex $v_1$ has no common inner neighbor with $p_i$ for $i \geq 3$. The vertex $v_s$ has no common inner neighbor with $p_i$ for $i \leq k-2$.    
\end{claim}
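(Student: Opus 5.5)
By the symmetry $p_j\leftrightarrow p_{k+1-j}$, $v_1\leftrightarrow v_s$ of $B(G)=p_k\dots p_1v_1\dots v_s$, it suffices to prove the statement for $v_1$. Suppose, for a contradiction, that some interior vertex $x$ is adjacent to both $v_1$ and $p_i$ with $i\ge 3$. Then $W=v_1xp_i$ is a $2$-chord. One side of $W$ is bounded by the cycle $C=xp_ip_{i-1}\cdots p_1v_1$, which has length $i+2\in\{5,6,7\}$. The other side contains $p_{i+1},\dots,p_k$ and $v_2,\dots,v_s$. The plan is to combine the chord lemmas of Section~\ref{sec2} with Lemmas~\ref{lem:sep45} and~\ref{lem:sep6} applied to $C$, splitting into cases on $|L(v_1)|$.

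\emph{Case $|L(v_1)|=2$.} Then $W$ is a $(1,2)$-$2$-chord, which is impossible by Corollary~\ref{cor:122chord}. Note also that since $G$ has no bad vertex, $v_1$ is not consecutive to an end of $P$ with list size $2$ when $|P|\ge 3$. So we may assume $|L(v_1)|\ge 3$; this is consistent, as $v_1$ is a boundary vertex with $|L(v_1)|\in\{3,4\}$.

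\emph{Case $|L(v_1)|\in\{3,4\}$.} I will take the side containing $v_2,\dots,v_s$ to be $G_{W,2}$. Its intersection with $P$ is $p_i\cdots p_k$, which has at most $k-2\le 3$ vertices. The complementary side (the inside of $C$ together with $C$) is a proper subgraph, so $W$ is feasible. Hence $P_W=p_k\cdots p_ixv_1$, possibly extended by $v_2$ if $|L(v_2)|=2$, and this has at most $(k-i+1)+3$ vertices. Lemma~\ref{lem:feasible} forces $|P_W|=5$. This pins down $k=5$, $i=3$, and $|L(v_2)|=2$ (otherwise $|P_W|\le 4$ already gives a contradiction). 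In this remaining configuration $C=xp_3p_2p_1v_1$ is a $5$-cycle. By Lemma~\ref{lem:sep45} it bounds a face, so $G_{W,1}=C$.

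\emph{The residual configuration.} Lemma~\ref{lem:badvertex} supplies a vertex $y\in G_{W,2}$ with $|L(y)|=3$, $|N(y)\cap P_W|\ge2$, and exactly one neighbour in $P_W\cap P=\{p_3,p_4,p_5\}$ and one in $\{x,v_1,v_2\}$. I will rule out each placement of $y$ as follows.
\begin{itemize}
\item $y$ adjacent to $v_1$ or $v_2$ as a chord: excluded by Lemma~\ref{lem:chords}.
\item $y$ adjacent to $x$ with $y$ on the boundary: excluded by Lemma~\ref{lem:132chord} or Lemma~\ref{lem:f-chords}.
\item $y$ interior and adjacent to $x$ and to $p_4$ or $p_5$: this creates a separating $4$- or $5$-cycle, contradicting Lemma~\ref{lem:sep45}, or a $(1,1)$-$2$-chord through $y$, contradicting Lemma~\ref{lem:112chord}.
\item $y$ adjacent to $v_2$ and some $p_j$: this is a $(1,2)$-$2$-chord, contradicting Corollary~\ref{cor:122chord}.
\end{itemize}
If $|L(x)|=3$, the distance condition on $X_{G,L}$ additionally limits which interior vertices near $x$ can have list size $3$.

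I expect the main obstacle to be the subcase where $y$ is interior and adjacent to $x$. There one has to iterate: build a new feasible $3$- or $4$-chord through $y$ and extract another bad vertex, exactly as in the proofs of Lemmas~\ref{lem:112chord} and~\ref{lem:chords}. The iteration is closed off either by finiteness, by a separating $6$-cycle contradicting Lemma~\ref{lem:sep6} together with the sparsity of $X_{G,L}$, or, in a terminal small configuration, by an explicit Alon--Tarsi orientation. The statement for $v_s$ then follows by the symmetric argument.
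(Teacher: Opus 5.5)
\textbf{There is a genuine gap, and it begins at the counting step that is supposed to reduce everything to a single residual configuration.}

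Put $\epsilon=1$ if $|L(v_2)|=2$ and $\epsilon=0$ otherwise. Then $|P_W|=(k-i+1)+2+\epsilon$, so $|P_W|=5$ forces $k-i=2-\epsilon$. This does \emph{not} single out $k=5$, $i=3$, $|L(v_2)|=2$: that choice gives $|P_W|=6$. A six-vertex $P_W$ is not a legal precolored path of a target, so Lemma~\ref{lem:badvertex} cannot be invoked there. Yet your residual analysis does exactly that, with $P_W-P=\{x,v_1,v_2\}$.

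The cases Lemma~\ref{lem:feasible} actually leaves open are:
\begin{itemize}
\item $k=5$, $i=3$, with $|P_W|=5$ or $6$ according to $|L(v_2)|$;
\item $i=k-1$ with $|L(v_2)|=2$.
\end{itemize}
In particular you silently discard the case where your $x$ is adjacent to $v_1$ and $p_4$ with $|L(v_2)|=2$. This is where the substance of the paper's proof lies. If the $6$-cycle $v_1p_1p_2p_3p_4x$ has nonempty interior, Lemma~\ref{lem:sep6} and Lemma~\ref{lem:112chord} force the interior to be an edge $ab$ of $X_{G,L}$ with $N(a)=\{v_1,p_2,b\}$ and $N(b)=\{a,p_3,x\}$. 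Sparsity then gives $|L(x)|=4$. The feasible chord $v_1xp_4$, with $P_W=p_5p_4xv_1v_2$, still has to be analysed. Its bad vertex is a boundary vertex adjacent to $x$, which produces a maximal run of list-size-$3$ boundary vertices adjacent to $x$. That run is closed off either by the explicit Alon--Tarsi orientation of Figure~\ref{fig:claim} or by a separating $6$-cycle contradicting sparsity. Nothing in your proposal addresses this.

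For the $p_3$ case, and for the $p_4$ case when the $6$-cycle has empty interior, the idea you are missing is a short reduction rather than a bad-vertex chase. By Lemma~\ref{lem:sep45} the $5$-cycle $xp_3p_2p_1v_1$ is facial, so $p_2$ has degree $2$. Delete $p_2$ and add the edge $p_1p_3$. No triangle arises, since a common neighbour of $p_1$ and $p_3$ would give a chord or a $(1,1)$-$2$-chord. The result is a smaller valid target with the four-vertex precolored path $p_1p_3p_4p_5$, and its coloring extends to $G$. This works regardless of $|L(v_2)|$, so it also covers the six-vertex $P_W$ situation that your framework cannot handle. The analogous move for the empty $6$-cycle is to delete $p_3$ and add $p_2p_4$.

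By contrast, your residual analysis is only a sketch that ends with an expected iteration, and several of its steps are unjustified:
\begin{itemize}
\item An interior $y$ adjacent to $x$ and $p_4$ merely closes the $4$-cycle $xp_3p_4y$, which need not be separating, and no $(1,1)$-$2$-chord through $y$ arises.
\item The lemmas you cite for a boundary $y$ adjacent to $x$ need $|L(x)|=3$, and Lemma~\ref{lem:132chord} also needs $|P\cap B(G_{W,2})|\le 2$. Here $|L(x)|=4$ is possible, and $p_3xy$ has three vertices of $P$ on each side.
\end{itemize}
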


\begin{proof} 
If $u$ is adjacent to $v_1, p_5$, then $W=v_1up_5$ is a $(1,1^+)$-$2$-chord with $|P \cap B(G_{W,2})|=1$, contradicting Lemma~\ref{lem:f-chords}.

If $u$ is adjacent to $v_1,p_3$, then we have a $5$-cycle $C=v_1p_1p_2p_3u$. Since there is no separating $5$-cycle, $\int(C)$ is empty. We consider the graph $G'$ obtained by removing the vertex $p_2$ from $G$ and adding the edge $e=p_1p_3$. Let $L'(v)=L(v)$ for $v\in G'$ and $P'=p_1p_3p_4p_5$. It is easy to see that $(G',P',L')$ is a valid target. By the minimality of $G$, $G'-E(P')$ is $L'$-colorable, and hence, $G-E(P)$ is $L$-colorable, a contradiction.

If $u$ is adjacent to $v_1,p_4$, then we have a $6$-cycle $C=v_1p_1p_2p_3p_4u$. If $\int(C)$ is empty, consider the graph $G'$ obtained by removing the vertex $p_3$ from $G$ and adding the edge $e=p_2p_4$. Let $L'(v)=L(v)$ for $v\in G'$ and $P'=p_1p_2p_4p_5$. Once again, it is easy to see that $(G',P',L')$ is a valid target. By the minimality of $G$, $G'-E(P')$ is $L'$-colorable, and hence, $G-E(P)$ is $L$-colorable, a contradiction.

If $\int(C)$ is nonempty, then by Lemma~\ref{lem:sep6}, $\int(C) = \{x\}$ or $\{x,y\}$ with $x,y \in X_{G,L}$. If $\int(C) =\{x\}$, then either $x$ is adjacent to $p_1,p_3$ or $x$ is adjacent to $p_2,p_4$. In either case, there is a $(1,1)$-$2$-chord, contradicting Lemma \ref{lem:112chord}. If $\int(C) =\{x,y\}$, since there is no $(1,1)$-$2$-chord, we must have $N(x)=\{v_1,p_2,y\}$ and $N(y)=\{x, p_3, u\}$. By the distance condition on $X_{G,L}$, we have $|L(u)|=4$ and the $2$-chord $W=v_1up_4$ is feasible as $4 = |L(u)|\leq d(u)$. By the minimality of $G$, $G'_{W,1}$ has an $L$-coloring say $\phi_1$. Let $L'$ be the restriction of $L$ to $G_{W,2}$ with $L'(v) = \{\phi_1(v)\}$ if $v \in P_W$ and $L'(v)=L(v)$ otherwise. As $W$ is feasible, we have $P_W = p_5p_4uv_1v_2$ with $|L(v_2)| = 2$ by Lemma~\ref{lem:feasible}. It is easy to see that $(G_{W,2},P_W,L')$ has no bad edge, no bad cycle, no worse cycle, and no worst cycle. By Lemma~\ref{lem:badvertex}, $(G_{W,2},P_W,L')$ has a bad vertex $w$ with $|L(w)| = 3$ adjacent to at least two vertices on $P_{W}$. By the distance condition of $X_{G,L}$, $w$ must lie on the boundary of $G$. To avoid creating a chord in Lemma~\ref{lem:f-chords}, $w$ is neither adjacent to $v_1$ nor to $p_4$. If $w$ is adjacent to $p_5$ and $v_2$ on the boundary without being adjacent to $u$, $P_{W} \cup \{w\}$ would form a separating $6$-cycle contradicting the distance condition of $X_{G,L}$. Thus, $w$ is adjacent to $u$ and is consecutive on the boundary to at least one of the vertices $v_2,p_5$. If $w$ is consecutive on the boundary to $v_2$, let $v_1v_2w_0w_1w_2\dots w_{2k}$ be the maximal subpath of $B(G)$ such that $|L(w_{2i})| = 3$ for all $0 \leq i \leq k$, $|L(w_{2i-1})| = 2$ for all $1 \leq i \leq k$, and $u$ is adjacent to all $w_{2i}$ for all $0\leq i \leq k$ where $w_0 = w$. If $w_{2k}$ is adjacent to $p_5$ then $w_{2k}p_5$ is an edge on the boundary, and $G$ has no other vertices, see Figure~\ref{fig:claim}. To show the bipartite graph $G-E(P)$ is $L$-colorable, it suffices by the Alon--Tarsi Theorem~\cite{Alon1992} to find an orientation on $G-E(P)$ such that the outdegree of every vertex is strictly less than its list size. Such an orientation is shown in Figure~\ref{fig:claim}.

Therefore, $w_{2k}$ is not adjacent to $p_5$ and the chord $W' = p_4uw_{2k}$ is feasible. By Lemma~\ref{lem:feasible}, $P_{W'} = p_5p_4uw_{2k}w_{2k}^+$ where $w_{2k}^+ \not = w_{2k-1}$ is a boundary vertex consecutive to $w_{2k}$ and $|L(w_{2k}^+)| = 2$. Moreover, there must exist a bad vertex $z$ such that $|L(z)| = 3$ and $|N_{G}(z) \cap P_{W'}| \geq 2$. By the same logic applied to the vertex $w$, the vertex $z$ must be a boundary vertex adjacent to $u$. Furthermore, by the maximality of $k$, $z$ must be consecutive to $p_5$ on the boundary. Let $p_5z_0z_1\dots z_{2\ell}$ be the maximal subpath of $B(G)$ such that $|L(z_{2i})| = 3$ for all $0 \leq i \leq \ell$, $|L(z_{2i-1})| = 2$ for all $1 \leq i \leq \ell$, and $u$ is adjacent to all $z_{2i}$ for all $0\leq i \leq \ell$ where $z_0 = z$. As $z_{2\ell}$ is not adjacent to $w_{2k}^+$, the chord $W'' = z_{2\ell} u w_{2k}$ is feasible. By Lemma~\ref{lem:feasible}, $P_{W''} = z_{2\ell}^-z_{2\ell}uw_{2k}w_{2k}^+$ where $z_{2\ell}^- \not = z_{2\ell-1}$ is a boundary vertex consecutive to $z_{2\ell}$ and $|L(z_{2\ell}^-)| = 2$. There must then exist a bad vertex $\alpha$ with $|L(\alpha)| = 3$ adjacent to at least two vertices on $P_{W''}$. By the distance condition on $X_{G,L}$, $\alpha$ must lie on the boundary. As there are no $(3,3)$-$1$-chords, $\alpha$ is not adjacent to $w_{2k}$ nor to $z_{2\ell}$. Moreover, by the maximality of $k$ and $\ell$, $\alpha$ is not adjacent to $u$. Therefore, $z_{2\ell}^-uw_{2k}^+$ forms a subpath of the boundary, and $P_{W''} \cup \{\alpha\}$ forms a separating $6$-cycle. However, this creates an interior vertex with list size $3$ contradicting the distance condition on $X_{G,L}$. Similarly, $w$ cannot be adjacent to $u$ while being consecutive on the boundary to $p_5$. Therefore, $(G_{W,2},P_W,L')$ is a valid target and has an $L'$-coloring $\phi_2$. The union of $\phi_1$ and $\phi_2$ gives an $L$-coloring of $G-E(P)$, a contradiction. 
\end{proof}

\begin{figure}[!htbp]
\centering
\begin{tikzpicture}[scale = .9, decoration={markings, mark=at position 0.6 with {\arrow{>}}}]

\draw (90:2) arc (90:270:2);
\draw[postaction={decorate}, line width=.9pt] (-67.5:2) arc (-67.5:-90:2);
\draw[postaction={decorate}, line width=.9pt] (-45:2) arc (-45:-67.5:2);
\draw[postaction={decorate}, line width=.9pt] (-22.5:2) arc (-22.5:-45:2);
\draw[postaction={decorate}, line width=.9pt] (0:2) arc (0:-22.5:2);
\draw[postaction={decorate}, line width=.9pt] (22.5:2) arc (22.5:0:2);
\draw[postaction={decorate}, line width=.9pt] (67.5:2) arc (67.5:90:2);
\draw[ postaction={decorate, line width = 1pt}, line width=1.5pt, line cap=round, dash pattern=on 0pt off 4.2\pgflinewidth] (57:2) arc (57:29:2);

\node[bdot, label={below:$p_1$}] at (270:2) (p1) {};
\node[bdot, label={below:$p_2$}] at (225:2) (p2) {};
\node[bdot, label={left:$p_3$}] at (180:2) (p3) {};
\node[bdot, label={above:$p_4$}] at (135:2) (p4) {};
\node[bdot, label={above:$p_5$}] at (90:2) (p5) {};
\node[triangle, label={[label distance = -.1cm]-20:$v_1$}] at (-67.5:2) (v1) {};
\node[wdot, label={[label distance = -.1cm]-10:$v_2$}] at (-45:2) (v2) {};
\node[triangle, label={right:$w_0$}] at (-22.5:2) (w0) {};
\node[wdot, label={right:$w_1$}] at (0:2) (w1) {};
\node[triangle, label={right: $w_2$}] at (22.5:2) (w2) {};
\node[triangle, label={right: $w_{2k}$}] at (67.5:2) (w2k) {};
\node[square, label={[label distance = -.17cm]230:$u$}] at (0:0) (u) {};
\node[triangle, label={below: $x$}] at (225:1.27) (x) {};
\node[triangle, label={[label distance = -.1cm]90:$y$}] at (180:.9) (y) {};

\draw[-, postaction={decorate}, line width=.9pt] (u)--(y);
\draw[-, postaction={decorate}, line width=.9pt] (u)--(w2k);
\draw[-, postaction={decorate}, line width=.9pt] (u)--(p4);
\draw[-, postaction={decorate}, line width=.9pt] (w2)--(u);
\draw[-, postaction={decorate}, line width=.9pt] (w0)--(u);
\draw[-, postaction={decorate}, line width=.9pt] (y)--(x);
\draw[-, postaction={decorate}, line width=.9pt] (y)--(p3);
\draw[-, postaction={decorate}, line width=.9pt] (x)--(p2);
\draw[-, postaction={decorate}, line width=.9pt] (x)--(v1);
\draw[-, postaction={decorate}, line width=.9pt] (v1)--(u);

\end{tikzpicture}
\caption{Graph $G$ whose subgraph $G-E(P)$ is proven to be list-colorable in Claim~\ref{cla:v,p} by orientating the edges of $G-E(P)$ as illustrated and applying the Alon--Tarsi Theorem.}
\label{fig:claim}
\end{figure}

For each following case, we color vertices close to the vertex $p_1$ and describe the remaining uncolored subgraph.
At the end of each case, we stop at a configuration that leads to some bad subgraph due to the fact that some edge $xy\in X_{G,L}$ is close to the boundary.
We will give an upper bound on the distance from that edge to the precolored path $P$.
After finding a configuration for each case, we will argue that any combination of two configurations cannot occur in the next section.

\textbf{Case 1:} $s=0$ or $s\ge 2$ and $|L(v_2)|\ge 3$.
We color $p_1$ by the only color $\alpha$ in its list $L(p_1)$ and denote $G'=G-p_1$, $P'=P-p_1$, and $L'$ the list assignment on $G'$ with $L'(v)=L(v)-\alpha$ if $v\in N(p_1)-P$ and $L'(v)=L(v)$ otherwise. By Lemma~\ref{lem:f-chords} and~\ref{lem:112chord}, $(G',P',L')$ has no bad vertex. Since $(G,P,L)$ has no bad vertex, $|L'(v_1)|\ge |L(v_1)|-1\ge 2$. As $|L'(v_2)|=|L(v_2)|\ge 3$ and $G$ has no $(2,3^+)$-$1$-chords, $v_1$ is not part of a bad edge in $(G',P',L')$. Furthermore, $G$ having no $(1,2)$-$2$-chords by Corollary~\ref{cor:122chord} implies that $(G',P',L')$ has no bad edge. Since $(G,P,L)$ has no worse cycle and does not have any $(1,2^+)$-$2$-chords $W$ with $|P\cap B(G_{W,2})| = 1$, $(G',P',L')$ has no bad cycle. Similarly, $(G',P',L')$ has no worst cycle. If $(G',P',L')$ has a worse cycle $C=xyzw$ where $z,w$ are interior vertices and $x,y$ are boundary vertices of $G'$, then in the original graph $G$, $z$ and $w$ must be interior vertices, $v_1 = x$, and $v_2 = y$.
Thus, we have the configuration as in Figure~\ref{fig:case1}.
In this configuration, the distance between $wz$ and $P$ is at most $2$.

\begin{figure}[!htbp]
\centering
\begin{tikzpicture}
\node[invisnode] at (-.5,0) (a) {};
\node[bdot, label={below:$p_1$}] at (0,0) (p1) {};
\node[square,label={below:$v_1$}] at (1,0) (v1) {};
\node[triangle,label={below:$v_2$}] at (2,0) (v2) {};
\node[invisnode] at (2.5,0) (b) {};
\node[triangle,label={above:$w$}] at (1,1) (w) {};
\node[triangle,label={above:$z$}] at (2,1) (z) {};
\draw[black] (a) --(p1);
\draw[black] (p1)--(v1);
\draw[black] (v1)--(v2);
\draw[black] (v2)--(b);
\draw[black] (v1)--(w);
\draw[black] (v2) --(z);
\draw[red]   (w) --(z);
\end{tikzpicture}
\caption{The end configuration in Case 1 that produces a worse cycle $v_1v_2zw$.}
\label{fig:case1}
\end{figure}

\textbf{Case 2:} $s=1$ or ($s\ge 2$ and (($|L(v_2)|=2$ and $L(v_1)-(L(v_2)\cup L(p_1))\neq\emptyset$).

We color $v_1$ by a color $\alpha\in L(v_1)\setminus (L(p_1)\cup L(p_k))$ if $s=1$ or $\alpha\in L(v_1)\setminus (L(v_2)\cup L(p_1))$ if $s\ge 2$.
Denote $G'=G-v_1$ and $L'$ the list assignment on $G'$ with $L'(v)=L(v)-\alpha$ if $v\in N(v_1)$ and $L'(v)=L(v)$ otherwise.

As $G$ has no $(2,3^+)$-$1/2$-chords by Lemma~\ref{lem:f-chords}, $(G',P,L')$ has no bad edge and no worst cycle. Suppose that $(G',P,L')$ has a bad cycle $C=xyzw$ where $w$ is the interior vertex and $|L'(y)|=2$. If $y$ is an interior vertex of $G$, then $y$ is adjacent to $v_1$ with $|L(y)| = 3$ and $x$ is a boundary vertex. However, as $G$ has no bad cycle, the chord $v_1yx$ is a feasible $(3,3^+)$-$2$-chord with $|L(y)|= 3$ contradicting Lemma~\ref{lem:332chord}. Then it must be the case that $y$ is a boundary vertex of $G$. If $y = v_2$, then $xyzw$ forms a worst cycle in $G$, and if $y \not = v_2$, then there exists a $(2,3^+)$-$2$-chord or a worst cycle in $G$. As both of these are impossible, $(G',P,L')$ has no bad cycle.

Suppose that $(G',P,L')$ has a worse cycle $C=xyzw$ where $z,w$ are interior vertices of $G'$ and $x,y$ are boundary vertices. Then, without loss of generality, we may assume that between $x,y$ only $x$ is an interior vertex of $G$ adjacent to $v_1$ with $|L(x)|=4$ and $|L(y)|=3$. If all of $y,z,w$ are also interior vertices of $G$, then $y,z,w \in X_{G,L}$, which is impossible. If $y$ is on the boundary, this produces the configuration seen in Figure~\ref{fig:case2}. In this configuration, the distance between $xw$ and $P$ is at most $3$.

\begin{figure}[!htbp]
\centering
\begin{tikzpicture}
\node[bdot, label={below:$p_1$}] at (0,0) (p1) {};
\node[triangle,label={below:$v_1$}] at (1.25,0) (v1) {};
\node[wdot,label={below:$v_2$}] at (2.5,0) (v2) {};
\node[triangle,label={below:$y$}] at (3.75,0) (y) {};
\node[square, label={above:$x$}] at (1.25,1) (x) {};
\node[triangle,label={above:$w$}] at (2.5,1) (w) {};
\node[triangle,label={above:$z$}] at (3.75,1) (z) {};
\node[invisnode] at (-.5,0) (a) {};
\node[invisnode] at (2.9,0) (b) {};
\node[invisnode] at (3.35,0) (c) {};
\node[invisnode] at (4.25,0) (d) {};

\draw[black] (a) --(p1)--(v1)--(v2)--(b);
\draw[black] (c)--(y)--(z);
\draw[red] (w)--(z);
\draw[black] (w)--(x)--(v1);
\draw[black] (d)--(y)--(x);
\draw[black] (p1)--(.375,.3);
\draw[black] (x)--(.875,.7);
\draw[black, line width=1.2pt, line cap=round, dash pattern=on 0pt off 4\pgflinewidth] (2.95,0)--(3.3,0);
\draw[black, line width=1.2pt, line cap=round, dash pattern=on 0pt off 4\pgflinewidth] (.4875,.39)--(.8125,.65);
\end{tikzpicture}
\caption{The end configuration of Case 2 that produces a worse cycle $xyzw$. Note that $|L(v_1)| = 3$ or $4$. }
\label{fig:case2}
\end{figure}

Suppose that $(G',P,L')$ has a bad vertex $x$. By Claim~\ref{cla:v,p}, $x$ must be adjacent to $v_1$ and $p_2$ in $G$ with $|L(x)| = 3$; moreover, $x$ is an interior vertex of $G$ as $G$ has no chords. Since $G$ has no separating $4$-cycles, the interior of the cycle $p_1v_1xp_2$ is empty. Thus, we may consider the subgraph $G'' = G'-\{p_1\}$ with list $L''$ such that $x$ is precolored by a color $\beta$ in $L'(x)-L'(p_2)$ and $L''(v) = L'(v)$ otherwise and $P''=P\cup\{x\}-p_1$. 
 Clearly $(G'',P'',L'')$ has no bad edges and no bad or worst cycle. Moreover, if $(G',P,L')$ has no worse cycle, then $(G'',P'',L'')$ also has no worse cycle. Thus, we may assume $G''$ has a bad vertex $y$. If $y$ is a boundary vertex of $G$, then $yxv_1$ is either a $(2,3^+)$-$2$-chord or a feasible $(3,3^+)$-$2$-chord with $|L(x)| = 3$, a contradiction. Hence, $y$ is an interior vertex of $G$. Note that $y$ cannot be adjacent to $v_1$; otherwise, $y$ must also be adjacent to $p_2$ creating the separating $4$-cycle $yp_2p_1v_1$ containing the vertex $x$. Furthermore, as $G$ has no $(1,1)$-$2$-chord, $|L''(y)| = |L(y)| = 3$ implying $y$ is adjacent to $x$ and exactly one of $p_3$, $p_4$, or $p_5$. If $y$ is adjacent to $x$ and $p_5$, then $p_5yxv_1$ is a feasible $(1,3^+)$-$3$-chord meeting the conditions of Lemma~\ref{lem:233chord} (while in general $G$ could have a feasible $(1,4)$-$3$-chord meeting the conditions of Lemma~\ref{lem:233chord}, the proof of the lemma holds for this particular chord). If $y$ is adjacent to $x$ and $p_4$, then $p_4p_3p_2xy$ is a $5$-cycle. Consider the graph $\tilde{G}$ obtained 
 from $G$ by deleting $p_3$ and adding in the edge $p_2 p_4$ with $\tilde{P} = P-\{p_3\}$.
  Then $(\tilde{G}, \tilde{P}, \tilde{L})$ is a valid target where $\tilde{L}$ is the resulting list assignment of $\tilde{G}$ implying there exists a coloring of $\tilde{G}-E(\tilde{P})$ which extends to a coloring of $G-E(P)$. Therefore, $y$ must be adjacent to $x$ and $p_3$. In this case, consider the graph $G'''$ obtained from $G''$ by deleting $p_2$ and precoloring $y$ with a color in $L''(y)-\{\beta, L(p_3)\}$.

As before $G'''$ has no bad edges and no bad or worst cycle, and only has a worse cycle if $(G',P,L')$ has one. Then it must be the case that $(G''',P''',L''')$ has a bad vertex $z$. By the definition of $X_{G,L}$ and the absence of $(1,1)$-$2$-chords in $G$, $z$ cannot be an interior vertex of $G$. If $z$ is a boundary vertex, then it must be adjacent to $y$ as $G''$ does not have a bad vertex on the boundary. By Lemmas~\ref{lem:233chord} and~\ref{lem:chords}, $|L(z)| = 3$ and $z$ is adjacent to $y$ and $p_5$ with the edge $zp_5$ lying on the boundary of $G$. However, this implies $v_s = z$ contradicting Claim~\ref{cla:v,p} that $v_s$ shares no common interior neighbor with $p_3$ if $|P| = 5$. Therefore, $(G',P,L')$ has no bad vertex if it has no worse cycle.

\textbf{Case 3:} $s\ge 2$ and ($|L(v_2)|=2$ and $L(v_1)-(L(v_2)\cup L(p_1))=\emptyset$) and ($L(v_2) \not\subseteq L(v_3)$ or $|L(v_4)| \geq 3$)

If $L(v_2)\not\subseteq L(v_3)$, then we color $v_2$ by $\alpha_2\in L(v_2) - L(v_3)$.
If $|L(v_4)|\ge 3$, then we color $v_2$ by some $\alpha_2\in L(v_2)$.
In either case, we color $v_1$ by some $\alpha_1\in L(v_1)-(L(p_1)\cup\{\alpha_2\})$, and for $i=1,2$, delete the color $\alpha_i$ from the lists of the neighbors of $v_i$'s. Let $G'=G-\{v_1,v_2\}$ and $L'$ be the resulting list.

Assume that $(G',P,L')$ has a bad edge $xy$ with $|L'(x)| = |L'(y)| = 2$. If $x$ is adjacent to $v_1$ and $y$ is adjacent to $v_2$, then $xyv_2v_1$ is either a bad cycle if $v_3 = y$ or a worst cycle if $x$ and $y$ are interior vertices since $G$ has no chords. Thus, without loss of generality $y$ is not adjacent to $v_1$ or $v_2$; however, this creates a $(2^+,3)$-$2$-chord contradicting Lemma~\ref{lem:f-chords}. 

Assume that $(G',P,L')$ has a bad vertex $x$. As $G$ has no chords and no $(1,1)$-$2$-chords, $x$ must be an interior vertex of $G$ adjacent to exactly one of $v_1$ or $v_2$ and exactly one precolored vertex. By Corollary~\ref{cor:122chord} and Claim~\ref{cla:v,p}, $x$ must be adjacent to $v_1$ and $p_2$. However, by a similar argument to Case 2, if $(G',P,L')$ has no bad, worse, or worst cycle, then $(G',P,L')$ has no such bad vertex $x$.

Assume that $(G',P,L')$ has a bad cycle $xyzw$ such that $y$ is the boundary vertex with $|L'(y)|=2$ and $w$ is the interior vertex with $|L'(w)|=3$. If $y$ is an interior vertex of $G$, then it must be adjacent to exactly one of $v_1$ or $v_2$. In this case, $|L(y)| = |L(w)| = 3$ implying that one of $x$ or $w$ must lie on the boundary of $G$ by the definition of $X_{G,L}$. However, this would imply $G$ contains either a $(2,3)$-$2$-chord or a feasible $(3,3)$-$2$-chord contradicting Lemmas~\ref{lem:f-chords} and~\ref{lem:332chord} respectively. If $y$ is a boundary vertex that is not $v_3$, then $v_3 = x$ with $|L(x)| = 4$, $v_4 = y$, and $v_5 = z$ since $G$ has no worst cycle. However, as $|L(v_4)| = |L(y)| = 2$, we must have $L(v_2) \not\subseteq L(v_3) = L(x)$ and therefore $|L'(x)| = 4$. Thus, $y = v_3$, $z = v_4$, and $x$ is an interior vertex of $G$ with $|L(x)| = 4$ such that $x$ is adjacent to $v_1$ and $v_3$. Since $|L'(v_3)| \not = |L(v_3)|$, $L(v_2) \subseteq L(v_3)$ by construction. Note that as $G$ has no separating $4$-cycles, the only neighbors of $v_3 = y$ are $x$, $v_4 = w$, and $v_2$. Consider the graph $\tilde{G}$ obtained from $G$ by identifying the vertices $v_1$ and $v_3$ into a single vertex $v$. Let $\tilde{L}$ be the resulting list with $\tilde{L}(v) = L(v_1)$. Note that $\tilde{G}$ is still $C_3$-free and $(\tilde{G}, P, \tilde{L})$ is a valid target. Hence, by minimality of $G$, there exists a valid $\tilde{L}$-coloring $\varphi$ of $\tilde{G}-P$. Since $\varphi(v) \in L(v_1)-L(p_1) = L(v_2) \subseteq L(v_3)$, we may extend $\varphi$ to an $L$-coloring of $G-P$ by coloring both $v_1$ and $v_3$ the color $\varphi(v)$.

Assume that $(G',P,L')$ has a worst cycle $xyzw$ where $x,y$ are boundary vertices of $G'$ such that $|L'(x)| = 2$ and $|L'(y)| = 3$. By the definition of $X_{G,L}$, $x$ is a boundary vertex of $G$; moreover, by Lemma~\ref{lem:f-chords}, $|L(x)| = 3$. Thus, $v_3 = x$ and $y$ is an interior vertex of $G$ adjacent to $v_1$ and $v_3$ with $|L(y)| = 4$. Since $|L'(v_3)| \not = |L(v_3)|$, $L(v_2) \subseteq L(v_3)$ implying $|L(v_4)| \geq 3$. Once again we may consider the graph obtained from $G$ by identifying the vertices $v_1$ and $v_3$ whose coloring we may extend as in the discussion of bad cycles. 

Assume $(G',P,L')$ has a worse cycle $C=xyzw$ with $x,y$ boundary vertices of $G'$. At least one $x$ or $y$ must be adjacent to one of $v_1$ or $v_2$; moreover, if this vertex is an interior vertex of $G$, then it must have list size $4$. If $x$ is adjacent to $v_1$ and $y$ is an interior vertex of $G$, then $|L(x)|=|L(y)|=4$ and $y$ is adjacent to $v_2$. This produces one possible end configuration, as shown in Figure~\ref{fig:case31}. If $x$ is adjacent to $v_1$ and $y$ is a boundary vertex with $|L(y)|=4$, then $y = v_3$ and we may once again identify $v_1$ with $v_3$ and apply the same argument as when discussing bad cycles. If $x$ is adjacent to $v_1$ and $y$ is a boundary vertex with $|L(y)|=3$, this produces another possible end configuration, as shown in Figure~\ref{fig:case31}. Thus, we may now assume neither $x$ nor $y$ is adjacent to $v_1$. If $x$ is adjacent to $v_2$, then $v_3 = x$ and $v_4 = y$ with $|L(x)| = |L(y)|+1 = 4$ as $G$ has no $(2,3)$-$2$-chords. This creates another possible end configuration, as shown in Figure~\ref{fig:case31}. In all of these possible configurations, the distance from $wz$ to the precolored path $P$ is at most $4$.

\begin{figure}[!htbp]
\centering
\begin{tikzpicture}
\node[invisnode] at (-.5,0) (a) {};
\node[bdot, label={below:$p_1$}] at (0,0) (p1) {};
\node[triangle,label={below:$v_1$}] at (1,0) (v1) {};
\node[wdot,label={below:$v_2$}] at (2,0) (v2) {};
\node[triangle,label={below:$v_3$}] at (3,0) (v3) {};
\node[invisnode] at (3.5,0) (b) {};
\node[square,label={left:$x$}] at (1,1) (x) {};
\node[square,label={right:$y$}] at (2,1) (y) {};
\node[triangle,label={left:$w$}] at (1,2) (w) {};
\node[triangle,label={right:$z$}] at (2,2) (z) {};

\draw[black] (a)--(p1)--(v1)--(v2)--(v3)--(b);
\draw[black] (v1)--(x)--(w);
\draw[black] (v2)--(y)--(z);
\draw[black] (x)--(y);
\draw[red] (w)--(z);
\draw[black] (p1)--(0.3,0.3);
\draw[black] (x)--(0.7,0.7);
\draw[black] (v3)--(2.7,0.3);
\draw[black] (y)--(2.3,0.7);
\draw[black, line width=1.2pt, line cap=round, dash pattern=on 0pt off 4\pgflinewidth] (.39,.39)--(.65,.65);
\draw[black, line width=1.2pt, line cap=round, dash pattern=on 0pt off 4\pgflinewidth] (2.61,.39)--(2.35,0.65);
\end{tikzpicture}
\hspace{.5cm}
\begin{tikzpicture}
\node[invisnode] at (-.5,0) (a) {};
\node[bdot, label={below:$p_1$}] at (0,0) (p1) {};
\node[triangle,label={below:$v_1$}] at (1,0) (v1) {};
\node[wdot,label={below:$v_2$}] at (2,0) (v2) {};
\node[invisnode] at (2.4,0) (b) {};
\node[invisnode] at (2.85,0) (c) {};
\node[triangle,label={below:$y$}] at (3.25,0) (y) {};
\node[invisnode] at (3.75,0) (d) {};
\node[square,label={above:$x$}] at (1,1) (x) {};
\node[triangle,label={above:$w$}] at (2,1) (w) {};
\node[triangle,label={above:$z$}] at (3.25,1) (z) {};

\draw[black] (a)--(p1)--(v1)--(v2)--(b);
\draw[black] (c)--(y)--(d);
\draw[black] (v1)--(x)--(w);
\draw[red] (w)--(z);
\draw[black] (p1)--(0.3,0.3);
\draw[black] (x)--(0.7,0.7);
\draw[black] (z)--(y)--(x);
\draw[black, line width=1.2pt, line cap=round, dash pattern=on 0pt off 4\pgflinewidth] (2.45,0)--(2.85,0);
\draw[black, line width=1.2pt, line cap=round, dash pattern=on 0pt off 4\pgflinewidth] (.39,.39)--(.65,.65);
\end{tikzpicture}\hspace{.5cm}
\begin{tikzpicture}
\node[invisnode] at (-.5,0) (a) {};
\node[bdot, label={below:$p_1$}] at (0,0) (p1) {};
\node[triangle,label={below:$v_1$}] at (1,0) (v1) {};
\node[wdot,label={below:$v_2$}] at (2,0) (v2) {};
\node[square,label={below:$x$}] at (3,0) (x) {};
\node[triangle,label={below:$y$}] at (4,0) (y) {};
\node[invisnode] at (4.5,0) (b) {};
\node[triangle,label={above:$w$}] at (3,1) (w) {};
\node[triangle,label={above:$z$}] at (4,1) (z) {};

\draw[black] (a)--(p1)--(v1)--(v2)--(x)--(y)--(b);
\draw[black] (x)--(w);
\draw[black] (y)--(z);
\draw[red] (w)--(z);
\end{tikzpicture}
\caption{The end configurations in Case 3 that produce a worse cycle $xyzw$. }
\label{fig:case31}
\end{figure}

\textbf{Case 4:} ($s\ge 2$ and $|L(v_2)|=2$ and $L(v_1)-(L(v_2)\cup L(p_1))=\emptyset$) and ($L(v_2) \subseteq L(v_3)$ and $|L(v_4)| = 2$)

\begin{claim} \label{cla:v3,p}
In this case, $v_3$ and $p_i$ have no common neighbor $y$ with $|L(y)|=3$ for $i=1,2,4,5$,
\end{claim}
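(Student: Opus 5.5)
Suppose $y$ is a common neighbor of $v_3$ and $p_i$ with $|L(y)|=3$. Since $G$ has no chords (Lemma~\ref{lem:chords}), $y$ is an interior vertex, so $y\in X_{G,L}$. In Case~4 we have $|L(v_2)|=|L(v_4)|=2$. There is no $(2,2^+)$-$1$-chord, so $v_3$ is not a list-size-$2$ vertex adjacent to $v_2$, hence $|L(v_3)|\ge 3$. Also $L(v_2)\subseteq L(v_3)$. In each case the plan is to build a $2$-chord $W=p_iyv_3$ and rule it out with the chord lemmas. Where those lemmas do not apply directly, we will show $W$ is feasible and use Lemma~\ref{lem:feasible} and Lemma~\ref{lem:badvertex} to reach a contradiction.

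\textbf{The cases $i=1,2$.} For $i=1$, the path $p_1yv_3$ together with $p_1v_1v_2v_3$ gives a $5$-cycle. By Lemma~\ref{lem:sep45} it bounds an empty disk, so the side of $W$ away from most of $P$ has $P\cap B(G_{W,2})=\{p_1\}$. If $|L(v_3)|=3$, Lemma~\ref{lem:132chord} (or Lemma~\ref{lem:f-chords}, which forbids $(1,1^+)$-$2$-chords meeting $P$ in a single vertex on the small side) kills $W$ directly. If $|L(v_3)|=4$, the same item of Lemma~\ref{lem:f-chords} applies because it has no restriction on the list size of the far end. For $i=2$, the cycle $p_2p_1v_1v_2v_3y$ is a $6$-cycle. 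If it separates, Lemma~\ref{lem:sep6} puts a vertex of $X_{G,L}$ inside it adjacent to or near $y$, which contradicts $15$-sparsity together with $y\in X_{G,L}$. If it does not separate, $W=p_2yv_3$ has $|P\cap B(G_{W,2})|=2$. We then apply Lemma~\ref{lem:132chord} when $|L(v_3)|=3$. When $|L(v_3)|=4$, we argue as in the proof of Claim~\ref{cla:v,p}, checking that $y$ would force a bad vertex of $(G,P,L)$ or a $(1,2)$-$2$-chord, the latter excluded by Corollary~\ref{cor:122chord}.

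\textbf{The cases $i=4,5$.} Take $G_{W,2}$ to be the side containing $v_1,v_2$, so $P\cap B(G_{W,2})$ is $p_1,\dots,p_i$, with at least $4$ vertices. Since $|L(v_4)|=2$, the path $P_W$ contains $v_4$ on the other side. For $i=5$, the chord $p_5yv_3$ separates $v_s$ from $P$; mirroring the side labels yields a $(1,3^+)$-$2$-chord with one $P$-vertex on the small side, excluded by Lemma~\ref{lem:f-chords}. For $i=4$, $W=p_4yv_3$ is feasible, and on the side containing $p_5$ we have $P_W=p_5p_4yv_3v_4$. Lemma~\ref{lem:badvertex} then gives a bad vertex $x$ with $|L(x)|=3$, adjacent to exactly one of $p_4,p_5$ and to exactly one of $y,v_3,v_4$. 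We eliminate $x$ as follows:
\begin{itemize}
\item $x$ cannot be interior and adjacent to $y$, by sparsity;
\item $x$ cannot be adjacent to $v_3$, since that creates a separating $4$-cycle (Lemma~\ref{lem:sep45}) or a forbidden $(1,3)$-$2$-chord;
\item $x$ cannot be adjacent to $v_4$, since that creates a $(1,2)$-$2$-chord, excluded by Corollary~\ref{cor:122chord};
\item if $x$ is a boundary vertex, then $x=v_s$, contradicting Claim~\ref{cla:v,p}, or else $P_W\cup\{x\}$ is a separating $6$-cycle, excluded by Lemma~\ref{lem:sep6} and sparsity.
\end{itemize}

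The main obstacle I expect is the case $i=4$ with $|L(v_3)|=4$, where the bad-vertex chase may iterate as in Lemma~\ref{lem:112chord}. If it does, I will use the same finite-descent argument, taking the chord with $|V(G_{W,2})|$ minimal, to terminate it.
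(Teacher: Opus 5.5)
Your cases $i=1$ and $i=5$ are fine: both fall to the $(1,1^+)$-$2$-chord item of Lemma~\ref{lem:f-chords}, and for $i=1$ the empty $5$-cycle already forces $d(v_1)=2$. The reduction to $|L(v_3)|=4$ via Lemma~\ref{lem:132chord} is also fine.

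\textbf{Gap in the case $i=2$ with $|L(v_3)|=4$.} In Case~4, $|L(v_1)|=3$, since $L(v_1)\subseteq L(v_2)\cup L(p_1)$ and $v_1$ is not bad; hence $d(v_1)\ge 3$.
\begin{itemize}
\item If the hexagon $p_2p_1v_1v_2v_3y$ is not separating, the only edge it can contain is $v_1y$, so $y$ is adjacent to $p_2,v_1,v_3$.
\item If it is separating, Lemma~\ref{lem:sep6} gives an interior $x\in X_{G,L}$ adjacent to $p_2,v_1,v_3$. The other options give a $(1,2)$-$2$-chord or violate sparsity.
\end{itemize}
Neither outcome contradicts anything you cite:
\begin{itemize}
\item $15$-sparsity imposes no distance condition between two $K_1$-components of $G[X_{G,L}]$, so $x$ and $y$ may be isolated vertices at distance $2$.
\item A vertex adjacent to $p_2,v_1,v_3$ has one neighbor in $P$ and no neighbor with a $2$-list, so it is neither a bad vertex nor the middle of a $(1,2)$-$2$-chord.
\item For the feasible chord $p_2yv_3$ one gets $P_W=p_1p_2yv_3v_2$, and the bad vertex required by Lemma~\ref{lem:badvertex} is simply $v_1$.
\end{itemize}
So this configuration is consistent with every structural lemma, and no chord argument can exclude it. The missing idea is the paper's reduction: identify $v_1$ and $v_3$ into one vertex with list $L(v_1)$, color the smaller target by minimality, and lift. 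The merged color lies in $L(v_1)\setminus L(p_1)\subseteq L(v_2)\subseteq L(v_3)$, so it can be used on both $v_1$ and $v_3$, and $v_2$ takes its other color. This is exactly where the Case~4 list hypotheses enter. Your proposal never uses them, which is a warning sign for a claim stated only in Case~4.

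\textbf{The case $i=4$ does not close as written.}
\begin{itemize}
\item The ``exactly one neighbor in $P_W\cap P$'' clause of Lemma~\ref{lem:badvertex} needs $|P_W\cap P|=3$. Here $P_W\cap P=\{p_4,p_5\}$, so you must also handle $x$ adjacent only to $y$ and $v_4$.
\item An interior $x$ adjacent to $y$ is not excluded by sparsity, since $\{x,y\}$ may be a lone $K_2$-component. The paper needs a further chase through the chord $p_4yxv_4$ and a separating $6$-cycle.
\item Most seriously, your last bullet fails. Claim~\ref{cla:v,p} only forbids common inner neighbors of $v_s$ and $p_i$ for $i\le k-2=3$, whereas here $x=v_s$ shares $y$ with $p_4$.
\end{itemize}
That last configuration is exactly the terminal one the paper reaches: a boundary vertex adjacent to $y$, with $p_5zv_4$ on the boundary. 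The paper gets no contradiction there; it instead switches to peeling from the $p_k$ side. Your finite-descent remark does not address this terminal case.
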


\begin{proof}
By Lemma~\ref{lem:132chord}, we may assume that $|L(v_3)| = 4$. If $y$ is adjacent to $v_3$ and $p_1$, then $p_1v_1v_2v_3y$ is a separating $5$-cycle in order to meet the degree conditions of $v_1$. If $y$ is adjacent to $v_3$ and $p_5$, then $p_5yv_3$ is a feasible chord with $|P_{W}| = 4$ contradicting Lemma~\ref{lem:feasible}.

Assume that $y$ is adjacent to $v_3$ and $p_2$. As $d(v_1) \geq 3$, we may assume $y$ is adjacent to $v_1$; otherwise, $p_2p_1v_1v_2v_3y$ is a separating $6$-cycle containing an internal vertex with neighbors $p_2,p_1,v_3$ which we can take to be $y$ instead. We consider the graph $\tilde{G}$ obtained by identifying the vertices $v_1$ and $v_3$ together into a single vertex $v$ and setting $\tilde{L}$ to be the resulting list with $\tilde{L}(v) = L(v_1)$. Since $v_1$ and $v_3$ share a neighbor, $\tilde{G}$ is $C_3$-free. Moreover, as $(\tilde{G}, P, \tilde{L})$ is a valid target, there exists a valid $\tilde{L}$-coloring $\varphi$ of $\tilde{G}-P$ which we can extend to a valid $L$-coloring of $G-P$ by coloring $v_1$ and $v_3$ the color $\varphi(v)$.

Assume that $y$ is adjacent to $v_3$ and $p_4$ and consider the chord $W = p_4yv_3$. We take $y$ such that $|V(G_{W,2})|$ is minimal. By Lemma~\ref{lem:feasible}, $P_W = p_5p_4yv_3v_4$, and by Lemma~\ref{lem:badvertex}, $G_{W,2}$ has a bad vertex $z$ with list size $3$ adjacent to at least two vertices on $P_W$. First, assume that $z$ is an interior vertex. By the minimality of $y$ and Lemma~\ref{lem:f-chords}, $z$ is adjacent to $y$ and exactly one of $p_5$ and $v_4$. If $z$ is adjacent to $y$ and $v_4$, consider the feasible chord $W' = p_4yzv_4$. There must exist a bad vertex $z'$ of $G_{W',2}$ with list size $3$ adjacent to two vertices on $P_{W'}$. Observe that $z'$ lies on the boundary of $G$ by definition of $X_{G,L}$. By our chord lemmas, $p_5z'v_4$ forms a subpath of the boundary of $G$ and $p_5p_4yzv_4z'$ is a separating $6$-cycle contradicting the distance condition of $X_{G,L}$. By a similar argument, $z$ cannot be adjacent to $y$ and $p_5$. Thus, $z$ must be a boundary vertex, and we may assume $G_{W,2}$ has no bad vertex on its interior. By the chord lemmas, $z$ is adjacent to $y$ and $p_5zv_4$ forms a subpath of the boundary of $G$. In this case, we may peel from the other side of $P$ and apply a symmetrical argument. Note that peeling from the other side does not result in an end configuration.
\end{proof}

If $L(v_4)=L(v_2)$, then we color $v_3$ by some $c\in L(v_3)-L(v_2)$ and remove $c$ from the lists of its neighbors to obtain a subgraph $G'$ and a new list assignment $L'$.
By Lemma~\ref{lem:chords}, there is no $(2,3)$-2-chord so $(G',P,L')$ contains no bad edge.
By Lemma~\ref{lem:332chord} and the fact that $G$ contains no worst cycle, $(G',P,L')$ contains no bad or worst cycle.
Assume that $(G',P,L')$ has a worse cycle $xyzw$ with $z$ and $w$ interior vertices of $G'$. As $G$ has no chords and no worse cycles, $x$ is an interior vertex of $G$ adjacent to $v_3$ with $|L(x)| = 4$ and $y$ is a boundary vertex of $G$ with $|L(y)| = 3$. This creates an end configuration as shown in Figure~\ref{fig:case4a}. In this configuration, the distance from $wz$ to the precolored path $P$ is at most $5$.

\begin{figure}[!htbp]
\centering
\begin{tikzpicture}
\node[bdot, label={below:$p_1$}] at (0,0) (p1) {};
\node[triangle,label={below:$v_1$}] at (1.25,0) (v1) {};
\node[wdot,label={below:$v_2$}] at (2.5,0) (v2) {};
\node[triangle,label={below:$v_3$}] at (3.75,0) (v3) {};
\node[wdot,label={below:$v_4$}] at (5,0) (v4) {};
\node[triangle,label={below:$y$}] at (6.25,0) (y) {};
\node[square, label={above:$x$}] at (3.75,1) (x) {};
\node[triangle,label={above:$w$}] at (5,1) (w) {};
\node[triangle,label={above:$z$}] at (6.25,1) (z) {};
\node[invisnode] at (-.5,0) (a) {};
\node[invisnode] at (5.4,0) (b) {};
\node[invisnode] at (5.85,0) (c) {};
\node[invisnode] at (6.75,0) (d) {};

\draw[black] (a)--(p1)--(v1)--(v2)--(v3)--(v4)--(b);
\draw[black] (c)--(y)--(z);
\draw[red] (w)--(z);
\draw[black] (w)--(x)--(v3);
\draw[black] (d)--(y)--(x);
\draw[black, line width=1.2pt, line cap=round, dash pattern=on 0pt off 4\pgflinewidth] (5.45,0)--(5.8,0);

\end{tikzpicture}
\caption{The end configuration of Case 4 that produces a worse cycle $xyzw$. Note that $|L(v_3)| = 3$ or $4$. }
\label{fig:case4a}
\end{figure}

If $(G',P,L')$ has a bad vertex, then its list size in $G$ cannot be of size $4$ as $G$ has no $(1,1)$-$2$-chords. Thus, any bad vertex must have list size $3$ in $G$ and be adjacent to $v_3$ and $p_3$ by Claim~\ref{cla:v3,p}. As $G$ has no separating $4$-cycles, note that $v_3$ and $p_3$ have at most two shared neighbors with list size $3$. If $(G',P,L')$ has a single bad vertex $y$, then we may precolor $y$ and partition $G'$ into two graphs $G_1'$ and $G_2'$ with precolored paths $P_1'=p_1p_2p_3y$ and $P_2'=yp_3p_4p_5$, respectively.
Let $L_i'$ be $L'$ restricted to $G_i'$ for $i=1,2$.
Then $(G_i',P_i',L_i')$ are both valid targets, and by induction hypothesis both have a proper coloring, then we may combine these two colorings to get a proper $L'$-coloring of $G'-P$, a contradiction. An analogous argument works if $(G',P,L')$ has two bad vertices $y$ and $z$.

If $L(v_4)\neq L(v_2)$, then we color $v_1,v_3$ by some same color $b\in L(v_2)-L(v_4)$ and color $v_2$ by the only choice $a\in L(v_2)-b$.
Let $G'=G-\{v_1,v_2,v_3\}$ and $L'$ be the restriction of $L$ to $G'$ with colors of $v_1,v_2,v_3$ removed from the lists of their neighbors.
If $v_1,v_3$ have a common neighbor $w$, then by the choice of $b$, $|L'(w)|\ge |L(w)|-1$ and hence $(G',P,L')$ remains a target.

If it has a bad edge, then there must be $w,z\in X_{G,L}$ such that each of $w,z$ is adjacent to one of $v_1,v_2,v_3$. Furthermore, as $G$ has no worst cycle, $z$ must be adjacent to $v_3$, and if $w$ is adjacent to $v_2$, then $|L(v_3)| = 4$, see Figure~\ref{fig:case4b1}. The distance between the edge $wz$ and $P$ is at most $3$.

\begin{figure}[!htbp] 
\centering
\begin{tikzpicture}
\node[invisnode] at (-.5,0) (a) {};
\node[bdot, label={below:$p_1$}] at (0,0) (p1) {};
\node[triangle,label={below:$v_1$}] at (1,0) (v1) {};
\node[wdot,label={below:$v_2$}] at (2,0) (v2) {};
\node[square,label={below:$v_3$}] at (3,0) (v3) {};
\node[wdot,label={below:$v_4$}] at (4,0) (v4) {};
\node[invisnode] at (4.5,0) (b) {};
\node[triangle,label={above:$w$}] at (2,1) (x) {};
\node[triangle,label={above:$z$}] at (3,1) (y) {};

\draw[black] (a)--(p1)--(v1)--(v2)--(v3)--(v4)--(b);
\draw[black] (v2)--(x);
\draw[black] (v3)--(y);
\draw[red] (x)--(y);

\draw[black] (p1)--(0.6,0.3);
\draw[black] (x)-- (1.4,0.7);
\draw[black] (y)--(3.3,0.7);
\draw[black] (v4)--(3.7,0.3);
\draw[black, line width=1.2pt, line cap=round, dash pattern=on 0pt off 4.5\pgflinewidth] (.84,0.42)--(1.28,0.64);
\draw[black, line width=1.2pt, line cap=round, dash pattern=on 0pt off 4\pgflinewidth] (3.61,0.39)--(3.35,0.65);

\end{tikzpicture}
\hspace{1cm}
\begin{tikzpicture}
\node[invisnode] at (-.5,0) (a) {};
\node[bdot, label={below:$p_1$}] at (0,0) (p1) {};
\node[triangle,label={below:$v_1$}] at (1,0) (v1) {};
\node[wdot,label={below:$v_2$}] at (2,0) (v2) {};
\node[triangle,label={below:$v_3$}] at (3,0) (v3) {};
\node[wdot,label={below:$v_4$}] at (4,0) (v4) {};
\node[invisnode] at (4.5,0) (b) {};
\node[triangle,label={above:$w$}] at (2,1) (x) {};
\node[triangle,label={above:$z$}] at (3,1) (y) {};

\draw[black] (a)--(p1)--(v1)--(v2)--(v3)--(v4)--(b);
\draw[black] (v1)--(x);
\draw[black] (v3)--(y);
\draw[red] (x)--(y);

\draw[black] (p1)--(0.6,0.3);
\draw[black] (x)-- (1.4,0.7);
\draw[black] (y)--(3.3,0.7);
\draw[black] (v4)--(3.7,0.3);
\draw[black, line width=1.2pt, line cap=round, dash pattern=on 0pt off 4.5\pgflinewidth] (.84,0.42)--(1.28,0.64);
\draw[black, line width=1.2pt, line cap=round, dash pattern=on 0pt off 4\pgflinewidth] (3.61,0.39)--(3.35,0.65);

\end{tikzpicture}
\caption{The end configurations in Case 4 that produce a bad edge $wz$. Note that in the second configuration $|L(v_3)| = 3$ or $4$.}
\label{fig:case4b1}
\end{figure}

It is easy to see that $(G',P,L')$ has no worst cycle. Assume $(G',P,L')$ has a worse cycle $xyzw$ with $z,w$ being interior vertices and $x,y$ boundary vertices. If $x$ and $y$ are both interior vertices of $G$, then $|L(x)|=|L(y)|=4$ with each of $x,y$ adjacent to exactly one of $\{v_1,v_2,v_3\}$. Otherwise, $y$ is a boundary vertex of $G$ with $|L(y)| = 3$ and $x$ is an interior vertex with $|L(x)| = 4$ adjacent to exactly one of $\{v_1,v_3\}$.
The distance between the edge $wz$ and $P$ is at most $5$.

\begin{figure}[!htbp]
\centering
\begin{tikzpicture}[scale = .9]
\node[invisnode] at (-.5,0) (a) {};
\node[bdot, label={below:$p_1$}] at (0,0) (p1) {};
\node[triangle,label={below:$v_1$}] at (1,0) (v1) {};
\node[wdot,label={below:$v_2$}] at (2,0) (v2) {};
\node[triangle,label={below:$v_3$}] at (3,0) (v3) {};
\node[wdot,label={below:$v_4$}] at (4,0) (v4) {};
\node[invisnode] at (4.5,0) (b) {};

\node[square,label={left:$x$}] at (2,1) (x) {};
\node[square,label={right:$y$}] at (3,1) (y) {};
\node[triangle,label={left:$w$}] at (2,2) (w) {};
\node[triangle,label={right:$z$}] at (3,2) (z) {};

\draw[black] (a)--(p1)--(v1)--(v2)--(v3)--(v4)--(b);
\draw[black] (v1)--(x)--(y)--(v3);
\draw[black] (x)--(w);
\draw[black] (z)--(y);
\draw[red] (z)--(w);

\draw[black] (p1)--(0.6,0.3);
\draw[black] (x)-- (1.4,0.7);
\draw[black] (y)--(3.25,0.75);
\draw[black] (v4)--(3.75,0.25);
\draw[black, line width=1.2pt, line cap=round, dash pattern=on 0pt off 4.5\pgflinewidth] (.84,0.42)--(1.28,0.64);
\draw[black, line width=1.2pt, line cap=round, dash pattern=on 0pt off 4\pgflinewidth] (3.66,0.34)--(3.3,0.7);

\end{tikzpicture}
\hspace{.6cm}
\begin{tikzpicture}[scale = .9]
\node[invisnode] at (-.5,0) (a) {};
\node[bdot, label={below:$p_1$}] at (0,0) (p1) {};
\node[triangle,label={below:$v_1$}] at (1,0) (v1) {};
\node[wdot,label={below:$v_2$}] at (2,0) (v2) {};
\node[triangle,label={below:$v_3$}] at (3,0) (v3) {};
\node[wdot,label={below:$v_4$}] at (4,0) (v4) {};
\node[invisnode] at (4.5,0) (b) {};
\node[square,label={left:$x$}] at (2,1) (x) {};
\node[square,label={right:$y$}] at (3,1) (y) {};
\node[triangle,label={left:$w$}] at (2,2) (w) {};
\node[triangle,label={right:$z$}] at (3,2) (z) {};

\draw[black] (a)--(p1)--(v1)--(v2)--(v3)--(v4)--(b);
\draw[black] (v2)--(x)--(y)--(v3);
\draw[black] (x)--(w);
\draw[black] (z)--(y);
\draw[red] (z)--(w);

\draw[black] (p1)--(0.6,0.3);
\draw[black] (x)-- (1.4,0.7);
\draw[black] (y)--(3.25,0.75);
\draw[black] (v4)--(3.75,0.25);
\draw[black, line width=1.2pt, line cap=round, dash pattern=on 0pt off 4.5\pgflinewidth] (.84,0.42)--(1.28,0.64);
\draw[black, line width=1.2pt, line cap=round, dash pattern=on 0pt off 4\pgflinewidth] (3.66,0.34)--(3.3,0.7);

\end{tikzpicture}
\hspace{.6cm}
\begin{tikzpicture}[scale = .9]
\node[invisnode] at (-.5,0) (a) {};
\node[bdot, label={below:$p_1$}] at (0,0) (p1) {};
\node[triangle,label={below:$v_1$}] at (1,0) (v1) {};
\node[wdot,label={below:$v_2$}] at (2,0) (v2) {};
\node[triangle,label={below:$v_3$}] at (3,0) (v3) {};
\node[wdot,label={below:$v_4$}] at (4,0) (v4) {};
\node[invisnode] at (4.5,0) (b) {};
\node[square,label={left:$x$}] at (2,1) (x) {};
\node[square,label={right:$y$}] at (3,1) (y) {};
\node[triangle,label={left:$w$}] at (2,2) (w) {};
\node[triangle,label={right:$z$}] at (3,2) (z) {};

\draw[black] (a)--(p1)--(v1)--(v2)--(v3)--(v4)--(b);
\draw[black] (v1)--(x)--(y)--(v2);
\draw[black] (x)--(w);
\draw[black] (z)--(y);
\draw[red] (z)--(w);

\draw[black] (p1)--(0.6,0.3);
\draw[black] (x)-- (1.4,0.7);
\draw[black] (y)--(3.25,0.75);
\draw[black] (v4)--(3.75,0.25);
\draw[black, line width=1.2pt, line cap=round, dash pattern=on 0pt off 4.5\pgflinewidth] (.84,0.42)--(1.28,0.64);
\draw[black, line width=1.2pt, line cap=round, dash pattern=on 0pt off 4\pgflinewidth] (3.66,0.34)--(3.3,0.7);
\end{tikzpicture}

\begin{tikzpicture}[scale = .9]
\node[invisnode] at (-.5,0) (a) {};
\node[bdot, label={below:$p_1$}] at (0,0) (p1) {};
\node[triangle,label={below:$v_1$}] at (1,0) (v1) {};
\node[wdot,label={below:$v_2$}] at (2,0) (v2) {};
\node[triangle,label={below:$v_3$}] at (3,0) (v3) {};
\node[wdot,label={below:$v_4$}] at (4,0) (v4) {};
\node[triangle,label={below:$y$}] at (5.25,0) (y) {};

\node[invisnode] at (4.4,0) (b) {};
\node[invisnode] at (4.8,0) (c) {};
\node[invisnode] at (5.75,0) (d) {};
\node[square,label={above:$x$}] at (2,1) (x) {};
\node[triangle,label={above:$w$}] at (3,1) (w) {};
\node[triangle,label={above:$z$}] at (4,1) (z) {};

\draw[black] (a)--(p1)--(v1)--(v2)--(v3)--(v4)--(b);
\draw[black] (c)--(y)--(d);
\draw[black] (v1)--(x)--(y);
\draw[black] (x)--(w);
\draw[black] (z)--(y);
\draw[red] (z)--(w);

\draw[black] (p1)--(0.6,0.3);
\draw[black] (x)-- (1.4,0.7);

\draw[black, line width=1.2pt, line cap=round, dash pattern=on 0pt off 4.5\pgflinewidth] (.84,0.42)--(1.28,0.64);

\draw[black, line width=1.2pt, line cap=round, dash pattern=on 0pt off 4\pgflinewidth] (4.41,0)--(4.79,0);
\end{tikzpicture}
\hspace{1cm}
\begin{tikzpicture}[scale = .9]
\node[invisnode] at (-.5,0) (a) {};
\node[bdot, label={below:$p_1$}] at (0,0) (p1) {};
\node[triangle,label={below:$v_1$}] at (1,0) (v1) {};
\node[wdot,label={below:$v_2$}] at (2,0) (v2) {};
\node[triangle,label={below:$v_3$}] at (3,0) (v3) {};
\node[wdot,label={below:$v_4$}] at (4,0) (v4) {};
\node[triangle,label={below:$y$}] at (5.25,0) (y) {};

\node[invisnode] at (4.4,0) (b) {};
\node[invisnode] at (4.8,0) (c) {};
\node[invisnode] at (5.75,0) (d) {};
\node[square,label={above:$x$}] at (2,1) (x) {};
\node[triangle,label={above:$w$}] at (3,1) (w) {};
\node[triangle,label={above:$z$}] at (4,1) (z) {};

\draw[black] (a)--(p1)--(v1)--(v2)--(v3)--(v4)--(b);
\draw[black] (c)--(y)--(d);
\draw[black] (v3)--(x)--(y);
\draw[black] (x)--(w);
\draw[black] (z)--(y);
\draw[red] (z)--(w);

\draw[black] (p1)--(0.6,0.3);
\draw[black] (x)-- (1.4,0.7);

\draw[black, line width=1.2pt, line cap=round, dash pattern=on 0pt off 4.5\pgflinewidth] (.84,0.42)--(1.28,0.64);

\draw[black, line width=1.2pt, line cap=round, dash pattern=on 0pt off 4\pgflinewidth] (4.41,0)--(4.79,0);
\end{tikzpicture}
\caption{The end configurations in Case 4 that produce a worse cycle $xy$. In all of the configurations $|L(v_3)| = 3$ or $4$.}
\label{fig:case4b2}
\end{figure}

If $G'$ has a bad cycle $xyzw$ with $|L'(y)|=2$, then it must be the case that $|L(x)|=|L(z)|=4$, $|L(y)|=|L(w)|=3$ and $xv_1,yv_2,zv_3 \in E(G)$.

Now we further color $y$ with some color $c\in L(y)-a$, and $c=b$ if possible. Let $H=G'-y$ and $M$ be the restriction of $L'$ to $H$ with $c$ removed from $L'(x),L'(z)$.

Suppose that $(H,P,M)$ contains a bad edge. Then it must be incident to exactly one of $x,z$.
We may assume the edge is $xx'$. Then $x'$ must be adjacent to $v_3$, as it is not a boundary vertex in $G$ and it could not be adjacent to any of $y,v_1,v_2$.
Then $xx'v_3zy$ forms a separating $5$-cycle, a contradiction.

Since $(G',P,L')$ has no bad, worse or worst cycle other than $xyzw$, if $(H,P,M)$ has a forbidden cycle, then it must contain exactly one of $x,z$. So, without loss of generality, we assume it contains $x$.
If $(H,P,M)$ contains a bad or worse cycle, then it must contain $xw$, but then $w$ is adjacent to another interior triangle in $G$, a contradiction.
If $(H,P,M)$ contains a worst cycle, then $x$ is adjacent to a triangle $x'$ in $G$, and $x'$ must be adjacent to $v_3$, but then $xx'v_3zy$ forms a separating $5$-cycle with $w$ in the interior, a contradiction.

We first claim that if $z$ is adjacent to some $p_i$, then $i\in\{3,4\}$. 
If $i=5$, the path $p_5zv_3$ is a $(1,3)$-$2$-chord with $|P \cap B(G_{W,2})|=1$, contradicting Lemma~\ref{lem:f-chords}. If $i=1$ or $i=2$, then $zp_1v_1v_2yz$ or $zp_2p_1v_1v_2yz$ is a separating $5$- or $6$-cycle where $x$ with $|L(x)|=4$ is in the interior of this separating cycle, contradicting Lemma~\ref{lem:sep45} and Lemma~\ref{lem:sep6}.

\begin{claim}\label{cla: bad vertices}
Any two of the following cannot occur in $(H,P,M)$ at the same time:
\begin{enumerate}
    \item[(1)] $v_1,p_2$ have a common neighbor $u$ with $|L(u)|=3$ in $G$,
    \item[(2)] $x$ is adjacent to $p_2$,
    \item[(3)] $z$ is adjacent to $p_3$ or $p_4$,
    \item[(4)] $v_3,p_3$ have a common neighbor $v$ with $|L(v)|=3$ in $G$.
\end{enumerate}
\end{claim}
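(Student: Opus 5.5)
We treat the six pairs separately. In every pair we use the current Case~4 setting: $|L(v_2)|=|L(v_4)|=2$; $xv_1,yv_2,zv_3\in E(G)$; $|L(x)|=|L(z)|=4$ and $|L(y)|=|L(w)|=3$; $G$ has no chords (Lemma~\ref{lem:chords}); and $G$ has no separating $4$- or $5$-cycles (Lemma~\ref{lem:sep45}). The idea is that each of (1)--(4) produces a short path from $\{v_1,v_3,x,z\}$ to $P$, and any two such paths close a short cycle around $y$ (or $w$, or $u$, or $v$). Such a cycle is either a triangle, or a separating $4$/$5$-cycle, or a separating $6$-cycle, and the last is constrained by Lemma~\ref{lem:sep6}. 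For the $6$-cycles we also use the distance condition on $X_{G,L}$: $yw$ is an edge of $X_{G,L}$, so no other vertex of $X_{G,L}$ may lie within distance $15$ of it.

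\emph{Pairs with (1).} Suppose (1) and (2) hold. Then $u$ and $x$ are both common neighbours of $v_1$ and $p_2$, so $v_1up_2x$ is a $4$-cycle. Its interior contains $p_1$ or $y$ unless the cycle is the face, which the degree conditions rule out. This gives a separating $4$-cycle, a contradiction. Suppose (1) and (3) hold, with $z$ adjacent to $p_j$, $j\in\{3,4\}$. Consider the cycle $v_1up_2\cdots p_jzyv_2$ (or the variant through $x$). It has length at most $8$ and contains $x$ or $w$ strictly inside. It is cleaner to argue directly on distances: $u\in X_{G,L}$ is at distance at most $2$ from $y$ (via $v_1v_2$), which already violates $15$-sparsity with the $K_2$ component $yw$. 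Suppose (1) and (4) hold. Both $u$ and $v$ lie in $X_{G,L}$. Moreover, $v$ is adjacent to $v_3$, and $v_3$ is adjacent to $v_2$, which is adjacent to $y$. So $v$ is at distance at most $3$ from the edge $yw$, again contradicting the distance condition. The same distance argument disposes of every pair involving (1) or (4), because $u$ and $v$ are list-size-$3$ vertices. We only need to check that they are not $y$ or $w$ themselves: $u=y$ would give $p_2y$ with $y$ adjacent to $v_1$ and $v_2$, a triangle; $v=w$ or $v=y$ similarly forces a triangle or a separating $4$-cycle.

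\emph{Pair (2)--(3).} This is the remaining case and the main obstacle. Here $x$ is adjacent to $p_2$ and $z$ is adjacent to $p_j$ with $j\in\{3,4\}$. Consider the cycle $C=xp_2\cdots p_jzw$, of length $5$ or $6$. If $j=3$, $C$ is a $5$-cycle with $y$ strictly inside, since $y$ is separated from $P$ by the path $x w z$ together with the boundary segment. This contradicts Lemma~\ref{lem:sep45}. If $j=4$, $C=xp_2p_3p_4zw$ is a $6$-cycle. By Lemma~\ref{lem:sep6}, its interior is either empty or one of the two special configurations. If the interior is empty, then $p_3$ has no interior neighbour, and we reduce as in Claim~\ref{cla:v,p}: delete $p_3$, add the edge $p_2p_4$, and obtain a smaller valid target whose coloring extends. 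If the interior is nonempty, it contains a vertex of $X_{G,L}$ adjacent to $w$ or within distance $2$ of $w$, contradicting $15$-sparsity. Should $y$ lie inside $C$ instead, the interior contains the degree-$\ge 3$ vertices $y,v_1,v_2$, which is impossible by Lemma~\ref{lem:sep6} since $|L(v_1)|=3$ on the boundary of $G$. The delicate point is which side of $C$ contains $y$, $v_1$, $v_2$, $v_3$. I expect to settle this with the planar embedding: $v_1,v_2,v_3$ lie on the outer face, so $C$ separates them from its interior.
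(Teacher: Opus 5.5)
There is a genuine gap: your argument rests on the assertion that $yw$ is an edge of $G[X_{G,L}]$, and this is false. The bad cycle $xyzw$ has edges $xy,yz,zw,wx$, so $y$ and $w$ are \emph{opposite} vertices with $\dist_G(y,w)=2$. Both are interior vertices with list size $3$, so $15$-sparsity in fact forces each of them to be a $K_1$ component of $G[X_{G,L}]$. The definition of $k$-sparse places no distance restriction between two $K_1$ components beyond non-adjacency. So $u$ or $v$ lying within distance $3$ of $y$ or $w$ contradicts nothing. Your treatment of every pair containing (1) or (4), other than (1) with (2), therefore collapses.

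No other local obstruction rescues these pairs. The paper does not exclude any of (1)--(4) individually; it treats each afterwards as a possible bad vertex of $(H,P,M)$. The pairs are excluded by reducibility, not by structure. For instance, for (1) together with $z\sim p_4$, the paper proceeds as follows. It deletes $p_1$, precolors $u$ from $M(u)\setminus M(p_2)$ and $z$ from $M(z)\setminus M(p_4)$, and splits along $p_4z$ into targets with precolored paths $up_2p_3p_4z$ and $p_5p_4z$. It then checks these targets are valid: a potential bad vertex is eliminated by a separating $4$-, $5$- or $6$-cycle around $x$ or $y$, by the delete-$p_3$/add-$p_2p_4$ reduction, or by one further split. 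Combining the resulting colorings with the colors already on $v_1,v_2,v_3,y$ gives an $L$-coloring of $G-E(P)$, contradicting minimality. Your proposal contains no argument of this kind.

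Your (2)--(3) case also fails. For $j=3$, $y$ is adjacent to $v_2$, which lies on the outer face and not on $C=xp_2p_3zw$. Hence $y$ is in the \emph{exterior} of $C$, and $C$ may simply be a face; your own closing remark about the embedding shows exactly this. For $j=4$ with nonempty interior, Lemma~\ref{lem:sep6} still allows a single interior vertex $t\in X_{G,L}$ with $N(t)=\{x,p_3,z\}$. Such a $t$ is a $K_1$ component not adjacent to $w$, so again there is no sparsity violation.

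A minor point in the (1)--(2) case: $p_1$ lies on the outer face and cannot be inside $v_1up_2x$. The correct separating $4$-cycle is $v_1p_1p_2x$ with $u$ inside (or $v_1p_1p_2u$ with $x$ inside), as in the paper.
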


\begin{proof}
Suppose $(1)$ and $(2)$ occur at the same time. Then $v_1p_1p_2x$ forms a separating $4$-cycle containing $u$, a contradiction. 

Suppose $(1)$ and $(3)$ occur at the same time. Suppose $v_1,p_2$ has a common neighbor $u$ with $|L(u)|=3$ and $z$ is adjacent to $p_4$. We consider the subgraph $H' = H-\{p_1\}$ with list $M'$ such that $u$ is precolored by a color $\alpha_1 \in M(u)-M(p_2)$ and $M'(v) = M(v)$ otherwise. Then we precolor $z$ by a color $\alpha_2 \in M'(z)-M'(p_4)$ and decompose $H'$ into two graphs $H'_1$ and $H'_2$ where $H'_1$ has precolored path $P'_1=up_2p_3p_4z$ and $H'_2$ has precolored path $P'_2=p_5p_4z$. Let $M'_i$ be the restriction of $M'$ on $H'_i$ for $i=1,2$. We observe that $(H'_1, P'_1, M'_1)$ has no bad edge, bad cycle, worse cycle, or worst cycle. If it has a bad vertex $u'$, then $u'$ could be a common neighbor of $v_1,p_2$ or $u'$ could be a common neighbor of $v_1,z$ with $|L(u')|=3$, or $u'$ could be a common neighbor of $p_i,z$ for $i=2,3$ with $|L(u')|=3$, or $u'$ could be the common neighbor of $u,p_3,z$ with $|L(u')|=4$. If $u'$ is a common neighbor of $v_1,p_2$, then $v_1p_1p_2u'v_1$ is a separating $4$-cycle, a contradiction. 

If $u'$ is a common neighbor of $v_1$ and $z$, then $v_1u'zv_3v_2v_1$ is a 5-cycle in $G$. Since $v_1v_2v_3$ is a subpath on the boundary of $G$, and the vertex $y$ lies in the interior of $C$, it follows that $C$ is a separating 5-cycle, which contradicts Lemma~\ref{lem:sep45}.

If $u'$ is a common neighbor of $p_2$ and $z$, then $p_2p_3p_4zu'$ forms a $5$-cycle. As $G$ has no separating $5$-cycle, $N_{G}(p_3) = \{p_2, p_4\}$ and $N_{G}(p_2) \cap N_{G}(p_4) = \emptyset$. Thus, the graph obtained by deleting $p_3$ from $G$ and adding in the edge $p_2p_4$ is a valid target with a coloring that extends to $G-E(P)$. If $u'$ is a common neighbor of $p_3,z$, we first precolor $u'$ by a color $\alpha'_1 \in M'_1(u')-M'_1(p_3)-M'_1(z)$. Then we further decompose $H'_1$ into $2$ subgraphs a $4$-cycle $p_3p_4zu'p_3$ whose interior is empty and $H''_1$ where $H''_1$ has precolored path $P''_1=up_2p_3u'z$. Let $M''_1$ be the restriction of $M'_1$ on $H''_1$. It is easy to see $(H''_1, P''_1, M''_1)$ has no bad objects and thus is a valid target. Since the $4$-cycle $p_3p_4zu'p_3$ has empty interior, $(H'_1, P'_1, M'_1)$ is a valid target.

If $u'$ is the common neighbor of $u,p_3,z$ with $|L(u')|=4$, then $u'uv_1v_2v_3zu'$ is a separating $6$-cycle of $G$. We have $x$ is in the interior of the separating $6$-cycle with $|L(x)|=4$, contradicting Lemma~\ref{lem:sep6}.

Hence, $(H'_1, P'_1, M'_1)$ is a valid target. Since $|P'_2|=3$, it is easy to see that $(H'_2, P'_2, M'_2)$ has no bad objects and thus is a valid target. Together with colored $v_1,v_2,v_3,y$, we have a coloring of $G-E(P)$, a contradiction. The case of $z$ adjacent to $p_3$ is similar. The proof of the remaining cases are similar to the proof of the case of $(1)$ and $(3)$ occur.
\end{proof}

Suppose that $(H,P,M)$ contains a bad vertex. If the bad vertex is $x$, then by Claim~\ref{cla:v,p}, $x$ is adjacent to $p_2$. Let $H'=H-p_1$. We consider the subgraph $H' = H-\{p_1\}$ with list $M'$ such that $x$ is precolored by a color $\alpha \in M(x)-M(p_2)$ and $M'(v) = M(v)$ otherwise and $P'=p_5\ldots p_2x$. We observe that $(H',P',M')$ has no bad edge, bad cycle, worse cycle, or worst cycle. By Claim \ref{cla: bad vertices}, if $(H',P',M')$ has a bad vertex $x'$, then we have $|M'(x)|=|L(x)|=3$ and $x'$ must be adjacent to $x$ and exactly one of $p_3,p_4,p_5$. Suppose $x'$ is adjacent to $p_i$ for some $i=3,4,5$. We consider the subgraph $H'' = H'-\{p_2,\ldots,p_{i-1}\}$ with list $M''$ such that $x'$ is precolored by a color $\beta \in M'(x')-M'(p_2)-M'(x)$ and $M''(v) = M'(v)$ otherwise and $P''=p_5 \ldots p_i x' x$. We observe that $(H'',P'',M'')$ has no bad edge, bad cycle, worse cycle, or worst cycle. If $i=4,5$, then $|P''|\leq 4$ and there is no bad vertex, and thus $(H'',P'',M'')$ is a valid target. Thus, we must have $x'$ is adjacent to $p_3$. If $(H'',P'',M'')$ has a bad vertex $x''$, then $|M''(x'')|=|L(x'')|=3$ and adjacent to exactly one of $p_3,p_4,p_5$, and exactly one of $x',x$. By the definition of $X_{G,L}$, $x''$ cannot be adjacent to $x'$. Thus, $x''$ is adjacent to $x$. If $x''$ is adjacent to $p_3$ or $p_4$, then we get a separating $4$ or $5$-cycle, a contradiction. Thus, $x''$ is adjacent to $p_5$ and $x''p_5p_4p_3p_2xx''$ is a separating $6$-cycle. By Lemma \ref{lem:sep6}, we have $x'$ is adjacent to $p_5$. Then $p_5x'p_3$ is a $(1,1)$-$2$-chord, contradicting Lemma~\ref{lem:112chord}. 

If $z$ is a bad vertex for $(H,P,M)$, then $z$ is adjacent to $p_i$ for some $i \in \{3,4\}$. If $i=4$, we precolor $z$ with some color $\gamma \in M(z)-M(p_4)$. We decompose $H$ into two graphs $H_1$ and $H_2$ where $H_1$ has precolored path $P_1=p_1p_2p_3p_4z$ and $H_2$ has precolored path $P_2=p_5p_4z$. Let $M_i$ be the restriction of $M$ on $H_i$ for $i=1,2$. By the definition of $X_{G,L}$, $(H_1,P_1,M_1)$ has no bad edge, bad cycle, worse cycle, or worst cycle. If $(H_1,P_1,M_1)$ has a bad vertex $u_1$, then by Claim \ref{cla: bad vertices}, we must have $|L(u_1)|=3$ and $u_1$ is adjacent to $z$ and exactly one of $p_1,p_2,p_3$. If $u_1$ is adjacent to $z$ and $p_1$, then $p_1v_1v_2v_3zu_1p_1$ is a separating $6$-cycle of $G$ whose interior contradicts Lemma~\ref{lem:sep6}. If $u_1$ is adjacent to $z$ and $p_2$, we have a $5$-cycle $p_4p_3p_2u_1zp_4$ and thus its interior must be empty. We precolor $u_1$ by a color $\gamma' \in M_1(u_1)-M_1(p_2)-M_1(z)$. We consider the subgraph $H'_1$ of $H_1$ where $H'_1$ has precolored path $P'_1=p_1p_2u_1z$. Let $M'_1$ be the restriction of $M_1$ to $H'_1$. By the definition of $X_{G,L}$, $(H'_1,P'_1,M'_1)$ has no bad edge, bad cycle, worse cycle, or worst cycle. If it has a bad vertex $u'_1$, then $u'_1$ is the common neighbor of $v_1$ and $z$. Then $d(x)=3 <|L(x)|=4$, contradicting Lemma \ref{lem:degree}. Thus, $(H'_1,P'_1,M'_1)$ is a valid target and together with colored $u_1$, we have a coloring of $H_1$. The case of $u_1$ adjacent to $z$ and $p_3$ is similar. Hence, $(H_1,P_1,M_1)$ is a valid target. Now we will show that $(H_2,P_2,M_2)$ is also a valid target. Since there is no $(2,3^+)$-$2$-chord, we have $|P_2|\leq 3$ and it has no bad vertex. Furthermore, by the sparsity of $X_{G,L}$, we observe that $(H_2,P_2,M_2)$ has no bad edge, bad cycle, worse cycle or worst cycle. Hence, $(H_2,P_2,M_2)$ is a valid target. Together with colored $v_1,v_2,v_3,y$, we have a coloring of $G-E(P)$, a contradiction. The case of $i=3$ is similar to the case of $i=4$.

If neither $x$ nor $z$ is a bad vertex for $(H,P,M)$, then let $u$ be a bad vertex. Since there is no $(1,1)$-$2$-chord, we have $|L(u)|=3$ and $u$ is either the common neighbor of $v_1,p_2$ or the common neighbor of $v_3,p_3$. If $u$ is the common neighbor of $v_1,p_2$, then we consider the subgraph $H' = H-\{p_1\}$ with list $M'$ such that $u$ is precolored by a color $\alpha \in M(u)-M(p_2)$ and $M'(v) = M(v)$ otherwise and $P'=p_5\ldots p_2u$. We observe that $(H',P',M')$ has no bad edge, bad cycle, worse cycle, or worst cycle. By Claim \ref{cla: bad vertices} and the sparsity of $X_{G,L}$, if $(H',P',M')$ has a bad vertex $u'$, we must have either $u'$ is the common neighbor of $v_3,u,p_i$ for some $i \in \{3,4,5\}$ with $|L(u)|=4$ or $u'$ is the common neighbor of $v_1,p_2$ with $|L(u)|=3$. In the former case, $v_1v_2v_3u'uv_1$ forms a separating $5$-cycle, and in the latter case, $v_1p_1p_2u'v_1$ forms a separating $4$-cycle, a contradiction. Thus, $(H',P',M')$ is a valid target. Together with colored $v_1,v_2,v_3,y$, we have a $L$-coloring of $G-E(P)$, a contradiction. 

If $u$ is the common neighbor of $v_3,p_3$, We precolor $u$ by a color $\gamma'' \in M(u)-M(p_3)$. We decompose $H$ into two subgraphs $\tilde{H_1}$ and $\tilde{H_2}$ where $\tilde{H_1}$ has precolored path $\tilde{P_1}=p_1p_2p_3u$ and $\tilde{H_2}$ has precolored path $\tilde{P_2}=p_5p_4p_3u$. Let $\tilde{M_i}$ be restriction of $M$ to $\tilde{H_i}$ for $i=1,2$. We observe that $(\tilde{H_1},\tilde{P_1},\tilde{M_1})$ has no bad edge, bad cycle, worse cycle, or worst cycle. By Claim \ref{cla: bad vertices}, $(\tilde{H_1},\tilde{P_1},\tilde{M_1})$ has no bad vertex. Thus, $(\tilde{H_1},\tilde{P_1},\tilde{M_1})$ is a valid target. Now we will show that $(\tilde{H_2},\tilde{P_2},\tilde{M_2})$ is also a valid target. Since $|\tilde{P_2}|\leq 4$ and there is no $(2,3^+)$-$2$-chord, it has no bad vertex. Moreover, we observe that it has no bad edge, bad cycle, worse cycle or worst cycle. Hence, $(\tilde{H_2},\tilde{P_2},\tilde{M_2})$ is a valid target. Together with colored $v_1,v_2,v_3,y$, we have a coloring of $G-E(P)$, a contradiction.

Hence, $(H,P,M)$ is a valid target, and together with colored $y,v_1,v_2,v_3$, we have a coloring of $G-E(P)$, a contradiction.

It remains to show that $(G',P,L')$ has no bad vertex. Since there is no $(1,1)$-$2$-chord by Lemma \ref{lem:112chord}, if it has a bad vertex, then the bad vertex must have list size $3$ in $G$ and be the common neighbor of one of $p_i$ and at least one of $v_1,v_2,v_3$. By Corollary~\ref{cor:122chord}, it cannot be the common neighbor of one of the $p_i$ and $v_2$, since otherwise there is a $(1,2)$-$2$-chord. Thus, by Claims~\ref{cla:v,p} and~\ref{cla:v3,p}, the bad vertex must be the common neighbor of $p_2$ and $v_1$ or of $p_3$ and $v_3$. In particular, there are at most three such common neighbors otherwise there is a separating $4$-cycle. Assume that $G$ has a bad vertex $x$ adjacent to $p_3$ and $v_3$ and let $y$ and $z$ be the bad vertices adjacent to $v_1$ and $p_2$ or $v_3$ and $p_3$ respectively if they exist. Then we may partition $G'$ into induced subgraphs $G_i'$ for $i=1,2,3$ each with a precolored path.
Namely, $P_1'=p_1p_2y$, $P_2'=yp_2p_3x$ and $P_3'=zp_3p_4p_5$ (where $P_1'=p_1p_2p_3x$ if $y$ does not exist and $P_3'=xp_3p_4p_5$ if $z$ does not exist). As each $|P_i'| <5$ and $(G',P,L')$ has no bad edge, each $(G_i', P_i', L_i')$ is a valid target.
By minimality of $G$, we may color each $G_i'- E(P_i')$ and combine them to get a proper coloring of $G'$, a contradiction. 

Hence, the only bad vertex $x$ of $(G',P,L')$ is a common neighbor of $v_1$ and $p_2$ with $|L(x)| = 3$. Let $G'' = G'-p_1$ with precolored path $P'' = xp_2p_3p_4p_5$ and $L''(x) = L'(x)-L(p_1)$. As $(G',P,L')$ has no bad edge, bad cycle, worse cycle, or worst cycle, it suffices to show that $G''$ has no bad vertex $w$. By Lemmas~\ref{lem:f-chords} and~\ref{lem:332chord}, $w$ must be an interior vertex of $G$. As $G'$ has no other bad vertex besides $x$ and does not have a bad edge, $|L''(w)| = 3$ and must be adjacent to $x$. If $|L(w)| = 4$, then $w$ is adjacent to $v_3$ and exactly one of $p_3$ and $p_4$. Note that if $w$ is adjacent to $p_4$, then $N_{G}(p_3) = \{p_2, p_4\}$. The graph obtained from $G$ by deleting $p_3$ and adding in the edge $p_2p_4$ with precolored path $p_1p_2p_4p_5$ is a valid target and has a coloring that can be extended to $G-P$. Hence, if $|L(w)| = 4$ it is adjacent to $x$, $v_3$, and $p_3$. In this case we may partition $G''$ into induced subgraphs $G_1''$ and $G_2''$ with precolored paths $P_1'' = xp_2p_3w$ and $P_2'' = wp_3p_4p_5$ respectively. It is easily checked that both $(G_1'',P_1'',L_1'')$ and $(G_2'',P_2'',L_2'')$ are valid targets that combine to give a valid coloring of $G'' - P''$, a contradiction.

Thus, $|L(w)| = 3$ and $w$ must be adjacent to $x$ and $p_3$ by the same argument as above. We partition $G''$ into induced subgraphs $G_1''$ and $G_2''$ with precolored paths $P_1'' = xp_2p_3w$ and $P_2'' = xwp_3p_4p_5$ respectively. As $(G_1'',P_1'',L_1'')$ is trivially a valid target, it remains to check that $(G_2'',P_2'',L_2'')$ is valid. As with $G''$, it suffices to show $G_2''$ has no bad vertex $v$. If $v$ is on the boundary of $G$, it must be consecutive on the boundary to $p_5$ and be adjacent to $w$ creating the $5$-cycle $p_3p_4p_5vw$. Considering the graph obtained from $G$ by deleting $p_4$ and adding the edge $p_3p_5$, we obtain a valid target whose coloring can be extended to $G-P$. Therefore, $v$ is an interior vertex of $G$. As $v$ is not a bad vertex of $G''$, $v$ must be adjacent to $w$ and $p_4$. Furthermore, by the proximity of $v$ to $xw \in X_{G,L}$, $|L(v)| = 4$ and $v$ must be adjacent to at least one of $v_1$ and $v_3$. In this case, we repartition, $G''$ into the induced subgraphs $G_1''$, $G_2''$, and $G_3''$ with precolored paths $P_1'' = xp_2p_3w$, $P_2'' = wp_3p_4v$, and $P_3'' = vp_4p_5$. Each $(G_i'',P_i'', L_i'')$ is a valid target with valid colorings that give a valid coloring of  $G'' - P''$, a contradiction.

\section{Combinations of end configurations} \label{sec4}
In the previous section, we saw that an end configuration will produce either a bad edge or a worse cycle. Let $wz$ be a bad edge or the interior edge of a worse cycle produced by the end configuration.

We now consider peeling vertices from both sides of the precolored path $P$. Each side must produce one of the end configurations demonstrated in the previous section. We say that the two end configurations can be combined if they share the same edge $wz$; otherwise, we say that they cannot be combined.

Let $xyzw$ be a worse cycle produced by peeling off the given set of vertices stated in the previous section, where $w,z$ are in the interior and $x,y$ are on the boundary. We say that an end configuration produces a \textit{worse cycle of type I} if it produces a worse cycle $xywz$ where at least one of $x,y \in B(G)$. We say that an end configuration produces a \textit{worse cycle of type II} if it produces a worse cycle $xywz$ where $x,y \in \int(G)$.

\begin{lemma}
Two end configurations in which
\begin{enumerate}
    \item both produce bad edges, or
    \item both produce worse cycles of type I, or
    \item one produces a bad edge and the other produces a worse cycle of type I
\end{enumerate}
cannot be combined.    
\end{lemma}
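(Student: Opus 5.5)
The plan is to argue by contradiction using the planar geometry of the two end configurations around the shared edge $wz$. Suppose the configuration obtained by peeling from the $p_1$ side and the one obtained by peeling from the $p_k$ side share the same edge $wz$ with $w,z\in X_{G,L}$. In every configuration of Figures~\ref{fig:case1}--\ref{fig:case4b2} where $wz$ is a bad edge or lies in a worse cycle of type I, each of $w$ and $z$ is joined to $B(G)$ by a path of length at most two. For a worse cycle of type I the path is through $x$ or $y$, and at least one of these lies on $B(G)$. For a bad edge the paths go through neighbors among $v_1,v_2,v_3$.

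From the $p_1$ side I get a path $Q_1$ from $p_1$ along $B(G)$ (through some of $v_1,\dots,v_4$ and possibly a further boundary vertex $y$) to a boundary vertex $b_1$. This boundary vertex is adjacent to $w$ or $z$, or adjacent to an interior list-size-$4$ vertex adjacent to $w$ or $z$. Symmetrically, from the $p_k$ side I get $Q_2$ ending at a boundary vertex $b_2$, approached along $v_s,v_{s-1},\dots$. The union of $Q_1$, $P$, $Q_2$, the short connectors to $wz$, and the edge $wz$ contains a cycle $C$ through $P$.

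First key step: show $C$ must have empty interior, or otherwise be very constrained. The length bound is the $k=15$ sparsity applied to the boundary (and interior) vertices with list size $3$ that the configurations force near $P$. Combined with Lemmas~\ref{lem:sep45} and~\ref{lem:sep6}, this pins down the interior. I actually expect the cleaner route to avoid computing lengths of $C$. Instead I would note that the end configurations force specific boundary vertices near $P$ to have list size $3$ and be adjacent to vertices of $X_{G,L}$, namely the triangles $v_1$, $v_3$, $y$ in the figures. Two such $3$-list boundary vertices on opposite sides are then joined through $w$ or $z$ by an interior path of length at most $4$, creating a $j$-chord with both ends of list size $\ge 3$. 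I then rule this out with the chord lemmas: Lemma~\ref{lem:chords} (no chords), Lemma~\ref{lem:332chord}, Lemma~\ref{lem:233chord}, or Corollary~\ref{cor:122chord}. Where the interior vertex adjacent to $wz$ has list size $4$, I would use the degree condition (Lemma~\ref{lem:degree}) on it to force a separating $4$-, $5$- or $6$-cycle.

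Second step: a case analysis over pairs, organized as stated. For pairs of bad edges I use Figure~\ref{fig:case4b1} on both sides. Here $z$ is adjacent to $v_3$ on one side, and on the other side one of $w,z$ is adjacent to $v_{s-2}$ or $v_{s-1}$. In each subcase this gives a short cycle through $wz$ and the boundary, which is a separating $4$/$5$/$6$-cycle enclosing $P$'s complement side. This contradicts Lemmas~\ref{lem:sep45}--\ref{lem:sep6}, or gives a $(3,3)$-$3$-chord $v_3zw v'$ that yields a worse cycle or a bad cycle already present in $G$. For type I worse cycles, the boundary vertex $y$ (with $|L(y)|=3$) adjacent to $z$ on one side and the analogous vertex on the other side, together with $wz$, produce a $4$-cycle $y z w y'$ or a $3$-chord $yzwy'$. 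If $y\ne y'$ are both on $B(G)$ the latter is a $(3,3)$-$3$-chord with interior list sizes $3$, which I kill by the argument of Lemma~\ref{lem:233chord} (feasibility plus a bad vertex leading to a separating $6$-cycle). If the $4$-cycle uses two boundary vertices it is itself a worse cycle of $G$, which contradicts validity. The mixed case combines both arguments.

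The main obstacle is the bookkeeping. In the combined picture the two sides may overlap (e.g., $s$ small, so $v_1,\dots,v_4$ from one side coincide with $v_s,\dots$ from the other), and $wz$ may be attached in reversed orientation. I would handle this by first using planarity to fix which of $w,z$ is on the $p_1$ side. I would then use the distance bounds of at most $5$ from the previous section to show that the whole configuration closes up into a cycle of length at most $6$ around a face set. That lets Lemma~\ref{lem:sep6} and the sparsity of $X_{G,L}$ finish each case.
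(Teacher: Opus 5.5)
There is a genuine gap. The proposal never isolates the observation that proves the lemma, and the tools you substitute do not apply to the situations you reduce to.

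The missing idea is an orientation argument followed by a $2$-chord through a single vertex of $wz$. In every bad-edge configuration (Figure~\ref{fig:case4b1}), $w$ is adjacent to a peeled boundary vertex $v_1$ or $v_2$ (list size $3$ or $2$), and $z$ is adjacent to $v_3$. In every type~I configuration, $z$ is adjacent to a boundary vertex $y$ with $|L(y)|=3$, while $w$ is attached through a vertex $x$ with $|L(x)|=4$.

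Suppose both sides share $wz$. Then the attachments from the $p_1$ side and from the $p_k$ side cannot be matched crosswise, with $z$ taking the attachment on one side and $w$ on the other. Their ends would interleave on $B(G)$, so the two attachment paths would have to meet, giving a triangle with $wz$. So the \emph{same} vertex of $wz$ receives the corresponding attachment from both sides. This yields a $2$-chord $u\,w\,u'$, or $y\,z\,y'$, or $v_3\,z\,y'$ in the mixed case, whose middle vertex lies in $X_{G,L}$. Checking list sizes, it is either a $(2,2^+)$-$2$-chord, excluded by Lemma~\ref{lem:f-chords}, or a feasible $(3,3^+)$-$2$-chord with middle list size $3$, excluded by Lemma~\ref{lem:332chord}. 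That is the paper's entire proof. Your remark about using planarity to fix which of $w,z$ is on the $p_1$ side is the right instinct, but this alignment is the conclusion you need to draw from it.

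Your case analysis instead works with exactly the crosswise matching ($v_3zwv'$, $yzwy'$), which planarity rules out. The tools you cite fail there anyway:
\begin{itemize}
\item No lemma excludes a $(3,3)$-$3$-chord whose interior is an edge of $X_{G,L}$. The argument of Lemma~\ref{lem:233chord} needs an end of list size at most $2$ so that $|P_W|=5$. With two list-$3$ ends, $P_W$ can have six vertices and is not even a target.
\item The cycle formed by $wz$ and the boundary stretch between the two sides has no length bound. The bounds $\le 5$ from Section~\ref{sec3} measure the distance from $wz$ to $P$, not the length of such a cycle, so Lemmas~\ref{lem:sep45} and~\ref{lem:sep6} do not apply.
\item The $15$-sparsity gives nothing here. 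Boundary vertices are not in $X_{G,L}$, and only the single component $wz$ is involved.
\item A $3$-chord $v_3zwv'$ is a bad or worse cycle only in the degenerate case $v_3v'\in E(B)$.
\end{itemize}
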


\begin{proof}
Suppose they can be combined. Then we observe that there will be a $(2,2^+)$-$2$-chord or a feasible $(3,3^+)$-$2$ chord $uvw$ with $|L(v)|=3$, contradicting Lemma~\ref{lem:f-chords} and Lemma~\ref{lem:332chord}.
\end{proof}

\begin{figure}[!htbp]
\centering
\makebox[\textwidth][c]{
\hfill
\begin{tikzpicture}
\node[invisnode] at (1,1.5) (a) {};
\node[bdot,label={left:$p_1$}] at (1,1) (p1) {};
\node[triangle,label={left:$v_1$}] at (1,0) (v1) {};
\node[wdot,label={left:$v_2$}] at (1,-1) (v2) {};
\node[square,label={left:$v_3$}] at (1,-2) (v3) {};
\node[wdot,label={left:$v_4$}] at (1,-3) (v4) {};
\node[invisnode] at (1,-3.5) (b) {};

\node[triangle,label={135:$w$}] at (2,-1) (w) {};
\node[triangle,label={below:$z$}] at (2,-2) (z) {};

\draw[black] (a)--(p1)--(v1)--(v2)--(v3)--(v4)--(b);
\draw[black] (v2)--(w);
\draw[black] (v3)--(z);
\draw[black] (w)--(z);

\draw[black] (p1)--(1.3,0.4);
\draw[black] (w)--(1.7,-0.4);
\draw[black] (z)--(1.7,-2.3);
\draw[black] (v4)--(1.3,-2.7);

\draw[black, line width=1.2pt, line cap=round,
  dash pattern=on 0pt off 4.5\pgflinewidth]
  (1.42,0.16)--(1.64,-0.28);

\draw[black, line width=1.2pt, line cap=round,
  dash pattern=on 0pt off 4\pgflinewidth]
  (1.39,-2.61)--(1.65,-2.35);

\node[invisnode] at (3,1.5) (a) {};
\node[bdot,label={right:$p_k$}] at (3,1) (p1) {};
\node[triangle,label={right:$v'_1$}] at (3,0) (v1) {};
\node[wdot,label={right:$v'_2$}] at (3,-1) (v2) {};
\node[square,label={right:$v'_3$}] at (3,-2) (v3) {};
\node[wdot,label={right:$v'_4$}] at (3,-3) (v4) {};
\node[invisnode] at (3,-3.5) (b) {};

\node[triangle] at (2,-1) (w) {};
\node[triangle,label={below:$z$}] at (2,-2) (z) {};

\draw[black] (a)--(p1)--(v1)--(v2)--(v3)--(v4)--(b);
\draw[black] (v2)--(w);
\draw[black] (v3)--(z);

\draw[black] (p1)--(2.7,0.4);
\draw[black] (w)--(2.3,-0.4);
\draw[black] (z)--(2.3,-2.3);
\draw[black] (v4)--(2.7,-2.7);

\draw[black, line width=1.2pt, line cap=round,
  dash pattern=on 0pt off 4.5\pgflinewidth]
  (2.58,0.16)--(2.36,-0.28);

\draw[black, line width=1.2pt, line cap=round,
  dash pattern=on 0pt off 4\pgflinewidth]
  (2.61,-2.61)--(2.35,-2.35);
\end{tikzpicture}

\hfill

\begin{tikzpicture}
\node[invisnode] at (1,0.5) (a) {};
\node[bdot,label={left:$p_1$}] at (1,0) (p1) {};
\node[square,label={left:$v_1$}] at (1,-1) (v1) {};
\node[triangle,label={left:$v_2$}] at (1,-2) (v2) {};
\node[invisnode] at (1,-2.5) (b) {};

\node[triangle,label={135:$w$}] at (2,-1) (w) {};
\node[triangle,label={below:$z$}] at (2,-2) (z) {};

\draw[black] (a)--(p1)--(v1)--(v2)--(b);
\draw[black] (v1)--(w);
\draw[black] (v2)--(z);
\draw[black] (w)--(z);

\node[invisnode] at (3,1) (a) {};
\node[bdot,label={right:$p_k$}] at (3,0.6) (p1) {};
\node[triangle,label={right:$v'_1$}] at (3,-0.2) (v1) {};
\node[wdot,label={right:$v'_2$}] at (3,-1) (v2) {};
\node[invisnode] at (3,-1.32) (b) {};
\node[invisnode] at (3,-1.68) (c) {};
\node[triangle,label={right:$y'$}] at (3,-2) (y) {};
\node[invisnode] at (3,-2.4) (d) {};

\node[square,label={left:$x'$}] at (2,-0.2) (x) {};
\node[triangle] at (2,-1) (w) {};
\node[triangle,label={below:$z$}] at (2,-2) (z) {};

\draw[black] (a)--(p1)--(v1)--(v2)--(b);
\draw[black] (c)--(y)--(d);
\draw[black] (v1)--(x)--(w);

\draw[black] (p1)--(2.7,0.36);
\draw[black] (x)--(2.3,0.04);
\draw[black] (z)--(y)--(x);

\draw[black, line width=1.2pt, line cap=round,
  dash pattern=on 0pt off 4\pgflinewidth]
  (3,-1.36)--(3,-1.68);

\draw[black, line width=1.2pt, line cap=round,
  dash pattern=on 0pt off 4\pgflinewidth]
  (2.61,0.288)--(2.35,0.08);
\end{tikzpicture}

\hfill

\begin{tikzpicture}

\node[invisnode] at (1,1.5) (a) {};
\node[bdot,label={left:$p_1$}] at (1,1) (p1) {};
\node[triangle,label={left:$v_1$}] at (1,0) (v1) {};
\node[wdot,label={left:$v_2$}] at (1,-1) (v2) {};
\node[triangle,label={left:$v_3$}] at (1,-2) (v3) {};
\node[wdot,label={left:$v_4$}] at (1,-3) (v4) {};
\node[invisnode] at (1,-3.5) (b) {};

\node[triangle] at (2,-1) (w) {};
\node[triangle,label={below:$z$}] at (2,-2) (z) {};

\draw[black] (a)--(p1)--(v1)--(v2)--(v3)--(v4)--(b);
\draw[black] (v1)--(w);
\draw[black] (v3)--(z);
\draw[black] (w)--(z);

\draw[black] (p1)--(1.3,0.4);
\draw[black] (w)--(1.7,-0.4);
\draw[black] (z)--(1.7,-2.3);
\draw[black] (v4)--(1.3,-2.7);

\draw[black, line width=1.2pt, line cap=round,
  dash pattern=on 0pt off 4.5\pgflinewidth]
  (1.42,0.16)--(1.64,-0.28);

\draw[black, line width=1.2pt, line cap=round,
  dash pattern=on 0pt off 4\pgflinewidth]
  (1.39,-2.61)--(1.65,-2.35);

  \node[invisnode] at (3,2.5) (a) {};
\node[bdot,label={right:$p_k$}] at (3,2) (p1) {};
\node[triangle,label={right:$v'_1$}] at (3,1) (v1) {};
\node[wdot,label={right:$v'_2$}] at (3,0) (v2) {};
\node[square,label={right:$x'$}] at (3,-1) (x) {};
\node[triangle,label={right:$y'$}] at (3,-2) (y) {};
\node[invisnode] at (3,-2.5) (b) {};

\node[triangle,label={above:$w$}] at (2,-1) (w) {};
\node[triangle,label={below:$z$}] at (2,-2) (z) {};

\draw[black] (a)--(p1)--(v1)--(v2)--(x)--(y)--(b);
\draw[black] (x)--(w);
\draw[black] (y)--(z);

\end{tikzpicture}
\hfill
}

\caption{ Examples of combinations of two end configurations in which both produce bad edges (left figure), both produce worse cycles of type I (middle figure), and one produces a bad edge and the other produces a worse cycle of type I (right figure).   }
\end{figure}

Given an end configuration that produces a bad edge $wz$, let $S(wz)$ be the set of peeled vertices in the end configuration. Let $v_w=N(w) \cap S(wz)$ and $v_z=N(z) \cap S(wz)$. For example, in the first end configuration of Figure~\ref{fig:case4b1}, $S(wz) = \{v_1, v_2, v_3\}$, $v_w=v_2$, and $v_z=v_3$.

Similarly, let $S(xywz)$ be the set of peeled vertices in an end configuration that produces the worse cycle $xywz$. If $y \in B(G)$ (resp.~$x \in B(G)$), then we set $v_y=y$ (resp.~$v_x = x$). Otherwise, let $\{v_x\}=N(x) \cap S(xywz)$ and $\{v_y\}=N(y) \cap S(xywz)$ as before. For example, in the first end configuration of Figure~\ref{fig:case31}, $S(xwyz) = \{v_1, v_2\}$, $v_x=v_1$, and $v_y=v_2$. In the second end configuration of Figure~\ref{fig:case31}, $S(xwyz) = \{v_1, v_2\}$, $v_x=v_1$, and $v_y=y$.

In the remaining proofs, if $v$ is a precolored vertex, we use $v^+$ to denote its neighbor that has a list of size two, which may or may not exist.

\begin{lemma}
An end configuration that produces a bad edge cannot be combined with one that produces a worse cycle of type II.
\end{lemma}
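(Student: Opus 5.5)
Suppose, for contradiction, that the two configurations are combined along a common edge $wz$. On one side of $P$ (say the $p_1$ side) the end configuration produces $wz$ as a bad edge. By the list of bad-edge configurations in Case 4 (Figure~\ref{fig:case4b1}), $w$ and $z$ are interior vertices of $X_{G,L}$, and each is adjacent to a peeled vertex in $S(wz)\subseteq\{v_1,v_2,v_3\}$. Here $v_z=v_3$, and $v_w\in\{v_1,v_2\}$.

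On the other side (the $p_k$ side) the configuration produces a worse cycle $x'y'zw$ of type II. By the type II configurations of Figure~\ref{fig:case31} and Figure~\ref{fig:case4b2}, $x',y'$ are interior vertices with $|L(x')|=|L(y')|=4$, and $x'y'$ is an edge. Moreover $x'$ and $y'$ are adjacent to peeled vertices $v_{x'},v_{y'}\in\{v'_1,v'_2,v'_3\}$ on the $p_k$ side, with $x'w,y'z\in E(G)$.

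The plan is to use the cycles formed by $w,z$, their neighbours and the boundary, and derive a contradiction from Lemmas~\ref{lem:sep45} and~\ref{lem:sep6}. Since $w$ is interior and $|L(w)|=3$, Lemma~\ref{lem:degree} gives $d(w)\ge 3$. Its known neighbours are $z$, $v_w$ and $x'$, and likewise $z$ has neighbours $w$, $v_3$ and $y'$. Consider the closed walk
\[
v_w\, w\, x'\, v_{x'} \cdots v'_1\, p_k \cdots p_1\, v_1 \cdots v_w .
\]
Together with the $4$-cycle $zwx'y'$ and the path $v_3 z y' v_{y'}$, it splits the disc into regions. The key step is to look at the cycle $C=v_w w z v_3 \cdots v_w$, which has length $4$ or $5$ (using $v_w v_{w+1}\cdots v_3$ along the boundary), and the cycle $C'=w x' y' z$. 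Planarity forces the edges $wx'$ and $zy'$ to lie on the side of $wz$ away from the boundary segment $v_w\cdots v_3$. Similarly, the $p_k$-side edges $x'v_{x'}$ and $y'v_{y'}$ lie on the side of $x'y'$ away from $wz$.

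Now consider the cycle through $v_3, z, y', v_{y'}$ and the boundary segment between $v_{y'}$ and $v_3$ that avoids $P$. Either this segment is short, in which case the cycle is a short cycle enclosing $w$ or $x'$, contradicting Lemma~\ref{lem:sep45} or Lemma~\ref{lem:sep6}. Or the boundary between $v_3$ and $v_{y'}$ is long, and then the interior region bounded by $v_w w x' v_{x'}$, the corresponding cycle through $v_3 z y' v_{y'}$, and the two boundary arcs still contains the face structure around $wz$.

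The cleaner route I will try first is to show that $z$ (or $w$) has exactly the listed neighbours and that the $4$-cycle $wx'y'z$ together with the $4$- or $5$-cycle $v_w w z v_3(\ldots)$ forces $d(w)=3$ and $d(z)=3$. Then I locate the $4$-faces and identify a chord of a type excluded by Lemma~\ref{lem:f-chords}, Lemma~\ref{lem:332chord} or Corollary~\ref{cor:122chord}. The intended candidates are the $3$-chords $v_w w x' v_{x'}$ and $v_3 z y' v_{y'}$, or a $2$-chord through $w$ or $z$ once $v_{x'}=v_w$-type coincidences occur.

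Alternatively, I will use the distance condition on $X_{G,L}$. The vertices $x'$, $y'$ and the peeled vertices on both sides all lie within distance at most $5$ of $P$ by the bounds recorded for each configuration, so the $X_{G,L}$ components near $P$ are constrained. If $v_1$ or $v'_1$ has list size $3$, it is a boundary vertex and thus not in $X_{G,L}$, so no conflict arises there. I will mainly use $15$-sparsity to rule out additional list-size-$3$ interior vertices that would otherwise be needed to meet the degree conditions inside these short cycles, in the style of the proofs of Lemma~\ref{lem:233chord} and Claim~\ref{cla:v,p}.

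The main obstacle is the case analysis over which of the two bad-edge configurations ($v_w=v_1$ or $v_w=v_2$) is paired with which of the three type II configurations ($v_{x'},v_{y'}$ equal to $v'_1,v'_2$, to $v'_2,v'_3$, or to $v'_1,v'_3$). In each pairing I expect to show that the cycle $v_3zy'v_{y'}\cdots$ or $v_w w x' v_{x'}\cdots$ can be closed through $P$ into a feasible chord $W$ of $G$ with $|P_W|\le 4$, contradicting Lemma~\ref{lem:feasible}. Failing that, it closes into a separating $4$-, $5$- or $6$-cycle whose interior contains a list-size-$4$ vertex ($x'$ or $y'$), contradicting Lemma~\ref{lem:sep45} or Lemma~\ref{lem:sep6}. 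Verifying that the relevant chord is feasible, rather than just present, is where I expect the most care to be needed.
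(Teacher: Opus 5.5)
\paragraph{Gap.} Your plan looks for a purely structural contradiction: a short separating cycle, an excluded chord, or a feasible chord with $|P_W|\le 4$. The combined configuration does not force any of these, so the proposal never actually reaches a contradiction.

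\begin{itemize}
\item The only bounded cycles it creates are the face $wx'y'z$, the $4$- or $5$-cycle through $v_w,w,z,v_3$, and the $4$- or $5$-cycle through $x',y',v_{y'},\dots,v_{x'}$. All of these may have empty interior.
\item The cycle $v_w\,w\,x'\,v_{x'}\cdots p_k\cdots p_1\cdots v_w$ has length at least $|P|+4$, so it is not short in general. The cycle through $v_3\,z\,y'\,v_{y'}$ and the far boundary arc is unbounded.
\item The distance condition gives nothing either. Here $wz$ is the only component of $X_{G,L}$ involved, and $|L(x')|=|L(y')|=4$.
\item Your candidate $3$-chord $v_w\,w\,x'\,v_{x'}$ is not excluded by Lemma~\ref{lem:233chord}, which needs both interior vertices of the chord to have list size $3$.
\item Its $P_W$ also absorbs the list-size-$2$ boundary neighbours $v_2$ and/or $v'_2$. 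So $|P_W|\ge 5$ and Lemma~\ref{lem:feasible} says nothing. For $v_w=v_1$, $v_{x'}=v'_1$ it would even have six vertices, so it is not a target at all.
\end{itemize}

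Hence the step ``close the cycle into a feasible chord with $|P_W|\le 4$ or a short separating cycle'' fails.

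\paragraph{Missing idea.} This combination is not structurally impossible. The contradiction comes from actually colouring it, using minimality.

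\begin{enumerate}
\item Peel and colour both $S(wz)$ and $S(x'y'zw)$ as in Section~\ref{sec3}.
\item Cut what remains along the face $wx'y'z$ into three pieces:
\begin{itemize}
\item $G_1$, bounded by $w\,x'\,v_{x'}\cdots p_k\cdots p_1\cdots v_w\,w$;
\item the face $G_2=wx'y'z$;
\item $G_3$, bounded by $z\,y'\,v_{y'}\cdots v_z\,z$ on the far side.
\end{itemize}
\item In $G_1'$ the $X$-edge $wz$ is broken, since $z\notin G_1$ and $w$ lies on the boundary. So $(G_1',P,L_1')$ has no bad edge and no bad, worse or worst cycle. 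A bad vertex would give a $(1,1)$-$2$-chord, excluded by Lemma~\ref{lem:112chord}. Colour $G_1'$ by minimality.
\item Colour $G_2'$ with $w,x'$ precoloured.
\item Colour $G_3'$ with the short precoloured path $z^+zy'y'^+$; this is a valid target by the sparsity of $X_{G,L}$.
\end{enumerate}

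The union of these colourings is an $L$-colouring of $G-E(P)$, contradicting the choice of $G$.
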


\begin{proof}
Suppose that they can be combined. Without loss of generality, assume that the worse cycle $xywz$ is produced on the side of $p_1$ and the bad edge $wz$ is produced on the side of $p_k$. Let $G_1$ be the induced subgraph with boundary $B(G_1)=wxv_x B(G) p_1 \cdots p_k B(G) v_w w$ where $a B(G) b$ denotes the shortest path on the boundary of $G$ between the boundary vertices $a$ and $b$. Let $G_2$ be the subgraph with boundary $B(G_2)=xyzwx$. Let $G_3$ be the subgraph with boundary $B(G_3)=zyv_y B(G-G_1) v_z z$. Now color and peel off $S(xywz)$ and $S(wz)$ as described in the previous section. Let $L'$ be the resulting list given in the previous section. Let $G'_i = G_i-S(xywz)-S(wz)$ and $L'_i$ be $L'$ restricted to $G_i'$ for $i=1,2,3$. By the definition of $X_{G,L}$, $(G'_1,P,L'_1)$ has no bad edge and no bad, worse or worst cycle. If it has a bad vertex $v$ such that $|L'_1(v)|=3$ and $|N(v) \cap P|=2$, then there is a $(1,1)$-$2$-chord, contradicting Lemma \ref{lem:112chord}. Thus $(G'_1,P,L'_1)$ is a valid target and thus has a proper $L'_1$-coloring $\phi_1$. Let $L''_2(u)=\{\phi_1(u)\}$ if $u=x,w$ and $L''_2(u)=L'_2(u)$ otherwise. We observe that $(G'_2, P_2, L''_2)$ is a valid target with $P_2=xw$ and thus has a proper $L''_2$-coloring $\phi_2$. Let $L''_3(u)=\{\phi_2(u)\}$ if $u=z,y$ and $L''_3(u)=L'_3(u)$ otherwise. By the definition of $X_{G,L}$, $(G'_3, P_3, L''_3)$ is a valid target with $P_3=z^+zyy^+$ and thus has a proper $L''_3$-coloring $\phi_3$. Then $\phi_1 \cup \phi_2 \cup \phi_3$ gives a proper $L'$-coloring of $G-E(P)-S(xyzw)-S(zw)$, a contradiction.
\end{proof}

\begin{figure}[!htbp]
\centering

\begin{tikzpicture}
\node[invisnode] at (0,1.5) (a) {};
\node[bdot,label={left:$p_1$}] at (0,1) (p1) {};
\node[triangle,label={left:$v_1$}] at (0,0) (v1) {};
\node[wdot,label={left:$v_2$}] at (0,-1) (v2) {};
\node[triangle,label={left:$v_3$}] at (0,-2) (v3) {};
\node[wdot,label={left:$v_4$}] at (0,-3) (v4) {};
\node[invisnode] at (0,-3.5) (b) {};

\node[square,label={above:$x$}] at (1,-1) (x) {};
\node[square,label={below:$y$}] at (1,-2) (y) {};
\node[triangle,label={above:$w$}] at (2,-1) (w) {};
\node[triangle,label={below:$z$}] at (2,-2) (z) {};

\draw[black] (a)--(p1)--(v1)--(v2)--(v3)--(v4)--(b);
\draw[black] (v1)--(x)--(y)--(v3);
\draw[black] (x)--(w);
\draw[black] (z)--(y);
\draw[black] (z)--(w);

\draw[black] (p1)--(0.3,0.4);
\draw[black] (x)--(0.7,-0.4);
\draw[black] (y)--(0.75,-2.25);
\draw[black] (v4)--(0.25,-2.75);

\draw[black, line width=1.2pt, line cap=round,
  dash pattern=on 0pt off 4.5\pgflinewidth]
  (0.42,0.16)--(0.64,-0.28);

\draw[black, line width=1.2pt, line cap=round,
  dash pattern=on 0pt off 4\pgflinewidth]
  (0.34,-2.66)--(0.7,-2.3);

\node[invisnode] at (3,1.5) (a) {};
\node[bdot,label={right:$p_k$}] at (3,1) (p1) {};
\node[triangle,label={right:$v'_1$}] at (3,0) (v1) {};
\node[wdot,label={right:$v'_2$}] at (3,-1) (v2) {};
\node[square,label={right:$v'_3$}] at (3,-2) (v3) {};
\node[wdot,label={right:$v'_4$}] at (3,-3) (v4) {};
\node[invisnode] at (3,-3.5) (b) {};

\node[triangle] at (2,-1) (w) {};
\node[triangle,label={below:$z$}] at (2,-2) (z) {};

\draw[black] (a)--(p1)--(v1)--(v2)--(v3)--(v4)--(b);
\draw[black] (v2)--(w);
\draw[black] (v3)--(z);

\draw[black] (p1)--(2.7,0.4);
\draw[black] (w)--(2.3,-0.4);
\draw[black] (z)--(2.3,-2.3);
\draw[black] (v4)--(2.7,-2.7);

\draw[black, line width=1.2pt, line cap=round,
  dash pattern=on 0pt off 4.5\pgflinewidth]
  (2.58,0.16)--(2.36,-0.28);

\draw[black, line width=1.2pt, line cap=round,
  dash pattern=on 0pt off 4\pgflinewidth]
  (2.61,-2.61)--(2.35,-2.35);

\end{tikzpicture}

\caption{Example of combinations of two end configurations in which one produces a worse cycle of type II on the left and the other produces a bad edge on the right.  }
\end{figure}    

\begin{lemma}
An end configuration that produces a worse cycle of type I cannot be combined with one that produces a worse cycle of type II.
\end{lemma}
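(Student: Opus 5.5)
We follow the proof of the preceding lemma (bad edge versus worse cycle of type II) and cut $G$ along the shared worse-cycle structure into three pieces. Suppose, for a contradiction, that the two configurations combine. Without loss of generality the type II worse cycle $xywz$ is produced on the side of $p_1$, so $x,y\in \int(G)$, $|L(x)|=|L(y)|=4$, and $v_x,v_y\in S(xywz)$. The type I worse cycle $x'y'wz$ is produced on the side of $p_k$ and shares the edge $wz$. Since $w$ and $z$ are in $X_{G,L}$, each has degree at least $3$. Planarity and the absence of separating $4$-cycles (Lemma~\ref{lem:sep45}) force the cyclic order around $w$ and $z$ to place $x,y$ on one side of $wz$ and $x',y'$ on the other. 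In particular $N(w)\supseteq\{x,z,x'\}$ and $N(z)\supseteq\{y,w,y'\}$.

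The decomposition is as follows.
\begin{itemize}
\item $G_1$ has boundary $w x v_x B(G) p_1\cdots p_k B(G) v_{x'} x' w$. If $x'$ is itself a boundary vertex, we take $v_{x'}=x'$.
\item $G_2$ is bounded by the $4$-cycle $xywz$. By Lemma~\ref{lem:sep45} it has empty interior.
\item $G_3$ is bounded by the closed walk $z y v_y B(G-G_1) v_{y'} y' z$.
\end{itemize}
We peel off $S(xywz)\cup S(x'y'wz)$ as in Section~\ref{sec3} and let $L'$ be the resulting lists. We then color the pieces successively:
\begin{itemize}
\item color $G_1'$ with precolored path $P$;
\item transfer the colors of $x,w$ to $G_2'$, with precolored path $xw$; since $G_2'$ is just the $4$-cycle, coloring $y,z$ is trivial because $|L'(y)|\ge 3$ and $|L'(z)|\ge 2$;
\item transfer the colors of $z,y$ to $G_3'$, with precolored path $z^+zyy^+$ (together with $x'$, $y'$ where appropriate).
\end{itemize}
Gluing the three colorings then contradicts the choice of $G$.

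The verification for each piece goes as follows.

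\emph{Piece $G_1'$.} We check that $(G_1',P,L_1')$ is valid. Interior vertices of $X_{G,L}$ near the cut are excluded by $15$-sparsity, since $wz$ is the only $K_2$ component within the relevant distance. Any bad vertex would give a $(1,1)$-$2$-chord, which Lemma~\ref{lem:112chord} forbids. A bad edge or bad/worst cycle would give a $(2,2^+)$-$2$-chord or a feasible $(3,3^+)$-$2$-chord, which Lemmas~\ref{lem:f-chords} and~\ref{lem:332chord} forbid. The one new point compared with the previous lemma is the side containing $x'$: it now lies on $B(G_1')$, possibly with $|L'(x')|=3$ or as a boundary vertex of $G$. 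Here we must confirm that $w$, now on the boundary with its list reduced by at most one, does not create a bad cycle or worse cycle with $x'$ and a neighbor of $v_{x'}$. This follows from the end-configuration descriptions in Figures~\ref{fig:case2}, \ref{fig:case31}, \ref{fig:case4a}, and \ref{fig:case4b2}, where all neighbors of $x'$ other than $w$ and the peeled vertices have list size $4$.

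\emph{Piece $G_3'$.} We check that $(G_3',P_3,L_3')$ has no bad vertex. A bad vertex would be adjacent to two of $z^+,z,y,y^+$. By sparsity it cannot lie in $X_{G,L}$ in the interior. If it lies on the boundary, it yields a chord excluded by Lemma~\ref{lem:chords}, or a separating $6$-cycle that contradicts Lemma~\ref{lem:sep6} via sparsity.

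\textbf{Main obstacle.} The hardest step is the case in which $y'$ is a boundary vertex with $|L(y')|=3$, or $x'$ is a boundary vertex (the third configuration of Figure~\ref{fig:case31}). Then $z$, respectively $w$, is adjacent to a boundary vertex, so $P_3$ may reach length $5$ and a bad vertex becomes genuinely possible. First I would try to show that in this case the path $y v_y \cdots$ together with $y'$ produces a feasible $(3,3^+)$-$2$-chord through $z$, namely $y'zw$ or similar, contradicting Lemma~\ref{lem:332chord}. Failing that, I would shorten $P_3$ by dropping $y^+$ when $|L(y^+)|\ne 2$, so that $|P_3|\le 4$ and only list-size-$2$ neighbors of the ends of $P_3$ need to be excluded.
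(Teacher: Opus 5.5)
There is a genuine gap: your three pieces do not cover $G$. In every type~I end configuration of Section~\ref{sec3}, the vertex $y'$ lies on $B(G)$, so $v_{y'}=y'$; your walk $\cdots v_{y'}y'z$ is carried over from the type~II/type~II situation. The vertex $x'$ either lies on $B(G)$, or it is an interior list-$4$ vertex attached to a peeled vertex $v_{x'}$ that is separated from $y'$ by an arbitrary arc of $B(G)$ (the dotted segments in Figures~\ref{fig:case2}, \ref{fig:case31}, \ref{fig:case4a} and~\ref{fig:case4b2}). With your $G_1$, $G_2=xywz$, and $G_3$ bounded by $zyv_y\cdots y'z$, the second $4$-face $wx'y'z$ belongs to no piece. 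So the edge $x'y'$ is never checked: $x'$ is colored in $G_1'$, $y'$ in $G_3'$, and nothing prevents them from getting the same color. Worse, when $x'\notin B(G)$, the whole region bounded by $v_{x'}x'y'$ and the boundary arc from $y'$ back to $v_{x'}$ is omitted, so its unpeeled boundary vertices and interior vertices are never colored.

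Your parenthetical ``together with $x'$, $y'$ where appropriate'' does not repair this. No earlier piece colors $y'$, so there is nothing to transfer. Your ``main obstacle'' is therefore the general type~I situation, not a special case, and your remedies do not reach it. The path $y'zw$ is not a chord of $G$ ($w$ is interior), so Lemma~\ref{lem:332chord} says nothing about it. Also, $y^+$ is only included in $P_3$ when $|L(y^+)|=2$ to begin with.

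The missing idea is to take the middle piece to be the whole hexagon $xwx'y'zy$ (both empty $4$-faces) with precolored path $xwx'$. After precoloring, $y,z,y'$ form a path whose lists still have size at least $2$, so $y'$ gets a color compatible with both $x'$ and $z$. Then color $G_3$ (bounded by $v_yyzy'$ and the boundary arc from $y'$ to $v_y$) with precolored path $y^+yzy'y'^+$. A bad vertex there would have to be a list-$4$ vertex adjacent to $y^+,z,y'^+$, giving a $(2,2)$-$2$-chord $y^+vy'^+$, which Lemma~\ref{lem:f-chords} forbids. Finally, color a fourth piece $G_4$, bounded by $v_{x'}x'y'$ and the remaining boundary arc, with precolored path $x'^+x'y'y'^+$; this piece disappears when $x'\in B(G)$. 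This is how the paper's proof closes the argument.
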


\begin{proof}
Suppose for the sake of contradiction that they can be combined. Without loss of generality, assume that the worse cycle of type II $xywz$ is produced on the side of $p_1$ and the worse cycle of type I $x'y'wz$ is produced on the side of $p_k$. Let $G_1$ be the subgraph with boundary $B(G_1)=wxv_x B(G) p_1 \cdots p_k B(G) v_{x'} x' w$. Let $G_2$ be the subgraph with boundary $B(G_2)=xwx'y'zyx$. Let $G_3$ be the subgraph with boundary $B(G_3)=v_yyzy'B(G-G_1)v_y$. Let $G_4$ be the subgraph with boundary $B(G_4)=v_{x'}x'y' B(G-G_1-G_3) v_{x'}$. (Note that if $x' \in B(G)$, then $G_4$ does not exist). Now color and then peel off $S(xywz)$ and $S(x'y'wz)$ as described in the previous section. Let $L'$ be the resulting list given in the previous section. Let $G'_i = G_i-S(xywz)-S(x'y'wz)$ and $L'_i$ be $L'$ restricted to $G_i'$ for $i=1,2,3,4$. By the definition of $X_{G,L}$, $(G'_1,P,L'_1)$ has no bad edge and no bad, worse, or worst cycle. If it has a bad vertex, then $G$ would have a $(1,1)$-$2$-chord, contradicting Lemma \ref{lem:112chord}. Thus, it has a proper $L'_1$-coloring $\phi_1$. Let $L''_2(u) = \{\phi_1(u)\}$ if $u=x,w,x'$ and $L''_2(u) = L'_2(u)$ otherwise. We observe that $(G'_2, P_2, L''_2)$ is a valid target with $P_2=xwx'$ and thus has a proper $L''_2$-coloring $\phi_2$. Let $L''_3(u) = \{\phi_2(u)\}$ if $u=y,z,y'$ and $L''_3(u)=L'_3(u)$ otherwise. Let $P_3=y^+yzy'y'^+$. By the definition of $X_{G,L}$, $(G'_3, P_3, L''_3)$ has no bad edge, no bad, worse or worst cycle. If it has a bad vertex $v$, then we must have $|L''_3(v)|=4$ and $N(v) \cap P_3=\{y^+,z,y'^+\}$ where $|L(y^+)|=|L(y'^+)|=2$. Then $y^+vy'^+$ is a $(2,2)$-$2$-chord, contradicting Lemma \ref{lem:f-chords}. Thus, $(G'_3, P_3, L''_3)$ is a valid target and has a proper $L''_3$-coloring $\phi_3$. Let $L''_4(u) = \{\phi_2(u)\}$ if $u=x',y'$ and $L''_4(u)=L'_4(u)$ otherwise. By the definition of $X_{G,L}$, $(G'_4, P_4, L''_4)$ is a valid target with  $P_4=x'^+x'y'y'^+$ and thus has a proper $L''_4$-coloring $\phi_4$.Then $\phi_1 \cup \phi_2 \cup \phi_3 \cup \phi_4$ gives a proper $L'$-coloring of $G-P-S(xyzw)-S(x'y'zw)$, a contradiction.   
\end{proof}

\begin{figure}[!htbp]
\centering

\begin{tikzpicture}
\node[invisnode] at (0,1.5) (a) {};
\node[bdot,label={left:$p_1$}] at (0,1) (p1) {};
\node[triangle,label={left:$v_1$}] at (0,0) (v1) {};
\node[wdot,label={left:$v_2$}] at (0,-1) (v2) {};
\node[triangle,label={left:$v_3$}] at (0,-2) (v3) {};
\node[wdot,label={left:$v_4$}] at (0,-3) (v4) {};
\node[invisnode] at (0,-3.5) (b) {};

\node[square,label={above:$x$}] at (1,-1) (x) {};
\node[square,label={below:$y$}] at (1,-2) (y) {};
\node[triangle,label={135:$w$}] at (2,-1) (w) {};
\node[triangle,label={below:$z$}] at (2,-2) (z) {};

\draw[black] (a)--(p1)--(v1)--(v2)--(v3)--(v4)--(b);
\draw[black] (v1)--(x)--(y)--(v3);
\draw[black] (x)--(w);
\draw[black] (z)--(y);

\draw[black] (p1)--(0.3,0.4);
\draw[black] (x)--(0.7,-0.4);
\draw[black] (y)--(0.75,-2.25);
\draw[black] (v4)--(0.25,-2.75);

\draw[black, line width=1.2pt, line cap=round,
  dash pattern=on 0pt off 4.5\pgflinewidth]
  (0.42,0.16)--(0.64,-0.28);

\draw[black, line width=1.2pt, line cap=round,
  dash pattern=on 0pt off 4\pgflinewidth]
  (0.34,-2.66)--(0.7,-2.3);

\node[bdot,label={right:$p_k$}] at (3,2) (p1) {};
\node[triangle,label={right:$v'_1$}] at (3,1) (v1) {};
\node[wdot,label={right:$v'_2$}] at (3,0) (v2) {};
\node[triangle,label={right:$v'_3$}] at (3,-1) (v3) {};
\node[wdot,label={right:$v'_4$}] at (3,-2) (v4) {};
\node[triangle,label={right:$y'$}] at (3,-3.25) (y) {};

\node[invisnode] at (3,-2.4) (b) {};
\node[invisnode] at (3,-2.8) (c) {};
\node[invisnode] at (3,-3.75) (d) {};

\node[square,label={left:$x'$}] at (2,0) (x) {};
\node[triangle] at (2,-1) (w) {};
\node[triangle,label={below:$z$}] at (2,-2) (z) {};

\draw[black] (p1)--(v1)--(v2)--(v3)--(v4)--(b);
\draw[black] (c)--(y)--(d);
\draw[black] (v3)--(x)--(y);
\draw[black] (x)--(w);
\draw[black] (z)--(y);
\draw[black] (z)--(w);

\draw[black] (p1)--(2.7,1.4);
\draw[black] (x)--(2.3,0.6);

\draw[black, line width=1.2pt, line cap=round,
  dash pattern=on 0pt off 4.5\pgflinewidth]
  (2.58,1.16)--(2.36,0.72);

\draw[black, line width=1.2pt, line cap=round,
  dash pattern=on 0pt off 4\pgflinewidth]
  (3,-2.41)--(3,-2.79);

\end{tikzpicture}

\caption{Example of combinations of two end configurations in which one produces a worse cycle of type II on the left and the other produces a worse cycle of type I on the right.  }
\end{figure}

\begin{lemma}
Two end configurations that both produce a worse cycle of type II cannot be combined.
\end{lemma}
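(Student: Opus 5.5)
Suppose for contradiction that two end configurations, both producing worse cycles of type II, can be combined. Without loss of generality the configuration on the side of $p_1$ produces the worse cycle $xywz$ and the one on the side of $p_k$ produces $x'y'wz$, sharing the interior edge $wz$. Here $x,y,x',y'$ are interior vertices of $G$ with list size $4$, adjacent to the peeled vertices $v_x,v_y$ and $v_{x'},v_{y'}$ respectively. The plan mirrors the previous two lemmas: cut $G$ along the cycles through $wz$ into pieces, color the piece containing $P$ first, then extend inward piece by piece, each time using a precolored path of length at most five.

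\textbf{Decomposition.} We define the following subgraphs:
\begin{itemize}
\item $G_1$, with boundary $wxv_xB(G)p_1\cdots p_kB(G)v_{x'}x'w$;
\item $G_2$, the region bounded by the $6$-cycle $xwx'y'zyx$;
\item $G_3$, with boundary $v_yyzy'v_{y'}B(G)v_y$ (the far side);
\item the two side regions $G_4$, bounded by $v_xxyv_yB(G)v_x$, and $G_5$, bounded by $v_{x'}x'y'v_{y'}B(G)v_{x'}$.
\end{itemize}
By planarity and the absence of separating $4$-cycles, the faces $xyzw$ and $x'y'zw$ are empty. Since $x,w,x'$ are distinct and $w,x,x'$ form a path, $G_2$ is just those two $4$-faces. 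We color and peel off $S(xywz)\cup S(x'y'wz)$ as in Section~\ref{sec3} to get the lists $L'$, and restrict to $G_i'=G_i-S(xywz)-S(x'y'wz)$.

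\textbf{Coloring in order.} First, $(G_1',P,L_1')$ is a valid target. By the $15$-sparse distance condition, the only $X_{G,L}$ edge near the boundary is $wz$, and $z\notin G_1$, so there are no bad edges or cycles. A bad vertex would yield a $(1,1)$-$2$-chord, which is excluded by Lemma~\ref{lem:112chord}. This gives $\phi_1$. Next, precolor $x,w,x'$ by $\phi_1$ and color $G_2$: only $y,z,y'$ remain. We need $z$ to avoid $\phi_1(w)$ and $y,y'$ to avoid the colors of $x,x'$, with $|L'(y)|\ge 3$ after peeling and $|L'(z)|=3$, so greedy works. Then color $G_4'$ with precolored path $v_x^{\pm}xyv_y^{\pm}$ and $G_5'$ similarly; these have at most $4$ precolored vertices and lists unchanged from $G$ apart from the peeled neighbors. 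Finally, color $G_3'$ with precolored path $y^+yzy'y'^+$, where the $^+$ vertices are included only when they have list size $2$. Bad edges and cycles in $G_3'$ are again excluded by sparsity, since $w\notin G_3$.

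\textbf{Main obstacle.} The key difficulty is excluding a bad vertex in $(G_3',P_3,L_3'')$ when $|P_3|=5$. A vertex $v$ with $|L(v)|=3$ adjacent to two of $y^+,y,z,y',y'^+$ cannot be adjacent to $z$ by sparsity. It also cannot be adjacent to $y$ and $y'$, since $yvy'z$ would then be a separating $4$-cycle. Being adjacent to $y^+$ and $y'^+$ would give a $(2,2)$-$2$-chord, and a boundary $v$ would give a chord excluded by Lemma~\ref{lem:chords}. The remaining case is $v$ adjacent to $y$ and $y'^+$, say. I would handle it by recomputing $P_3$ and cutting along $yv$ as in Lemma~\ref{lem:badvertex}, or by invoking Lemma~\ref{lem:sep6} on the resulting $6$-cycle through $z$. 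If $v$ has list size $4$ and is adjacent to $y^+,z,y'^+$, this produces the $(2,2)$-$2$-chord $y^+vy'^+$, contradicting Lemma~\ref{lem:f-chords}. The union of all the colorings is then an $L$-coloring of $G-E(P)$, a contradiction.
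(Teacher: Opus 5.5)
Your decomposition into $G_1$, $G_2$, $G_3$ (with the same boundaries) matches the paper's, and so does the order of extension: $G_1'$ with $P$, then $G_2'$ with $x,w,x'$ precolored, then $G_3'$ with $P_3=y^+yzy'y'^+$. The final contradiction via the $(2,2)$-$2$-chord $y^+vy'^+$ is also the paper's. Your extra regions $G_4,G_5$ are harmless but unnecessary. The cycle formed by $v_xxyv_y$ and the at most two boundary edges from $v_y$ back to $v_x$ has length $4$ or $5$, so it is a face by Lemma~\ref{lem:sep45}. All its vertices other than $x,y$ are peeled, so the path $v_x^{\pm}xyv_y^{\pm}$ refers to deleted vertices.

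The real problem is that the step you yourself call the crux is not proved. The paper settles it by asserting that the only possible bad vertex of $(G_3',P_3,L_3'')$ has $|L_3''(v)|=4$ and $N(v)\cap P_3=\{y^+,z,y'^+\}$. You instead enumerate list-size-$3$ candidates and leave the case $v\sim y$, $v\sim y'^+$ open with two unexecuted options. One of these rests on a miscount: $yzy'y'^+v$ is a $5$-cycle, so Lemma~\ref{lem:sep6} does not apply. Lemma~\ref{lem:sep45} only says this cycle bounds a face, which forces $d(z)=3$ but contradicts nothing.

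The dismissals of the other cases also fail as written:
\begin{itemize}
\item If $v\sim y$ and $v\sim y'$, the $4$-cycle $yzy'v$ cuts off a region of $G_3$ not containing $w$. Lemma~\ref{lem:sep45} only tells you that region is empty, i.e.\ the cycle is a face, not a separating cycle.
\item Sparsity controls only interior vertices of $G$ whose original list has size $3$. But $|L_3''(v)|=3$ can also arise from a vertex of $B(G)$, or from an interior vertex with $|L(v)|=4$ that lost a color to a peeled neighbor.
\item An edge from a boundary vertex $v$ to the interior vertex $y$ is not a chord, so Lemma~\ref{lem:chords} says nothing about it.
\item Since $|P_3|=5$, a vertex with $|L_3''(v)|=2$ is already bad if it has a single neighbor on $P_3$, and you never consider this case.
\end{itemize}

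To close the gap you need an actual argument that all of these configurations are impossible. Two observations help:
\begin{itemize}
\item Every vertex adjacent to $P_3$ is within distance $3$ of $z$. So $15$-sparsity excludes every interior vertex of $G$ with original list size $3$ there.
\item $y^+$ cannot be a vertex of $B(G)$ with $|L(y^+)|=2$, since then $v_yyy^+$ would be a $(2,2^+)$-$2$-chord, contradicting Lemma~\ref{lem:f-chords}; the same holds for $y'^+$.
\end{itemize}
Hence any vertex near $P_3$ with a small $L_3''$-list is either a vertex of $B(G)$ or an interior vertex with $|L|=4$ that lost colors to peeled neighbors. Each such possibility must be ruled out against the concrete end configurations of Section~\ref{sec3}, not deferred.
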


\begin{proof}
Suppose that they can be combined. Assume $xywz$ is produced on the side of $p_1$ and $x'y'wz$ is produced on the side of $p_k$. Let $G_1$ be the subgraph with boundary $B(G_1)=wxv_x B(G) p_1 \cdots p_k B(G) v_{x'} x' w$. Let $G_2$ be the subgraph with boundary $B(G_2)=xwx'y'zyx$. Let $G_3$ be the subgraph with boundary $B(G_3)=v_yyzy'v_{y'}B(G-G_1)v_y$. Now color and then peel off $S(xywz)$ and $S(x'y'wz)$ as described in the previous section. Let $L'$ be the resulting list given in the previous section. Let $G'_i = G_i-S(xywz)-S(x'y'wz)$ and $L'_i$ be $L'$ restricted to $G_i'$ for $i=1,2,3$. By the definition of $X_{G,L}$, $(G'_1,P,L'_1)$ is a valid target and thus has a proper $L'_1$-coloring $\phi_1$. Let $L''_2(u)=\{\phi_1(u)\}$ if $u=x,w,x'$ and $L''_2(u)=L'_2(u)$ otherwise. We observe that $(G'_2, P_2, L''_2)$ is a valid target with $P_2=xwx'$ and thus has a proper $L''_2$-coloring $\phi_2$. Let $L''_3(u)= \{\phi_2(u)\}$ if $u=y,z,y'$ and $L''_3(u)=L'_3(u)$ otherwise. Let $P_3=y^+yzy'y'^+$. By the definition of $X_{G,L}$, $(G'_3, P_3, L''_3)$ has no bad edge, no bad, worse or worst cycle. If it has a bad vertex $v$, then we must have $|L''_3(v)|=4$ and $N(v) \cap P_3=\{y^+,z,y'^+\}$ where $|L(y^+)|=|L(y'^+)|=2$. Then $y^+vy'^+$ is a $(2,2)$-$2$-chord, contradicting Lemma \ref{lem:f-chords}. Thus, $(G'_3, P_3, L''_3)$ is a valid target and has a proper $L''_3$-coloring $\phi_3$. Then $\phi_1 \cup \phi_2 \cup \phi_3 $ gives a proper $L'$-coloring of $G-P-S(xyzw)-S(x'y'zw)$, a contradiction.   
\end{proof}

\begin{figure}[!htbp]
\centering

\begin{tikzpicture}
\node[invisnode] at (0,1.5) (a) {};
\node[bdot,label={left:$p_1$}] at (0,1) (p1) {};
\node[triangle,label={left:$v_1$}] at (0,0) (v1) {};
\node[wdot,label={left:$v_2$}] at (0,-1) (v2) {};
\node[triangle,label={left:$v_3$}] at (0,-2) (v3) {};
\node[wdot,label={left:$v_4$}] at (0,-3) (v4) {};
\node[invisnode] at (0,-3.5) (b) {};

\node[square,label={above:$x$}] at (1,-1) (x) {};
\node[square,label={below:$y$}] at (1,-2) (y) {};
\node[triangle,label={above:$w$}] at (2,-1) (w) {};
\node[triangle,label={below:$z$}] at (2,-2) (z) {};

\draw[black] (a)--(p1)--(v1)--(v2)--(v3)--(v4)--(b);
\draw[black] (v1)--(x)--(y)--(v3);
\draw[black] (x)--(w);
\draw[black] (z)--(y);

\draw[black] (p1)--(0.3,0.4);
\draw[black] (x)--(0.7,-0.4);
\draw[black] (y)--(0.75,-2.25);
\draw[black] (v4)--(0.25,-2.75);

\draw[black, line width=1.2pt, line cap=round,
  dash pattern=on 0pt off 4.5\pgflinewidth]
  (0.42,0.16)--(0.64,-0.28);

\draw[black, line width=1.2pt, line cap=round,
  dash pattern=on 0pt off 4\pgflinewidth]
  (0.34,-2.66)--(0.7,-2.3);

\node[invisnode] at (4,1.5) (a) {};
\node[bdot,label={right:$p_k$}] at (4,1) (p1) {};
\node[triangle,label={right:$v'_1$}] at (4,0) (v1) {};
\node[wdot,label={right:$v'_2$}] at (4,-1) (v2) {};
\node[triangle,label={right:$v'_3$}] at (4,-2) (v3) {};
\node[wdot,label={right:$v'_4$}] at (4,-3) (v4) {};
\node[invisnode] at (4,-3.5) (b) {};

\node[square,label={above:$x'$}] at (3,-1) (x) {};
\node[square,label={below:$y'$}] at (3,-2) (y) {};
\node[triangle] at (2,-1) (w) {};
\node[triangle,label={below:$z$}] at (2,-2) (z) {};

\draw[black] (a)--(p1)--(v1)--(v2)--(v3)--(v4)--(b);
\draw[black] (v2)--(x)--(y)--(v3);
\draw[black] (x)--(w);
\draw[black] (z)--(y);
\draw[black] (z)--(w);

\draw[black] (p1)--(3.7,0.4);
\draw[black] (x)--(3.3,-0.4);
\draw[black] (y)--(3.25,-2.25);
\draw[black] (v4)--(3.75,-2.75);

\draw[black, line width=1.2pt, line cap=round,
  dash pattern=on 0pt off 4.5\pgflinewidth]
  (3.58,0.16)--(3.36,-0.28);

\draw[black, line width=1.2pt, line cap=round,
  dash pattern=on 0pt off 4\pgflinewidth]
  (3.66,-2.66)--(3.3,-2.3);

\end{tikzpicture}

\caption{Example of combinations of two end configurations that both produce worse cycles of type II.  }
\end{figure}

\section{Conclusion} \label{sec5}

As our result provides some evidence to Conjecture~\ref{conj:huzhu}, in particular for some non-empty $G[X]$, it would be interesting to investigate the conjecture when $G[X]$ is some other sparse bipartite graph, such as a path of arbitrary length, an even cycle, a union of an induced matching and an independent set.

It would also be interesting to investigate whether Theorem~\ref{thm:huzhu} or results in this direction can be strengthened through the lens of \emph{weak degeneracy}, a notion recently introduced by Bernshteyn and Lee~\cite{bernshteyn_lee}.
We give a rough definition as follows.

Given a graph $G$ and a function $f: V(G) \to \mathbb{N}$, define two operations:
\begin{itemize}
    \item $\operatorname{Delete}(v)$: remove $v$ from $G$; update $f(w)$ to be $f(w)-1$ if $wv\in E(G)$.
    \item $\operatorname{DelSave}(v,u)$:  remove $v$ from $G$; update $f(w)$ to be $f(w)-1$ if $wv\in E(G)$ and $w\neq u$.
\end{itemize}
Both operations are legal if the updated $f$ remains non-negative. In particular, $\operatorname{DelSave}(v,u)$ is legal if $f(v)>f(u)$ before the operation.
A graph $G$ is \emph{weakly $f$-degenerate} if all vertices of $G$ can be removed by a sequence of legal $\operatorname{Delete}$ and $\operatorname{DelSave}$ operations.
A graph $G$ is weakly $d$-degenerate for an integer $d$ if $G$ is weakly degenerate with respect to the constant $d$ function. 
The \textit{weak degeneracy} of $G$, $\operatorname{wd}(G)$, is the minimum $d$ such that $G$ is weakly $d$-degenerate.
It has been observed that if a graph $G$ is weakly $d$-degenerate, then it follows naturally that $G$ is $(d+1)$-DP colorable, and thus $(d+1)$-choosable and $(d+1)$-colorable.

Bernshteyn, Lee and Smith-Roberge~\cite{bernshteyn_lee_roberge} proved that planar graphs are weakly $4$-degenerate, generalizing the result of Thomassen that planar graphs are $5$-choosable.
Their proof also relies on some strengthening techniques.
Thus, this motivates us to propose the following conjecture.

\begin{conjecture}
     Suppose that for a $C_3$-free planar graph  $G$, $X$ is a vertex subset such that $G[X]$ is independent. Let $f:V(G) \to \mathbb{N}$ be a function such that $f(x)=2$ for every $x\in X$ while $f(v)=3$ for every $v\in V(G)\setminus X$.
    Then $G$ is weakly $f$-degenerate.
\end{conjecture}
    
\medskip{}
{ {\bf Acknowledgment.}
This project was initiated during the Graduate Research Workshop in Combinatorics held at Iowa State University in June 2025. The workshop was supported in part by NSF DMS-2152490, Barbara Jansons Professorship, and Iowa State University. The authors thank Will Hausmann for introducing this problem during the workshop. The authors also thank Hricha Acharya, Gyaneshwar Agrahari, Isaiah Hollars, and Bernard Lidick\'{y} for early helpful discussions.}

\medskip{}
{ {\bf Declaration of AI Use.} During the preparation of this manuscript, the authors used ChatGPT for language editing, proof checking and improving the clarity and readability of the text. The authors reviewed and revised all AI-assisted content and take full responsibility for the manuscript.

\bibliography{refs}
\end{document}